\newcommand{\ol}{\overline}
\newcommand{\Ind}{\mbox{\bf I}}
\newcommand{\pp}{{\mathbb P}}
\newcommand{\mn}{{\mathfrak N}}
\newcommand{\mmm}{{\mathfrak M}}
\newcommand{\witi}{\widetilde}
\newcommand{\fracd}[2]{\frac {\displaystyle #1}{\displaystyle #2 }}
\newcommand{\zz}{{\mathbb Z}}
\newcommand{\nn}{{\mathbb N}}
\newcommand{\ee}{{\mathbb E}}
\newcommand{\rr}{{\mathbb R}}
\newcommand{\calf}{{\mathcal F}}
\newcommand{\calr}{{\mathcal R}}
\newcommand{\calp}{{\mathcal P}}
\newcommand{\cali}{{\mathcal I}}
\newcommand{\calg}{{\mathcal G}}
\newcommand{\bw}{{\bf W}}
\newcommand{\veps}{\varepsilon}
\newcommand{\beq}{\begin{eqnarray*}}
\newcommand{\feq}{\end{eqnarray*}}
\newcommand{\beqn}{\begin{eqnarray}}
\newcommand{\feqn}{\end{eqnarray}}
\newcommand{\as}{\mbox{a.s.}}
\newtheorem{theorem}{Theorem} \makeatletter
\newtheorem{definition}[theorem]{Definition}
\newtheorem{lemma}[theorem]{Lemma}
\newtheorem{assume}[theorem]{Assumption}
\newtheorem{proposition}[theorem]{Proposition}
\newtheorem{corollary}[theorem]{Corollary}
\newtheorem{remark}[theorem]{Remark}
\begin{document}
\title{A random walk on $\zz$ with  drift driven by its occupation time at zero}
\author{Iddo Ben-Ari}
\address{Department of Mathematics, University of
California Irvine, 272 MSTB, Irvine, CA 92697-3875, USA}
\email{ibenari@math.uci.edu}
\author{Mathieu Merle}
\address{Department of Mathematics,
University of British Columbia,  121-1984 Mathematics
Road, Vancouver, BC, Canada V6T 1Z2}
\email{merle@math.ubc.ca}
\author{Alexander Roitershtein}
\thanks{I.~B.-A. and A.~R. thank the probability groups respectively at UBC
and at UC Irvine for the hospitality during visits in which
part of this work was carried out. Most of the work on this paper
was carried out while A.~R. enjoyed the hospitality of the
Department of Mathematics at UBC as a post-doc.}
\address{Department of Mathematics, Iowa State University, 472 Carver Hall,
Ames, IA 50011, USA}
\email{roiterst@iastate.edu}
\date{November 27, 2007}
\subjclass[2000]{Primary 60F17, 60F20, 60F5.}
\keywords{Limit theorems, renewal theorem, regular variation,
excursions of random walks, oscillating random walks,
invariance principle, Kakutani's dichotomy}
\begin{abstract}
We consider a nearest neighbor random walk on the one-dimensional
integer lattice with drift towards the origin determined by an
asymptotically vanishing function of the number of visits to zero.
We show the existence of distinct regimes according to the rate of
decay of the drift. In particular, when the rate is sufficiently
slow, the position of the random walk, properly normalized,
converges to a symmetric exponential law. In this regime, in
contrast to the classical case, the range of the walk scales
differently from its position.
\end{abstract}
\maketitle
\section{Introduction}
\label{intro} We consider a self-interacting random walk
$X:=(X_n)_{n\ge 0}$ on $\zz$ whose drift is a function of the number
of times it has already visited the origin. The random variable
$X_n$ represents the position of the walker at time $n\in\zz_+.$ We
assume that $|X_{n+1}-X_n|=1$ for all $n\ge 0$, that is $X$ is a
nearest neighbor model. Let $\eta_0$ be a positive integer and, for
$n\ge 1$, let
\begin{equation}
\label{etan}
\eta_n=\eta_0+\#\{i\in(0,n]:X_i=0\}.
\end{equation}
Thus, $\eta_n -\eta_0$ describes the number of visits of the walker to the origin by time $n$.
Let $\veps := (\veps_n)_{n \geq1}$ be a  sequence taking values in $[0,1)$.
For $x\in \zz$ and $l\in \nn$, let  $P_{(x,l)}^{\veps}$ denote
a measure on the nearest neighbor random walk paths defined as follows:
\begin{align}
\nonumber
&P_{(x,l)}^{\veps}(X_0=x,\eta_0=l)=1
\\
\label{eq:din}
&P_{(x,l)}^{\veps}(X_{n+1}=j|X_n = i, \eta_n=m) =
\left\{
\begin{array}{ll}
\frac 12&\mbox{if}~i=0~\mbox{and}~|j|=1
\\
\frac12 \bigl(1-\mbox{sign}(i)\veps_m\bigr)&\mbox{if}~i\ne 0~\mbox{and}~j-i=1.
\\
\frac12 \bigl(1+\mbox{sign}(i)\veps_m\bigr)&\mbox{if}~i\ne 0~\mbox{and}~j-i=-1.
\end{array}
\right.
\end{align}
Here $\mbox{sign}(x)$ is $-1,0,$ or $1$ according to whether $x$ is
a negative, zero, or, respectively, positive. The corresponding
expectation is denoted by $E_{(x,l)}^{\veps}.$
\par
To simplify the notation, we usually denote $P_{(0,1)}^{\veps}$ by
$P$ and $E_{(0,1)}^{\veps}$ by $E.$ If $\veps_n=0$ for all $n\geq 1,$ we
denote $P$ by $\pp,$ $E$ by $\ee,$ and refer to $X$ as the {\em
simple random walk on $\zz.$}
\par
We note that, unless $\veps$ is a constant sequence, $X$ is not a Markov chain.
However, the pairs $(X_n,\eta_n)_{n\ge 0}$ form a time-homogeneous Markov chain.
\par
Let $d_n=-\mbox{sign}(X_n)\veps_{\eta_n},$ and let
$\calf_n=\sigma(X_0,X_1,\dots,X_n)$ denote the $\sigma$-algebra
generated by the random walk paths up to time $n$. Then \beqn
\label{din_dyn} E \bigl( X_{n+1}-X_n | \calf_n \bigr) = d_n. \feqn
That is $d_n$ is the local drift of the random walk at time $n.$
Note that the drift is always toward the origin.
\par
The aim of this paper is to prove limit theorems for the model
described above in the case when $\lim_{n\to\infty}\veps_n=0$. If
the convergence is fast enough, the asymptotic behavior of $X$ is
similar to that of the simple random walk. In Theorem \ref{th:main}
we show that the functional central limit theorem holds when $n
\veps_n \to 0$ and that $P$ and $\pp$ are mutually absolutely
continuous if and only if $\sum_{n=1}^{\infty} \veps_n <\infty$. We
refer to this regime as supercritical. On the other hand, when
$\veps_n$ converges to $0$ slowly, the process exhibits a different
limiting behavior. This case is treated in Theorems
\ref{walk}--\ref{supt}. In particular, we show that when $\veps$ is
a regularly varying sequence converging to $0$ and satisfying $n
\veps_n \to\infty$, the position of the walk $X_n,$ properly
normalized, converges in distribution to a symmetric exponential
random variable. In this case, in contrast to the simple random
walk, the range of the walk up to time $n$ scales differently from
$X_n$. We call this regime subcritical. The critical regime, which
essentially corresponds to sequences satisfying $c_1 \le n \veps_n
\le c_2$ for some $0<c_1\le c_2 <\infty,$ is subject of future work.
\par
The above definition of the random walk was inspired by a branching
tree model arising in \cite{iperc} in context of the study of the
invasion-percolation on a regular tree. The scaling limit of this
branching tree is further studied in \cite{newp} and is closely
related to the critical regime of our model.
\par
It is well-known that there is a one-to-one correspondence between
discrete random trees and certain random walk paths (cf. \cite{LeGall}). 
Our model would correspond to a discrete random
binary tree with an infinite rightmost branch, a backbone, from
which emerge off-backbone trees. Such an off-backbone tree has a
single vertex at its first generation, from which emerges a
subcritical Galton-Watson tree, however the branching law in this
off-backbone tree depends on the height at which it branches off the
backbone.
\par
 Another related class
of random processes are oscillating random walks, namely
time-homogeneous Markov chains in $\rr^d$ with transition function
which depends on the position of the chain with respect to a fixed
hyperplane, cf. \cite{kemperman,borovkov}.
\par
We remark that the model can be interpreted as describing a gambler
(Sisyphus) who learns from his experience and adopts a new strategy
whenever a ruin event occurs. This paper intends to be a first step
towards a more general study of random walks in $\mathbb{Z}^d$ for
which the transition probabilities are updated each time the walk
visits a certain set. Another possible extension would be to
consider a random environment version of the random walk~$X.$
\par
The paper is organized as follows. The main results are collected in
Section~\ref{sec:results}. Some general facts about random walks and
regular varying sequence are recalled in Section~\ref{sec:prel}.
The proofs are contained in Section~\ref{sec:super} (supercritical
case) and Section~\ref{sec:sub} (subcritical case).
\section{Statement of main results}
\label{sec:results} This section presents the main results of this
paper. It is divided into two parts. The first is devoted to the
supercritical case while the second one covers the results for the
subcritical regime. Throughout the paper, unless it is explicitly
stated otherwise, we assume that the drift sequence $\veps$ is given
and consider the random walk $X$ under the measure $P$ defined
above.
\subsection{Supercritical Regime}
Let $C(\rr_+,\rr)$ be the space of continuous functions from $\rr_+$ into $\rr,$
equipped with the topology of uniform convergence on compact sets.
For a sequence of random variables $Z:=(Z_n)_{n\ge0}$ and each $n\ge 0$,
let $\cali^{Z}_n \in C(\rr_+,\rr)$ denote the following linear interpolation of $Z_{[nt]}$:
\begin{equation}
\label{esn} \cali^Z_n(t) = \fracd{1}{\sqrt{n}}\bigl(([nt]+1-nt)
Z_{[nt]} + (nt-[nt]) Z_{[nt]+1}\bigr).
\end{equation}
Here and henceforth $[x]$ denotes the integer part of a real number $x.$
\par
We say that $Z$ satisfies the invariance principle, if the sequence of processes
$\bigl(\cali_n^Z(t)\bigr)_{t \in \rr_+}$ converges weakly in
$C(\rr_+;\rr),$ as $n\to\infty,$ to the standard Brownian motion. We have:
\begin{theorem}~
\label{th:main}
\begin{enumerate}
\item  Assume that $\lim_{n\to\infty} n \veps_n =0$.  Then $X$ satisfies the invariance principle.
\item  The distribution of $X$ under $P$ is either equivalent or orthogonal to
the law $\pp$ of the simple random walk, according to whether $\sum_{n=1}^{\infty} \veps_n$ is finite or not.
\end{enumerate}
\end{theorem}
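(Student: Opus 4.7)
The plan is to couple $X$ (law $P$) with a simple random walk $Y$ (law $\pp$) using common randomness, and to show that the paths agree outside a sparse ``intervention'' set whose cardinality is $o(\sqrt{n})$ under $n\veps_n\to 0$. Concretely, with $(\pi_k,U_k)_{k\ge 0}$ i.i.d.\ ($\pi_k$ uniform on $\{\pm 1\}$, $U_k$ uniform on $[0,1]$), set $Y_n:=\sum_{k<n}\pi_k$ and define $X$ by $X_0=0$ and
\[
X_{k+1}-X_k = \begin{cases} -\mbox{sign}(X_k) & \text{if } X_k\ne 0 \text{ and } U_k\le \veps_{\eta_k},\\ \pi_k & \text{otherwise.} \end{cases}
\]
Then $X$ has law $P$, and the two walks take the same step except when $X_k\ne 0$, $U_k\le\veps_{\eta_k}$, and $\pi_k=\mbox{sign}(X_k)$, in which case they differ by $\pm 2$. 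Hence $\sup_{k\le n}|X_k-Y_k|\le 2R_n$ with $R_n:=\sum_{k<n}\Ind_{\{X_k\ne 0,\,U_k\le\veps_{\eta_k}\}}$, and it suffices to show $R_n/\sqrt{n}\to 0$ in $P$-probability; combined with Donsker's theorem for $Y$, Slutsky's lemma in $C(\rr_+,\rr)$ then yields the invariance principle for $X$.

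\textbf{Bounding $\ee R_n$.} Conditioning on $\calf_k$, $\ee R_n = \ee\sum_{k<n}\veps_{\eta_k}\Ind_{X_k\ne 0}\le \ee\sum_{j\leq \eta_n}\veps_j\tau_j$, where $\tau_j$ is the length of the $j$-th excursion. Under $P$, each $\tau_j$ is the return time of a nearest-neighbor walk biased by $\veps_j$ toward $0$, so $\ee_P\tau_j = 1 + 1/\veps_j$ and $\veps_j\ee_P\tau_j\le 2$. Since $\eta_n$ is a stopping time for the excursion filtration $\calg_J:=\sigma(\tau_1,\ldots,\tau_J)$ (because $\{\eta_n\ge J+1\}=\{T_J\le n\}\in\calg_J$, with $T_J:=\sum_{j\le J}\tau_j$), Wald's identity for independent non-identically distributed summands yields $\ee R_n\le 2\ee\eta_n$. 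The remaining step is $\ee\eta_n = o(\sqrt{n})$: applying Wald to $\sum_j\mu_j\pp(\eta_n\ge j)=\ee T_{\eta_n}\le n+\ee\tau_{\eta_n}$ (with $\mu_j:=1+1/\veps_j$ monotone) and bounding the left-hand side below by $\mu_{\ee\eta_n/2}\cdot\ee\eta_n/2$ yields the implicit inequality $\ee\eta_n\cdot\mu_{\ee\eta_n/2}\lesssim n$; since $1/\veps_n=\omega(n)$ when $n\veps_n\to 0$, this forces $\ee\eta_n = o(\sqrt{n})$. The main technical obstacle I foresee is making the Wald-type manipulations rigorous in the non-i.i.d.\ setting and controlling the overshoot term $\ee\tau_{\eta_n}$.

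\textbf{Part (ii).} For the Kakutani dichotomy, I would compute $dP/d\pp$ explicitly by excursion and apply Kakutani's theorem to its independent factors. Within the $j$-th excursion of length $\tau_j$, the first step (from $0$) contributes a likelihood ratio of $1$; of the remaining $\tau_j-1$ steps, $\tau_j/2$ are toward $0$ (each with ratio $1+\veps_j$) and $\tau_j/2-1$ are away (each with ratio $1-\veps_j$), yielding the excursion factor $(1-\veps_j^2)^{\tau_j/2}/(1-\veps_j)$. Multiplying over $J$ returns,
\[
M_{T_J}:=\left.\frac{dP}{d\pp}\right|_{\calf_{T_J}} = \prod_{j=1}^J\frac{(1-\veps_j^2)^{\tau_j/2}}{1-\veps_j}.
\]
Under $\pp$ the $\tau_j$ are i.i.d.\ with generating function $\ee_\pp[s^{\tau/2}]=1-\sqrt{1-s}$ on $[0,1]$, so $\ee_\pp[(1-\veps_j^2)^{\tau_j/2}]=1-\veps_j$ and each factor $\xi_j:=(1-\veps_j^2)^{\tau_j/2}/(1-\veps_j)$ has mean one. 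Since $(\xi_j)$ is independent, Kakutani's theorem applies: $P$ is equivalent to $\pp$ iff $\sum_j(1-\ee_\pp\sqrt{\xi_j})<\infty$, and $P$ is orthogonal to $\pp$ otherwise. Evaluating $\ee_\pp\sqrt{\xi_j}$ via the same generating function gives $\ee_\pp\sqrt{\xi_j} = 1 - \tfrac{\sqrt{2}-1}{2}\veps_j + O(\veps_j^2)$, so the Kakutani sum has the same nature as $\sum_j\veps_j$, establishing the dichotomy.
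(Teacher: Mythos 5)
Your part (ii) is correct and is essentially the paper's argument: factor the Radon--Nikodym derivative over excursions, check via the generating function of $\tau_j$ that each factor has $\pp$-mean one, and apply Kakutani's dichotomy, with $\ee_\pp\sqrt{\xi_j}=1-\tfrac{\sqrt2-1}{2}\veps_j+O(\veps_j^2)$ giving the criterion $\sum_j\veps_j<\infty$ (the paper additionally records the small point that the excursion $\sigma$-field $\calg$ coincides with $\calf$ up to null sets because $T_n\to\infty$ a.s.\ under both laws).

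Part (i) has a genuine gap. The coupling reduction is fine: your $X$ has law $P$, $\sup_{k\le n}|X_k-Y_k|\le 2R_n$, and $R_n/\sqrt n\to0$ in probability would indeed give the invariance principle via Donsker for $Y$. But your route to that estimate fails, because you try to prove it in expectation. First, the claimed bound $\ee\,\eta_n=o(\sqrt n)$ is false: the drift always points toward the origin, so (by exactly the monotone coupling of the paper's Lemma 3.2, extended to compare with the zero-drift walk) every visit of the simple random walk to $0$ is also a visit of $X$ to $0$; hence $\eta_n$ under $P$ stochastically dominates $\eta_n$ under $\pp$, and $\ee\,\eta_n\ge\ee_\pp\,\eta_n\sim\sqrt{2n/\pi}$. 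Consequently your Wald bound $\ee R_n\le 2\ee\,\eta_n$ only gives $O(\sqrt n)$, not $o(\sqrt n)$. The implicit-inequality argument you sketch cannot rescue this: the overshoot term $\ee\,\tau_{\eta_n}$ is not an error term but dominates $n$ (the straddling excursion is size-biased and $\ee\,\tau_j=1+\veps_j^{-1}\gg j$ when $n\veps_n\to0$), and $\mu_j$ need not be monotone since $\veps$ is not assumed monotone. More fundamentally, expectations are the wrong tool here: $\ee(\veps_j\tau_j)=1+\veps_j$, so $\ee\sum_{j\le\eta_n}\veps_j\tau_j$ is itself of order $\sqrt n$; the quantity $\sum_{j\le\eta_n}\veps_j\tau_j$ is $o(\sqrt n)$ only in probability, its mean being carried by rare long excursions. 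This in-probability smallness is precisely the paper's key estimate (its Proposition 4.1 plus display (4.4)): one shows $\frac1N\sum_{j\le N}\veps_j\tau_j\to0$ in probability and separately that $\eta_n=O_P(\sqrt n)$ via the stable limit of $T_N/N^2$. If you replace your expectation bound by this two-step in-probability argument (and then use conditional Markov for $R_n$ given the path), your coupling proof closes and becomes a clean variant of the paper's martingale-decomposition proof, which controls the same drift sum $\sum_{k\le\eta_n}\veps_k\tau_k$.
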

For the sake of comparison with the subcritical regime, we now state some
consequences of this result. Let
\begin{equation}
\label{maxima} M_n=\max_{i\leq n} X_i.
\end{equation}
We have:
\begin{corollary}~
\label{mainc}
\begin{enumerate}
\item
Assume that $\lim_{n\to\infty} n \veps_n =0$. Then $M_n/\sqrt{n}$
(respectively $|M_n|/\sqrt{n}$) converge in distribution, as
$n\to\infty,$ to $\sup_{0\leq t\leq 1} B_t$ (respectively to
$\sup_{0\leq t\leq 1} |B_t|$), where $B_t$ is the standard Brownian
motion.
\item
Assume that $\sum_{n=1}^{\infty} \veps_n <\infty$.
Then $\displaystyle \limsup_{n\to\infty} \frac{X_n}{\sqrt{2n\log\log n}}=1,$ $P$-\mbox{\rm a.s.}
\end{enumerate}
\end{corollary}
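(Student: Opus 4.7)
The plan is to derive both statements directly from Theorem~\ref{th:main}, with no additional input beyond the continuous mapping theorem and the classical law of the iterated logarithm for the simple random walk.

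For part~(i), I would apply the continuous mapping theorem to the weak convergence $\cali_n^X \Rightarrow B$ provided by Theorem~\ref{th:main}(i). The functionals $\Phi(f) = \sup_{t\in[0,1]} f(t)$ and $\Psi(f) = \sup_{t\in[0,1]} |f(t)|$ on $C(\rr_+,\rr)$ depend only on $f|_{[0,1]}$ and are $1$-Lipschitz with respect to the uniform norm on $[0,1]$, hence continuous in the uniform-on-compacts topology. Because $\cali_n^X$ is piecewise linear with $\cali_n^X(i/n) = X_i/\sqrt{n}$, its extrema on $[0,1]$ are attained at the interpolation nodes, so $\Phi(\cali_n^X) = M_n/\sqrt{n}$ and $\Psi(\cali_n^X) = \max_{i\le n}|X_i|/\sqrt{n}$. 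Continuous mapping then yields the two stated limits, reading $|M_n|$ in the corollary as $\max_{i\le n}|X_i|$ (since $X_0=0$ forces $M_n\ge 0$, the literal $|M_n|$ coincides with $M_n$).

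For part~(ii), the Hartman-Wintner law of the iterated logarithm for the simple random walk states that
$$\limsup_{n\to\infty} \frac{X_n}{\sqrt{2n\log\log n}} = 1, \quad \pp\text{-a.s.}$$
By Theorem~\ref{th:main}(ii), the hypothesis $\sum_n \veps_n<\infty$ makes $P$ and $\pp$ mutually absolutely continuous, so this $\pp$-a.s.\ identity transfers verbatim to a $P$-a.s.\ identity on the tail event in question.

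Neither part poses a real obstacle, as the corollary is an immediate repackaging of Theorem~\ref{th:main}. The only step requiring a line of care is the identification $\Phi(\cali_n^X) = M_n/\sqrt{n}$, which rests on the elementary fact that a linear interpolation between two real values attains its extrema at an endpoint.
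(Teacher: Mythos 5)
Your proof is correct and is exactly the argument the paper intends: the corollary is stated without proof as an immediate consequence of Theorem~\ref{th:main}, with part (i) following from the invariance principle via the continuous mapping theorem (noting that the piecewise-linear interpolation attains its extrema on $[0,1]$ at the nodes, so $\sup_{t\le 1}\cali^X_n(t)=M_n/\sqrt n$ and $|M_n|$ is to be read as $\max_{i\le n}|X_i|$), and part (ii) from transferring the Hartman--Wintner law of the iterated logarithm for the simple random walk through the equivalence of $P$ and $\pp$ given in Theorem~\ref{th:main}-(ii). Nothing is missing.
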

This corollary extends to our model the limit theorem for the maxima and the law of the
iterated logarithm of the simple random walk.
\subsection{Subcritical Regime}
\label{subs1}
First, we recall the definition of regularly varying sequences (see for example  \cite{rvs} or
Section 1.9 of \cite{rvbook}).
\begin{definition}
\label{defrv}
Let $r:=(r_n)_{n\ge 1}$ be a sequence of positive reals.
We say that $r$ is {\em regularly varying} with index $\rho\in\rr$, if   $r_n=n^{\rho} \ell_n$, where
$\ell := (\ell_n)_{n\ge 1}$ is such that for any $\lambda>0$,
$\lim_{n\to\infty} \ell_{[\lambda n]}/\ell_n=1$. \\
The set of regularly varying sequences with index $\rho$ is denoted by $\mbox{RV}(\rho)$.
If $r\in  \mbox{RV}(0)$, we say that $r$ is {\em slowly varying}.
\end{definition}
In this section we make the following assumption:
\begin{assume}
[{\bf subcritical regime}]
\label{maina}~\\
Assume that $\veps \in \mbox{RV}(-\alpha)$ for some $\alpha \in [0,1]$. Moreover,
\begin{itemize}
\item
if $\alpha =0$, assume in addition that $\lim_{n\to\infty} \veps_n = 0;$
\item
if $\alpha =1,$ assume in addition that $\lim_{n\to\infty} n\veps_n/ \log n = \infty$.
\end{itemize}
\end{assume}
To state our results for this regime, we need to introduce some
additional notations. We say that two sequences of real numbers
$(x_n)_{n\ge 1}$ and $(y_n)_{n\ge 1}$ are asymptotically equivalent
and write $x_n \sim y_n$ if $\lim_{n\to\infty} x_n / y_n=1$. Let
\beqn \label{tin} T_0=0\quad \mbox{and}\quad T_{n+1}=\inf\{k>
T_n:X_k=0\},~n\ge 0.\feqn That is, $T_n$ is the time of the $n$-th
return to $0$. Let \beqn \label{aen} a_n=n+\sum_{i=1}^n
\fracd{1}{\veps_i},\qquad c_n=\min\{i\in\nn:a_i \geq n\},\quad
\mbox{~and~}\quad b_n=\frac{1}{\veps_{c_{n}}}. \feqn
Lemma~\ref{lem:timoments} below shows that $a_n=E (T_n).$ The
sequence $(c_n)_{n\ge 1}$ is an inverse of $(a_n)_{n\ge 1},$ and, by
a renewal theorem of Smith \cite{smith}, $c_n \sim E (\eta_n)$.
Therefore, $b_n$ can be understood as a typical lifetime of the last
excursion from the origin completed before time $n$. The sequences
$(a_n)_{n\ge 1},(b_n)_{n\ge 1},$ and $(c_n)_{n\ge 1}$ are regularly
varying, and their asymptotic behavior, as $n\to\infty,$ can be
deduced from the standard results collected in Theorem~\ref{th:rv}
(see Corollary~\ref{sequen}). For the distinguished case
$\veps_n=n^{-\alpha}$ with $\alpha\in(0,1),$ we have  $a_n \sim
(1+\alpha)^{-1}n^{1+\alpha},$ $c_n\sim
(1+\alpha)^{\frac{1}{1+\alpha}}n^{\frac{1}{1+\alpha}},$ and hence
$b_n
\sim(1+\alpha)^{\frac{\alpha}{1+\alpha}}n^{\frac{\alpha}{1+\alpha}}.$
\par
We have:
\begin{theorem}
\label{walk}
Let Assumption~\ref{maina} hold. Then, as $n\to\infty$,
$X_n/b_n$ converges in distribution to a random variable with
density $e^{-2|x|}$, $x\in (-\infty,\infty).$
\par
\end{theorem}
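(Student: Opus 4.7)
The plan is to reduce the theorem to the computation of a stationary distribution for a random walk with \emph{constant} drift toward the origin. For fixed $\veps>0$, let $\tilde X^{(\veps)}$ denote the Markov chain on $\zz$ governed by \eqref{eq:din} with $\veps_m\equiv \veps$. Detailed balance yields the explicit stationary distribution $\pi^\veps$ with $\pi^\veps(k)\propto \bigl((1-\veps)/(1+\veps)\bigr)^{|k|}$ for $|k|\ge 1$. Under the scaling $k=x/\veps$, the pushforward of $\pi^\veps$ by $k\mapsto \veps k$ converges weakly, as $\veps\to 0$, to the symmetric exponential law with density $e^{-2|x|}$. This identifies the target law as the natural scaling limit of the stationary distributions of the constant-drift family; since $b_n=1/\veps_{c_n}$, it then suffices to show that the law of $X_n$, rescaled by $\veps_{c_n}$, is close to $\pi^{\veps_{c_n}}$.

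Next I would exploit the excursion structure: during the $k$-th excursion $[T_{k-1},T_k)$ the process $X$ is literally a constant-drift walk with drift $\veps_k$ started at $0$. Smith's renewal theorem (the form cited in the paper, giving $c_n \sim \ee(\eta_n)$), combined with $\veps\in\mbox{RV}(-\alpha)$, yields $\eta_n/c_n\to 1$ in probability and hence $\veps_{\eta_n}/\veps_{c_n}\to 1$. Choose an intermediate scale $h_n$ with $b_n\ll h_n\ll n$; such an $h_n$ exists because Assumption~\ref{maina} implies $b_n=o(n)$ (a fact that follows from the asymptotic formulas for $a_n$, $b_n$, $c_n$ under regular variation). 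By the regular variation of $\veps$, the drifts $\veps_{\eta_m}$ are uniformly close to $\veps_{c_n}$ for $m\in[n-h_n,n]$, so on this window the process $X$ can be coupled with high probability to a constant-drift walk $\tilde X^{(\veps_{c_n})}$ started from $X_{n-h_n}$.

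A mixing estimate completes the argument. The chain $\tilde X^{(\veps)}$ reaches $\pi^\veps$ in total variation within $O(1/\veps)$ steps, essentially because it completes many i.i.d.\ excursions of mean length $1/\veps$ and each completed excursion restores the chain to the regenerative cycle from $0$. Applying this with $\veps=\veps_{c_n}$ and running time $h_n \gg b_n = 1/\veps_{c_n}$, the coupled walk at time $n$ is $o(1)$-close to $\pi^{\veps_{c_n}}$ in total variation; combined with the scaling limit from the first paragraph, this gives $X_n/b_n=\veps_{c_n}X_n\Rightarrow$ density $e^{-2|x|}$, which is the claim.

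The principal obstacle is the adiabatic comparison: one needs to quantitatively control both the mixing of $\tilde X^{(\veps)}$ and the accumulation of errors from the slowly changing drift across $[n-h_n,n]$. The boundary conditions in Assumption~\ref{maina} ($\veps_n\to 0$ when $\alpha=0$, $n\veps_n/\log n\to\infty$ when $\alpha=1$) appear to be precisely what is needed for the window $h_n$ to satisfy both $h_n/b_n\to\infty$ and $h_n/n\to 0$, and for the renewal estimate $\eta_n/c_n\to 1$ to be valid; verifying these endpoints rigorously is where most of the technical work is likely concentrated.
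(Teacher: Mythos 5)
Your overall strategy is the same as the paper's: compare $X$ near time $n$ with an oscillating walk of constant drift $\approx\veps_{c_n}$, use its explicit stationary law (the paper's \eqref{eq:invmeas9}, whose rescaled tail gives $\tfrac12 e^{-2x}$), a mixing estimate over a window that is short relative to $n$, and localization of $\eta_n$ around $c_n$. However, two of your quantitative steps have genuine gaps. First, the mixing time of the constant-drift walk $\tilde X^{(\veps)}$ is of order $\veps^{-2}$, not $\veps^{-1}$: the paper's Lemma~\ref{lem:convrate} gives total-variation error $2(1+\delta^2)^{-n}$, which is small only after $\gg\delta^{-2}$ steps, and this order is sharp, since started from $0$ the walk is dominated by the simple random walk, so after $t\ll\veps^{-2}$ steps it has spread only $O(\sqrt t)\ll\veps^{-1}$, while the stationary law lives on scale $\veps^{-1}$; completing one excursion of mean length $\veps^{-1}$ does not equilibrate the chain. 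Consequently your window must satisfy $h_n\gg b_n^2$ (in fact $h_n\gg b_n^2\log b_n$ to beat the polynomial prefactors), not merely $h_n\gg b_n$. Such a window still exists, but this is exactly where Corollary~\ref{sequen}-(iv) and the endpoint condition $n\veps_n/\log n\to\infty$ at $\alpha=1$ enter (the paper's $\kappa_n$ is chosen with $\kappa_n/n\to0$ and $\kappa_n\veps_{c_n}^2/\log(\veps_{c_n}^{-1})\to\infty$); your claim that the assumptions are only needed for $b_n=o(n)$ misses their real role.

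Second, the adiabatic step as you state it does not go through: over a window of length $h_n\gtrsim b_n^2=\veps_{c_n}^{-2}$, the per-step discrepancy between the true kernel (drift $\veps_{\eta_m}$) and the constant-drift kernel (drift $\veps_{c_n}$) is of order $|\veps_{\eta_m}-\veps_{c_n}|=o(\veps_{c_n})$, and summing over $h_n$ steps gives an accumulated error that need not be $o(1)$, so a ``couple with high probability'' path argument fails. The paper avoids this by the monotone coupling of Lemma~\ref{lem:monot}: on the event that $\eta$ is localized, $|X|$ is sandwiched between constant-drift walks with drifts equal to the minimum and the maximum of $\veps_m$ over the localization window, both asymptotic to $\veps_{c_n}$ by regular variation (Theorem~\ref{th:rv}-(ii)), and the two sandwiching stationary laws have the same exponential scaling limit. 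You need this stochastic-domination idea (or a genuine quantitative perturbation bound) to make your comparison rigorous. A minor further gap: Smith's theorem gives only $c_n\sim E(\eta_n)$, which does not by itself yield $\eta_n/c_n\to1$ in probability; the paper proves this via a CLT for $T_n$ and inversion (Propositions~\ref{tsuper} and~\ref{cor}), and for the lower bound it applies the localization at the earlier time $n-\kappa_n$ together with the Markov property of $(X_n,\eta_n)$.
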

Due to the symmetry of the law of $X$, the theorem is equivalent to
the statement that $|X_n|/b_n$ converges in distribution to a
rate-$2$ exponential random variable. The proof of
Theorem~\ref{walk} is based on a comparison of the distribution of
$X_n$ to a stationary distribution of an oscillating random walk
with constant drift $\veps_{c_{n}}$ toward the origin.
\par
We proceed with a more precise description of $X,$ from which
Theorem~\ref{walk} can be in fact derived in an alternative way (see
Remark~\ref{alter} below). Interestingly, the method we use to
establish these more precise results could possibly be adapted to
the non nearest neighbor case, provided one could show in this more
general setting that the number of visits to the origin is
well-localized around its typical value. In this more general case,
the method evoked in Remark~\ref{alter} would also remain valid.
\par
Let $\mn^{(c)}$ denote Ito's excursion measure associated with the
excursions of the Brownian motion with drift $c<0$ above its infimum
process, and let $\zeta$ denote the lifetime of an excursion above
the infimum (see Section~\ref{positives} below for details). Let
\beq V_n=\max\{i\leq n: X_i=0\}. \feq We have:
\begin{theorem}
\label{lastvislastexc}
Let Assumption~\ref{maina} hold. Then:
\begin{itemize}
\item [(i)] $\lim_{n \to \infty} b_{2n} P(X_{2n}=0)=2$.
\item [(ii)]
For $t>0$, $\displaystyle \lim_{n\to \infty} b_{2n}^2 P\bigl(V_{2n}=2n-2[tb_{2n}^2]\bigr)=
2\mn^{(-1)}(\zeta>2t).$
\\
In particular, $\lim_{n\to\infty}P\bigl((2n-V_{2n})/b_{2n}^2\leq x\bigr)=\int_0^{2x}
\mn^{(-1)}(\zeta>t)dt$ for all $x>0.$
\item [(iii)] For $n\in\nn,$ let $Z_n=\bigl(Z_n(t)\bigr)_{t\in \rr_+}$ be a continuous process for which
$Z_n\bigl(k\cdot b_{2n}^{-2}\bigr)=$ $|X_{(V_{2n}+k)\wedge T_1}|\cdot b_{2n}^{-1}$ whenever $k \in \zz_+,$ and which is
linearly interpolated elsewhere.
\\
Then, as $n\to\infty,$ the process $Z_n$ converges weakly in
$C(\rr_+,\rr)$ to a non-negative process with the law $\displaystyle \int_0^{\infty}\mn^{(-1)}(~\cdot~,\zeta>t)dt.$
\end{itemize}
\end{theorem}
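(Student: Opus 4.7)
All three statements flow from a single heuristic picture. In a window of size $O(b_{2n}^2)$ around time $2n$, the occupation count $\eta_k$ concentrates around $c_{2n}$, so the local drift $\veps_{\eta_k}$ is essentially frozen at $\veps_{c_{2n}}=1/b_{2n}$. Rescaling space by $b_{2n}$ and time by $b_{2n}^2$ then turns $X$ into a random walk whose drift per unit time equals $-1$ in the Brownian scale, and its excursions from $0$ converge to Brownian excursions with drift $-1$, governed by Ito's measure $\mn^{(-1)}$. As a preliminary step, the renewal structure $T_k=\sum_{j\le k}\tau_j$, where the $\tau_j$ are independent return times with $E\tau_j=1/\veps_j$ and exponential tails of rate of order $\veps_j^2$, together with the regular variation of $\veps$, yields $\eta_k/c_{2n}\to 1$ uniformly in $k$ on this window, with good control on deviations.

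\textbf{Parts (i) and (ii).} For (i), write $P(X_{2n}=0)=\sum_k P(T_k=2n)$ and compare to the analogous renewal sequence for an i.i.d.\ process with common step $\tau^{(\veps_{c_{2n}})}$, the return time of a nearest-neighbor walk on $\zz_+$ started at $1$ with constant drift $\veps_{c_{2n}}$ toward the origin. Blackwell's renewal theorem applied to that coupled process gives the target value $2\veps_{c_{2n}}=2/b_{2n}$ (the factor $2$ reflecting parity), and the preliminary concentration estimate controls the difference. For (ii), apply the strong Markov property at $V_{2n}$ and use the concentration of $\eta_{2n-2m}$ around $c_{2n}$ to decompose
\beq
P\bigl(V_{2n}=2n-2m\bigr) &\sim& P(X_{2n-2m}=0)\cdot P\bigl(\tau^{(\veps_{c_{2n}})}>2m\bigr).
\feq
By (i) and the regular variation of $b$ (which gives $b_{2n-2m}\sim b_{2n}$ as long as $m=O(b_{2n}^2)=o(n)$), the first factor is $\sim 2/b_{2n}$. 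For the second factor, the key identity
\beq
b_{2n}\,P\bigl(\tau^{(\veps_{c_{2n}})}>2[tb_{2n}^2]\bigr)&\longrightarrow& \mn^{(-1)}(\zeta>2t)
\feq
follows from a Donsker-type invariance principle applied to the walk on $\zz_+$ with constant drift $\veps_{c_{2n}}=1/b_{2n}$ started at $1$, combined with the standard characterization of $\mn^{(-1)}$ as the limit $\lim_{a\downarrow 0}a^{-1}\pp_a$ for Brownian motion with drift $-1$ absorbed at $0$ (the scaled starting point $1/b_{2n}$ plays the role of $a$). Multiplying gives $b_{2n}^2 P(V_{2n}=2n-2[tb_{2n}^2])\to 2\mn^{(-1)}(\zeta>2t)$, and the integrated density formula follows by the change of variables $u=2t$.

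\textbf{Part (iii) and main obstacle.} Conditioning further on $\eta_{V_{2n}}$, the trajectory of $X$ on $[V_{2n},T_1]$ has the law of a single excursion of a walk with drift $\veps_{\eta_{V_{2n}}+1}\approx 1/b_{2n}$, conditioned to live beyond time $2m=2n-V_{2n}$. The same invariance principle used in (ii), applied to this excursion under the $(b_{2n},b_{2n}^2)$ rescaling, yields convergence to a Brownian excursion with drift $-1$ under $\mn^{(-1)}(\,\cdot\mid\zeta>2t)$. Integrating against the limit law of $(2n-V_{2n})/b_{2n}^2$ supplied by (ii), namely the density $2\mn^{(-1)}(\zeta>2t)$ in the variable $t$, and undoing the change $u=2t$, produces the size-biased mixture $\int_0^\infty \mn^{(-1)}(\,\cdot\,,\zeta>t)\,dt$. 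The principal technical obstacle is quantitative: one must control, uniformly on a window of length $O(b_{2n}^2)$, the replacement of the drift $\veps_{\eta_k}$ by the constant $\veps_{c_{2n}}$, both in the local renewal asymptotics underlying (i) and in the excursion convergence underlying (ii)--(iii). The exponentially small tail of $\tau^{(\veps)}$ (with rate of order $\veps^2$) and the regular variation of $\veps$ provide just enough margin for these approximations, but the bookkeeping — in particular, reconciling the conditional excursion distribution with the concentration of $\eta$ on the same scale $b_{2n}^2$ — is where the real work lies.
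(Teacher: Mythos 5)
Your overall architecture for parts (ii) and (iii) matches the paper's: decompose at the last zero, use concentration of $\eta$ to freeze the drift at $\veps_{c_{2n}}=1/b_{2n}$, identify the straddling excursion as a constant-drift walk conditioned to survive $2[tb_{2n}^2]$ steps, pass to the drifted excursion law $\mn^{(-1)}(\,\cdot\,|\zeta>2t)$, and integrate against the limit law of $(2n-V_{2n})/b_{2n}^2$. The paper packages the key scaling limits as Lemma~\ref{meander}, proved by an explicit Radon--Nikodym comparison with the simple random walk plus the local CLT and the known conditioned-walk (meander) limits; your alternative for the survival asymptotics, namely Donsker for the drifted walk started at height $1$ together with the entrance-law characterization $\mn^{(-1)}=\lim_{a\downarrow 0}a^{-1}\pp_a$, can be made to work, but note that the event has vanishing probability of order $1/b_{2n}$, so a plain invariance principle is not enough: you need a local ingredient (e.g.\ the ruin formula for reaching height $\epsilon b_{2n}$ before $0$, or a change of measure as in the paper) to produce the $b_{2n}$ blow-up with the correct constant.

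The genuine gap is in part (i). Blackwell's renewal theorem is a statement about a \emph{fixed} step distribution as time tends to infinity; here the frozen step law $\tau^{(\veps_{c_{2n}})}$ and the time $2n$ both vary with $n$, so you need a version of the renewal theorem that is quantitative and uniform over the triangular array — this is exactly the role of the paper's Lemma~\ref{lem:convrate}, which gives geometric convergence to the stationary law $\mu_\delta$ of the oscillating walk at rate $(1+\delta^2)^{-n}$, combined with the fact that $n/(b_n^2\log b_n)\to\infty$ (Corollary~\ref{sequen}-(iv)); the value $2/b_{2n}$ then comes from $\mu_{\delta}(0)=2\delta/(1+\delta)$. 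Moreover, "the preliminary concentration estimate controls the difference" is too thin for the passage from the genuinely inhomogeneous walk to the frozen one: knowing $\eta_{2n}\approx c_{2n}$ does not by itself convert $P(X_{2n}=0)=\sum_k P(T_k=2n)$ into the frozen renewal mass, because the early increments of $T_k$ have drifts $\veps_j$ far from $\veps_{c_{2n}}$. The paper handles this with a two-sided monotone coupling (Lemma~\ref{lem:monot}) — comparing with constant drifts $\xi_{2n}=\min_{i\le(1+\theta)c_{2n}}\veps_i$ from above and $\beta$ from below — together with a restart of the chain at time $2n-\kappa_{2n}$ over a window $\kappa_{2n}$ chosen so that the frozen chain mixes ($\kappa_{2n}\veps_{c_{2n}}^2/\log(\veps_{c_{2n}}^{-1})\to\infty$) while $\eta$ stays localized. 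Some device of this kind (coupling plus a mixing window, or a uniform renewal estimate with an explicit rate) is indispensable, and supplying it is precisely the "real work" your sketch defers.
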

Part (i) states that, similarly to the classical renewal theory (cf.
\cite{feller1,feller2}), the probability to find the random walk at
the origin at time $2n$ is asymptotically reciprocal to the expected
duration of the of the last excursion away from the origin completed
before that time. Part (ii) provides limit results on the law of the
last visit time to the origin before a given time. It turns out that
under Assumption \ref{maina}, $b_{2n}^2$ is of smaller order that
$n$ (see Lemma \ref{sequen} below).  In particular, in contrast to
the classical arc-sine law (cf. \cite[p.~196]{durrett}), $V_{2n}/2n$
converges in probability to $1$. Finally, part (iii) is a limit
theorem for the law of excursion away from 0 straddling time $2n.$

\par
The next theorem concerns the asymptotic behavior of the maxima of
$X.$ Let \beqn \label{aihn} h_n :=\fracd{1}{2} b_n \log (c_n/b_n
)=\fracd{\log(\veps_{c_{_{n}}} c_n)}{2\veps_{c_{_{n}}}}. \feqn Note
that by Assumption~\ref{maina}, $\veps_{c_{_{n}}} c_n\to\infty$ as
$n\to\infty.$ Moreover, Corollary~\ref{sequen}-(v) below shows that
\beq \lim_{n\to\infty} \fracd{\log(\veps_{c_{_{n}}} c_n)}{\log n}=
\fracd{1-\alpha}{1+\alpha} .\feq When $\veps_n=n^{-\alpha}$ with
$\alpha \in (0,1),$ we have $h_n\sim
\fracd{1}{2}(1-\alpha)(1+\alpha)^{\frac{-1+\alpha}{1+\alpha}}
n^{\frac{\alpha}{1+\alpha}}\log n$ as $n\to\infty.$ \par Recall the
random variables $M_n$ defined in \eqref{maxima}. We prove in
Section~\ref{sec:sub}:
\begin{theorem}
\label{supt} Let Assumption~\ref{maina} hold. Then \beq
\lim_{n\to\infty} \fracd{1}{\log (\veps_{c_{n}} c_n)} \log\bigl(-
\log P(M_n \leq x h_n)\bigr)=1-x,\qquad x\in
(0,\infty)\backslash\{1\}.\feq In particular, \beq \lim_{n\to\infty}
\fracd{1}{\log (\veps_{c_{n}} c_n)} \log P\bigl(M_n  >x
h_n\bigr)=1-x,\qquad x>1. \feq The above limits remain true when
$M_n$ is replaced with $|M_n|$.
\end{theorem}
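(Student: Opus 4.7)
The plan is to decompose the walk into excursions from $0$ and apply gambler's ruin to each excursion. By the strong Markov property of the Markov chain $(X_n, \eta_n)$, conditional on the deterministic sequence $\veps$, the excursions $\{X_{T_{k-1}+i}\}_{0 \leq i \leq T_k - T_{k-1}}$ are independent, and the $k$-th excursion is distributed as a single return-to-$0$ excursion of a nearest-neighbor random walk on $\zz$ with constant drift $\veps_k$ toward the origin (since $\eta \equiv k$ throughout it). Writing $\xi_k^+ := \max_{T_{k-1} \leq i \leq T_k}(X_i \vee 0)$ and $r_k := (1+\veps_k)/(1-\veps_k)$, a standard gambler's ruin computation yields $P(\xi_k^+ \geq y) = \tfrac{1}{2}(r_k - 1)/(r_k^y - 1)$. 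Setting $y = xh_n$ and $K_n := \veps_{c_n} c_n$, and using $\log r_k = 2\veps_k + O(\veps_k^3)$, one has $r_k^y = K_n^{x\veps_k/\veps_{c_n}}(1+o(1))$ in the dominant range $k \asymp c_n$, so $P(\xi_k^+ \geq y) \sim \veps_k/K_n^{x\veps_k/\veps_{c_n}}$ there; for $k$ well below $c_n$ the exact formula shows these probabilities to be super-polynomially smaller in $K_n$.

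To pass from $\max_{k \leq c_n} \xi_k^+$ to $M_n$ one needs concentration of $\eta_n$ around $c_n$. The length $\ell_k$ of the $k$-th excursion has generating function $E[s^{\ell_k}] = (1-\sqrt{1-(1-\veps_k^2)s^2})/((1-\veps_k)s)$, finite for $s < 1/\sqrt{1-\veps_k^2}$; hence $\ell_k$ has exponential moments up to $\lambda \asymp \veps_k^2$, and Chernoff gives $P(|\eta_n - c_n| > \delta c_n) \leq e^{-c_0 \delta^2 K_n}$ (this stretched-exponential rate is crucial for the $x<1$ bound below). Combining with the inclusions
$\{\xi_k^+ \leq y, \,\forall k \leq N\} \cap \{\eta_n \leq N\} \subseteq \{M_n \leq y\} \subseteq \{\xi_k^+ \leq y, \,\forall k \leq \eta_n-1\}$,
taking $N = (1\pm\delta)c_n$ and using conditional independence of excursions, one obtains
\begin{equation*}
\prod_{k=1}^{\lfloor(1+\delta)c_n\rfloor}\!\bigl(1 - \tfrac{1}{2}P(\xi_k \geq y)\bigr) - e^{-c_0\delta^2 K_n} \;\leq\; P(M_n \leq y) \;\leq\; \prod_{k=1}^{\lfloor(1-\delta)c_n\rfloor}\!\bigl(1 - \tfrac{1}{2}P(\xi_k \geq y)\bigr) + e^{-c_0\delta^2 K_n}.
\end{equation*}

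The core computation is the Laplace-type asymptotics of $S_n(\sigma) := \sum_{k=1}^{\lfloor\sigma c_n\rfloor} \veps_k/K_n^{x\veps_k/\veps_{c_n}}$. Substituting $k = uc_n$ and using $\veps_k/\veps_{c_n} \sim u^{-\alpha}$ from regular variation, one finds $S_n(\sigma) \sim K_n \int_0^\sigma u^{-\alpha} K_n^{-xu^{-\alpha}}\,du$. For $K_n$ large, the integrand is monotonically increasing on $(0,\sigma]$ because the exponent $-xu^{-\alpha}\log K_n$ dominates, so Laplace's method identifies the right endpoint as the dominant point and yields $S_n(\sigma) = K_n^{1 - x\sigma^{-\alpha} + o(1)}$ (with the convention $\sigma^{-\alpha}=1$ when $\alpha = 0$, where the asymptotics are uniform in $\sigma$ close to $1$). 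The small-$k$ contributions are handled by the exact formula and are negligible.

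Finally we combine. Set $\gamma_\pm(\delta) := 1 - x(1\mp\delta)^{-\alpha}$, so $\gamma_- < \gamma_+$ and both tend to $1-x$ as $\delta \to 0$. Using $-\log(1-p) = p(1+O(p))$ termwise, $-\log\prod_{k \leq \sigma c_n}(1 - \tfrac{1}{2}P(\xi_k \geq y)) = \tfrac{1}{2}S_n(\sigma)(1+o(1))$. For $x > 1$, both $\gamma_\pm$ are negative for small $\delta$, so $S_n(\sigma) \to 0$ polynomially in $K_n$ and dominates the exponentially small concentration error; using $-\log P(M_n \leq xh_n) \sim P(M_n > xh_n)$, one concludes. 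For $x < 1$, $\gamma_\pm > 0$, and $\prod_{k \leq \sigma c_n} \sim e^{-S_n(\sigma)/2}$ is super-polynomially smaller than $e^{-c_0\delta^2 K_n}$ since $K_n^{\gamma_\pm} = o(K_n)$; again $-\log P(M_n \leq xh_n) = \tfrac{1}{2} S_n(\sigma)(1+o(1))$. In both cases the sandwich yields $K_n^{\gamma_-(\delta) + o(1)} \leq -\log P(M_n \leq xh_n) \leq K_n^{\gamma_+(\delta) + o(1)}$; taking $\log$, dividing by $\log K_n$, and then $\delta \to 0$ gives the limit $1-x$. The statement for $|M_n|$ follows identically with the prefactor $\tfrac{1}{2}$ replaced by $1$, which does not affect the log-log asymptotics. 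The principal technical hurdles are the Laplace asymptotics of $S_n(\sigma)$ (uniformity in $\sigma$ and control of the small-$k$ tail where regular variation is inaccurate) and obtaining the stretched-exponential concentration of $\eta_n$, well beyond the polynomial Chebyshev bound.
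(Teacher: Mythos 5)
Your proposal is correct and follows essentially the same route as the paper: decompose the path into independent excursions from the origin (the $k$-th governed by the constant drift $\veps_k$), apply the gambler's ruin formula to the maximum of each excursion, and sandwich $P(M_n\le xh_n)$ between products of such factors using the localization of $\eta_n$ around $c_n$ at a rate super-polynomial in $K_n=\veps_{c_n}c_n$ (which, as you correctly stress, is what makes the $x<1$ case work). The two genuine differences are minor. First, where you evaluate $\sum_{k\le\sigma c_n}P(\xi_k^+\ge xh_n)$ by a Laplace-type argument requiring uniform regular-variation estimates and a separate treatment of the small-$k$ range, the paper sidesteps this entirely: it bounds each factor by replacing $\rho_i=2\veps_i/(1-\veps_i)$ with its minimum over $i\le(1+\theta_n)c_n$ (for one direction) and its maximum over $[\lambda c_n,(1-\theta_n)c_n]$ (for the other, after simply discarding the factors with $i<\lambda c_n$), uses Theorem~\ref{th:rv}-(ii) to identify these extrema as $\sim 2\veps_{c_n}$ and $\sim 2\lambda^{-\alpha}\veps_{c_n}$, and lets $\lambda\to1$ at the end --- exactly the role your $\delta\to0$ plays, but with no Laplace asymptotics or small-$k$ tail analysis needed. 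Second, the concentration of $\eta_n$ that you sketch by a Chernoff bound on excursion lengths is in the paper a previously established input (Proposition~\ref{ldp}, via a G\"artner--Ellis computation of $\log E(e^{\lambda\rho_n^2T_n})$, and Corollaries~\ref{ldpeta} and \ref{weakdelta}); your one-line sketch is plausible and gives the right exponent $\asymp\delta^2K_n$, but it compresses what is a nontrivial page of work, so in a self-contained write-up that step would need the detail the paper supplies. One trivial slip: with your definition $\gamma_\pm(\delta)=1-x(1\mp\delta)^{-\alpha}$ one has $\gamma_+\le\gamma_-$, not $\gamma_-<\gamma_+$; this does not affect the sandwich since both tend to $1-x$.
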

\begin{corollary}
\label{cor:supt} Let Assumption~\ref{maina} hold. Then
\beq \limsup_{n\to\infty} X_n / h_n = \lim_{n\to\infty} M_n/h_n =
\lim_{n\to\infty} |M_n|/h_n =1,\feq  where the limits hold $P$-\mbox{\rm a.s.}
when $\alpha<1$ and in probability when $\alpha=1.$
\end{corollary}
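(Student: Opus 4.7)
The plan is to derive the corollary from the large-deviation estimates of Theorem \ref{supt} via a Borel--Cantelli argument along a geometric subsequence combined with the monotonicity of $M_n$. Throughout let $\lambda_n := \log(\veps_{c_n} c_n)$, which tends to $+\infty$ under Assumption \ref{maina}; when $\alpha < 1$, Corollary \ref{sequen} gives that $(\veps_{c_n} c_n)_{n \ge 1}$ is regularly varying with positive index $(1-\alpha)/(1+\alpha)$, so $\lambda_n$ grows like a positive multiple of $\log n$.

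First I handle convergence in probability, which covers the case $\alpha = 1$. Fix $\delta \in (0,1)$. Applied with $x = 1 + \delta$, Theorem \ref{supt} gives $P(M_n > (1+\delta) h_n) = \exp\bigl(-\delta \lambda_n (1 + o(1))\bigr) \to 0$; applied with $x = 1 - \delta$, it gives $-\log P(M_n \le (1-\delta) h_n) \to \infty$, so $P(M_n \le (1-\delta) h_n) \to 0$. Combined, $M_n/h_n \to 1$ in probability, and by the last sentence of Theorem \ref{supt} the identical argument yields $|M_n|/h_n \to 1$ in probability.

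For almost-sure convergence when $\alpha < 1$, fix $\theta > 1$ and set $n_k := [\theta^k]$. Since $\lambda_{n_k}$ is asymptotically linear in $k$, both probabilities above are of the form $\exp\bigl(-c(\delta) k (1+o(1))\bigr)$ and hence summable in $k$. Borel--Cantelli then gives $(1-\delta) h_{n_k} < M_{n_k} \le (1+\delta) h_{n_k}$ for all large $k$, $P$-\mbox{\rm a.s.} The sequence $h_n$ is regularly varying of nonnegative index $\alpha/(1+\alpha)$ (with a logarithmic factor when $\alpha = 0$), so $h_{n_{k+1}}/h_{n_k} \to \theta^{\alpha/(1+\alpha)}$ and Potter's bounds control $h_n$ uniformly on each block $[n_k, n_{k+1}]$. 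Combining this with the monotonicity of $M_n$ gives
\[
(1-\delta)\,\theta^{-\alpha/(1+\alpha)} \le \liminf_{n\to\infty} \frac{M_n}{h_n} \le \limsup_{n\to\infty} \frac{M_n}{h_n} \le (1+\delta)\,\theta^{\alpha/(1+\alpha)}, \quad P\mbox{-\rm a.s.}
\]
Sending $\delta \downarrow 0$ and $\theta \downarrow 1$ along countable subsequences yields $M_n/h_n \to 1$ $P$-\mbox{\rm a.s.}; the same argument gives $|M_n|/h_n \to 1$ $P$-\mbox{\rm a.s.}

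It remains to transfer the $M_n$ result to $\limsup_{n\to\infty} X_n/h_n$. Since $X_n \le M_n$, $\limsup_{n\to\infty} X_n/h_n \le 1$. For the matching lower bound, choose $\tau_n \le n$ with $X_{\tau_n} = M_n$; then $\tau_n \to \infty$ (as $M_n \to \infty$), $X_{\tau_n} = M_{\tau_n}$, and so $X_{\tau_n}/h_{\tau_n} = M_{\tau_n}/h_{\tau_n} \to 1$, yielding $\limsup_{n\to\infty} X_n/h_n \ge 1$. The principal technical nuisance is tracking $h_n$ between the subsequence points $n_k$; this is routine regular-variation bookkeeping, so there is no deep probabilistic input beyond Theorem \ref{supt}.
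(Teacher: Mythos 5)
Your proposal is correct and takes essentially the same route as the paper: convergence in probability read off directly from Theorem \ref{supt} (covering $\alpha=1$), then for $\alpha<1$ a Borel--Cantelli argument along a geometric subsequence followed by interpolation using the monotonicity of $M_n$ and the regular variation of $(h_n)$ with index $\alpha/(1+\alpha)$ (the paper controls the blocks via Theorem \ref{th:rv}-(ii) where you cite Potter's bounds, which is the same thing), and finally the argmax-time argument together with $X_n\le M_n$ to handle $\limsup_n X_n/h_n$. The only cosmetic difference is that the paper first extracts a polynomial bound $P(|M_n-h_n|>xh_n)\le n^{-z}$ and then sums it along $[\gamma^n]$, whereas you estimate the probabilities directly at the subsequence points; there is no gap.
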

We remark that under Assumption~\ref{maina}, $\lim_{n\to\infty} h_n / b_n = \infty,$ and hence
$\lim_{n\to\infty} X_n/M_n=0$ in probability. In particular, Theorem~\ref{walk} cannot be extended
to a functional CLT for a piecewise-linear interpolation of $X_n/b_n$ in $C(\rr_+,\rr).$
\section{Preliminaries}
\label{sec:prel} The goal of this section is threefold. First, in
a series of lemmas we state in Section~\ref{fixed} some general facts about
the measure $P^{\veps}$  in the case when $\veps$ is a constant sequence.
Second, in Section~\ref{vary}, we recall some useful properties of regularly varying
sequences (see Theorem~\ref{th:rv}), and then apply this theorem (see Corollary~\ref{sequen}) to draw conclusions regarding
$a_n,$ $b_n,$ and $c_n$ defined in \eqref{aen}. Finally,
in Section~\ref{positives} we deal with the asymptotic behavior
of a sequence of random walks with a negative drift conditioned to stay positive.
Lemma~\ref{meander} is the key to the proof of the last two parts of Theorem~\ref{lastvislastexc}.
\subsection{Random walks with a negative drift and oscillating random walks}
\label{fixed}
For a real $\delta \in [0,1),$ let $(\delta)$ denote the constant sequence
$\delta,\delta,\ldots$ To simplify the notations we write
$P^{(\delta)}_j$ for $P_{(j,1)}^{(\delta)},$ $P^{(\delta)}$ for
$P_{(0,1)}^{(\delta)}$, and let $E^{(\delta)}_j$ and $E^{(\delta)}$
denote the respective expectation operators. We remark that
$P^{(0)}=\pp$ while $P^{(\delta)}$ with $\delta\in(0,1)$ correspond
to so-called oscillating random walks (cf. \cite{kemperman,
borovkov}). If $\mu$ is a probability distribution on $\zz,$ we
write $P^{(\delta)}_{\mu}$ for the probability measure $\sum_{j\in
\zz}\mu(j) P^{(\delta)}_j$ and let $E^{(\delta)}_{\mu}$ denote the
corresponding expectation.
\par
Recall $T_n$ from \eqref{tin} and set \beqn \label{tau} \tau_n =
T_{n}-T_{n-1},\quad n\geq 1,\feqn where we convene that
$\infty-\infty=\infty.$ That is, $\tau_n$ is the duration of the
$n$-th excursion away from $0$. In the following lemma we recall a
well-known explicit expression for the moment generating function of
$\tau_n$ (see for instance\cite[p.~273]{feller1} or
\cite[p.~276]{durrett}). The moments of $\tau_n$ can be computed
as appropriate derivatives of the generating function.
\begin{lemma}
\label{lem:timoments} Let $\delta \in [0,1)$. Then \beq
E^{(\delta)}\bigl( s^{\tau_1}\bigr)=
\fracd{1-\sqrt{1-(1-\delta^2)s^2}}{1-\delta}\mbox{ \rm  for }0< s <
\fracd{1}{\sqrt{1-\delta^2}}. \feq In particular,
\item $E  \bigl(s^{\tau_n}\bigr)=
\fracd{1-\sqrt{1-(1-\veps_n^2)s^2}}{1-\veps_n}$ for $0< s <
(1-\veps_n^2)^{-1/2}.$
\item  $E(\tau_n)=1+\veps_n^{-1}.$
\item  $E(\tau_n^2)=1+\veps_n^{-1}+\veps_n^{-2}+\veps_n^{-3}.$
\item  $E(\tau_n^3)=1+\veps_n^{-1}+3\veps_n^{-4}+3\veps_n^{-5}.$
\end{lemma}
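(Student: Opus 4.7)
The statement is a standard generating function computation, so my plan is to derive the explicit formula for $E^{(\delta)}(s^{\tau_1})$ via first-step analysis and then simply differentiate at $s=1$ to extract the moments.

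For the generating function, I would argue as follows. Under $P^{(\delta)}$ the walker starts at $0$ and takes a symmetric first step, so $X_1 = \pm 1$ each with probability $1/2$. By the sign-symmetry of the drift $\mbox{sign}(i)\delta$ built into the transition law, the two conditional distributions of $\tau_1 - 1$ coincide, and I may condition on $\{X_1 = 1\}$. From $1$, until the walker returns to $0$, it performs a spatially homogeneous random walk with down-step probability $(1+\delta)/2$ and up-step probability $(1-\delta)/2$. Let $\sigma$ denote the hitting time of $0$ from $1$ for this homogeneous walk, and set $\phi(s) = E^{(\delta)}_1(s^\sigma)$. By first-step analysis and the strong Markov property, and because the hitting time of $0$ starting from $2$ decomposes as the sum of two independent copies of $\sigma$ (spatial homogeneity), $\phi$ satisfies
\[
\phi(s) = s\left[\frac{1+\delta}{2} + \frac{1-\delta}{2}\,\phi(s)^{2}\right].
\]
Solving this quadratic and selecting the branch with $\phi(0) = 0$ yields
\[
\phi(s) = \frac{1-\sqrt{1-(1-\delta^{2})s^{2}}}{(1-\delta)s}.
\]
Multiplying by the cost $s$ of the first step gives $E^{(\delta)}(s^{\tau_1}) = s\phi(s)$, which is the claimed formula, valid on the domain where the radicand is nonnegative.

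For the four enumerated items, I would first note that under $P$ the $n$-th excursion is independent of the past given $\eta_{T_{n-1}} = n$, during which the walker obeys the transition law with $\delta = \veps_n$. Hence $\tau_n$ has the same law under $P$ as $\tau_1$ under $P^{(\veps_n)}$, giving item (i). For the moments, set $f_n(s) = E(s^{\tau_n})$ and $q(s) = \sqrt{1-(1-\veps_n^2)s^2}$, so that $q(1) = \veps_n$ and $q'(s) = -(1-\veps_n^2)s/q(s)$. Then
\[
f_n'(s) = \frac{(1+\veps_n)s}{q(s)},\qquad f_n''(s) = \frac{1+\veps_n}{q(s)} + \frac{(1+\veps_n)(1-\veps_n^2)s^2}{q(s)^3},
\]
and similarly for $f_n'''(s)$ by one more differentiation. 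Evaluating at $s=1$ (where $q=\veps_n$), collecting terms, and using the factorial-moment identities $E(\tau_n) = f_n'(1)$, $E(\tau_n^2) = f_n''(1) + f_n'(1)$, and $E(\tau_n^3) = f_n'''(1) + 3f_n''(1) + f_n'(1)$ recovers the three closed forms. In the cubic case, the two middle powers cancel and yield the stated $1+\veps_n^{-1}+3\veps_n^{-4}+3\veps_n^{-5}$.

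No step here presents a genuine obstacle: the first-step argument is completely standard (and the authors cite Feller and Durrett for the generating function itself), and the moment computations are elementary algebra. The only place where one must be slightly careful is the cancellation that produces the peculiar pattern of powers in $E(\tau_n^3)$, which I would verify by writing $f_n'''(1)$ in the common-denominator form $3(1+\veps_n)^2(1-\veps_n)/\veps_n^{5}$ before combining with the lower-order terms.
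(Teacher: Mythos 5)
Your proposal is correct and is essentially the argument the paper points to: the paper offers no proof of Lemma~\ref{lem:timoments}, citing Feller and Durrett for the generating function and noting that the moments follow by differentiation, and your first-step analysis (conditioning on $X_1=\pm1$, the quadratic $\phi(s)=s\bigl[\tfrac{1+\delta}{2}+\tfrac{1-\delta}{2}\phi(s)^2\bigr]$ with the root vanishing at $0$, then $E^{(\delta)}(s^{\tau_1})=s\phi(s)$) is precisely the standard derivation behind that citation. I checked the moment algebra as well: with $q(1)=\veps_n$ one indeed gets $f_n'''(1)=3(1+\veps_n)^2(1-\veps_n)/\veps_n^{5}$ after simplification, and combining with $f_n'(1)$, $f_n''(1)$ via the factorial-moment identities reproduces $1+\veps_n^{-1}+\veps_n^{-2}+\veps_n^{-3}$ and, after the cancellation of the $\veps_n^{-2}$ and $\veps_n^{-3}$ terms, $1+\veps_n^{-1}+3\veps_n^{-4}+3\veps_n^{-5}$.
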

For our proofs in Sections~\ref{sec:super} and \ref{sec:sub}, we
need the following monotonicity result.
\begin{lemma}
\label{lem:monot} Let $\veps^1:=(\veps^1_n)_{n\ge 1}$ and
$\veps^2:=(\veps^2_n)_{n\ge 1}$ be two sequences such that
$\veps^j_n \in (0,1)$ for $j=1,2$ and $n\in\nn,$ and $\sup_{n\ge 1}
\veps^2_n \le \inf_{n\ge 1} \veps^1_n.$ Further, let $x_1,x_2\in
\zz_+$ be such that $x_2-x_1\in 2\zz_+$. Then there exist two
processes $Y^{j}:=(Y^j_n)_{n \ge 0},~j=1,2$, defined on the same
probability space, such that
\begin{enumerate}
\item For $j=1,2,$ $Y^j$ has the same distribution as $X$ under $P^{\veps^j}_{x_j}$.
\item $|Y^1_n| \le |Y^2_n|$ for all $n\ge 0$.
\end{enumerate}
\end{lemma}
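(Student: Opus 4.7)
The plan is to realize $Y^1$ and $Y^2$ on a common probability space driven by a single i.i.d.\ sequence $(U_n)_{n\ge 1}$ of Uniform$[0,1]$ random variables. Setting $Y^j_0 = x_j$ and $\eta^j_0 = 1$, I would inductively use $U_{n+1}$ to determine the next move of both walks: walk $j$ moves toward the origin when $U_{n+1} < p^j_n$, where $p^j_n := \bigl(1+\veps^j_{\eta^j_n}\bigr)/2$ if $Y^j_n \ne 0$ and $p^j_n := 1/2$ if $Y^j_n = 0$, and away from the origin otherwise (in the $Y^j_n = 0$ case I would identify ``toward'' with $+1$ and ``away'' with $-1$, which still gives the right marginal). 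Because $U_{n+1}$ is independent of the past and the marginal transitions agree with \eqref{eq:din}, each $Y^j$ has the distribution of $X$ under $P^{\veps^j}_{x_j}$.

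The inequality $|Y^1_n| \le |Y^2_n|$ is then established by induction on $n$, together with the parity constraint $|Y^2_n| - |Y^1_n| \in 2\zz_+$, which is automatic since $\bigl||Y^j_{n+1}| - |Y^j_n|\bigr| = 1$ at every step. When $|Y^2_n| - |Y^1_n| \ge 2$, a single step can shrink the gap by at most $2$, so the inequality survives; only the equality case $|Y^1_n| = |Y^2_n|$ needs scrutiny. The hypothesis $\sup_n \veps^2_n \le \inf_n \veps^1_n$ forces $p^1_n \ge p^2_n$, and partitioning $U_{n+1} \in [0,1]$ according to the intervals $[0,p^2_n)$, $[p^2_n, p^1_n)$, $[p^1_n, 1]$ leaves exactly three possibilities: both walks move inward, both move outward, or $Y^1$ moves inward while $Y^2$ moves outward. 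All three preserve $|Y^1_{n+1}| \le |Y^2_{n+1}|$.

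The main obstacle is the configuration $Y^1_n = -Y^2_n \ne 0$, because ``moving inward'' then corresponds to opposite spatial directions for the two walks, so a coupling that simply pairs moves by spatial direction would produce the wrong monotone relation. The common-uniform scheme nevertheless works cleanly: the threshold ordering $p^1_n \ge p^2_n$ is a statement about probabilities of moving inward, not about spatial directions, so whenever $Y^2$ moves inward—regardless of which side of $0$ it sits on—so does $Y^1$. The remaining corner case $Y^1_n = Y^2_n = 0$ is handled by driving both jumps with the same $U_{n+1}$, which lands them on the same sign and preserves equality of moduli; once these subcases are dispatched, all other configurations are automatic from the gap-size dichotomy above.
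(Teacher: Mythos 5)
Your coupling is correct and your argument is complete; interestingly, it is not quite the coupling the paper uses, and the difference is exactly the point you flagged as the main obstacle. The paper also drives both walks with a single uniform sequence, but it pairs the moves by spatial direction, setting $Y^j_{n+1}=Y^j_n+2\,\Ind_{\{U_n\ge \frac12(1+\mbox{sign}(Y^j_n)\veps^j_{\eta^j_n})\}}-1$, so that large values of $U_n$ send both walks to the right; it then claims by induction that $|Y^2_{n+1}|-|Y^2_n|\ge|Y^1_{n+1}|-|Y^1_n|$ unless $Y^1_n=0$, and closes with the same parity remark you use. That increment inequality fails precisely in the opposite-sign configuration you identified: starting from $x_1=0$, $x_2=2$, the coupled pair can reach $(Y^1_n,Y^2_n)=(-1,+1)$, and then any $U_n<\frac12(1-\veps^1_{\eta^1_n})$ sends both walks to the left, producing $|Y^1_{n+1}|=2>0=|Y^2_{n+1}|$. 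Your scheme, which thresholds on the probability of moving toward the origin so that $p^1_n\ge p^2_n$ guarantees the strongly drifted walk moves inward whenever the weakly drifted one does, is immune to this: the increment comparison then holds whenever $Y^1_n\ne 0$, and the even-gap parity argument disposes of the case $Y^1_n=0$ exactly as in the paper; your convention at the origin (both walks jumping to the same sign) also keeps the marginals correct, since each sign has probability $\frac12$. In effect you have applied the coupling to the reflected walks $|Y^j|$ rather than to the signed positions, which is the version of the construction that actually delivers conclusion (ii); your verification of the marginal laws, via independence of $U_{n+1}$ from the past and agreement of the thresholds with \eqref{eq:din}, matches the paper's.
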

\begin{proof}
Let $(U_n)_{n\ge1}$ be an IID sequence of uniform random variables
on $[0,1]$. For $j=1,2,$ set $Y^1_0=x_1,Y^2_0=x_2,$ $\eta^j_0=1,$
and let \beq Y^{j}_{n+1}=Y^{j}_n+2\Ind_{\bigr\{U_n \ge \frac
12\bigl(1+\mbox{sign}(Y_n^j)\veps^j_{\eta^j_n}\bigr)\bigl\}}-1
\quad \mbox{and} \quad
\eta^{j}_{n+1}=\eta^j_n+\Ind_{\{Y^j_{n+1}=0\}}. \feq Clearly,
$(Y_n^j)_{n\geq 0}$ has the same distribution as $X$ under
$P^{\veps^j}_{x_j}$. Moreover, using induction, it is not hard to
check that for all $n\geq 0,$ $|Y^{2}_{n+1}|-|Y^2_{n}| \ge
|Y^1_{n+1}|-|Y^1_n|$, unless $Y^1_n=0.$ But, since $Y_n^2-Y_n^1$ is
an even integer, $|Y^1_{n+1}|=1\leq |Y^2_{n+1}|$ also in the latter
case.
\end{proof}
In the next lemma, to avoid dealing with a periodic Markov chain,
we focus on the process $(X_{2n})_{n\geq 0}$ rather than on
$X=(X_n)_{n\geq 0}$ itself. It is well-known (see \cite{borovkov} for a
closely related general result) that the law of the Markov chain $X_{2n}$
under $P^{(\delta)}$ converges to its unique stationary
distribution $\mu_\delta.$ The latter is given by
\begin{equation}
\label{eq:invmeas9} \mu_{\delta}(0)=\fracd{2\delta}{1+\delta},
\quad \mu_{\delta}(2i)= \fracd{2\delta (1-\delta)}{(1+\delta)^3}
\Bigl(\frac{1-\delta}{1+\delta}\Bigr)^{2(|i|-1)},~i \in \zz \setminus \{0\}.
\end{equation}
Let $T=\inf\{n\geq 0: X_n=0\}.$ A standard coupling construction for
countable stationary Markov chains (see for instance \cite[p. 315]{durrett}) implies that
\begin{equation}
\label{eq:totvar_dist} \sup_{A\subset 2\zz_+} |
P^{(\delta)}(X_{2n}\in A)-\mu_{\delta}(A)|\le
P^{(\delta)}_{\mu_\delta}(T>2n).
\end{equation}
Estimating the righthand side of \eqref{eq:totvar_dist} we get:
\begin{lemma}
\label{lem:convrate} For all $\delta\in (0,1)$ and $n\ge 1$, \beq
\sup_{A\subset 2\zz_+} | P^{(\delta)}(X_{2n}\in
A)-\mu_{\delta}(A)|\le 2(1+\delta^2)^{-n}. \feq
\end{lemma}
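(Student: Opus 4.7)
The starting point is the inequality \eqref{eq:totvar_dist} already displayed in the text, which reduces the claim to the estimate $P^{(\delta)}_{\mu_\delta}(T>2n)\le 2(1+\delta^2)^{-n}$. The plan is to obtain this via a Chernoff-type argument, using the explicit moment generating function supplied by Lemma~\ref{lem:timoments} together with the geometric form of the stationary distribution in~\eqref{eq:invmeas9}.

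First I decompose $P^{(\delta)}_{\mu_\delta}(T>2n)=\sum_{i\ne 0}\mu_\delta(2i) P^{(\delta)}_{2i}(T>2n)$ (the term $i=0$ contributes nothing). Under $P^{(\delta)}_{2i}$ with $i\ne 0$, the walk is, until hitting $0$, a biased simple random walk with constant drift $-\mbox{sign}(i)\delta$, so the strong Markov property together with the sign symmetry of the model yields $E^{(\delta)}_{2i}(s^T)=\bigl(E^{(\delta)}_1(s^T)\bigr)^{2|i|}$. Writing $\tau_1=1+T'$ with $T'$ the first hitting time of $0$ from the initial step's location, Lemma~\ref{lem:timoments} gives
\beq
E^{(\delta)}_1(s^T)=\fracd{E^{(\delta)}(s^{\tau_1})}{s}=\fracd{1-\sqrt{1-(1-\delta^2)s^2}}{(1-\delta)s},\qquad 0<s<(1-\delta^2)^{-1/2}.
\feq
Pushing $s$ to the edge of the radius of convergence, $s^2=1/(1-\delta^2)$, collapses the square root to $0$ and yields $E^{(\delta)}_1(s^T)=\sqrt{(1+\delta)/(1-\delta)}$ together with $s^{-2n}=(1-\delta^2)^n$. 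Markov's inequality then produces
\beq
P^{(\delta)}_{2i}(T>2n)\le s^{-2n}E^{(\delta)}_{2i}(s^T)=(1-\delta^2)^n\Bigl(\fracd{1+\delta}{1-\delta}\Bigr)^{|i|}.
\feq

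Substituting into the decomposition, the factor $\bigl((1-\delta)/(1+\delta)\bigr)^{2(|i|-1)}$ in \eqref{eq:invmeas9} is just enough to overwhelm the exponential growth in $|i|$, so each one-sided sum becomes a convergent geometric series; a short algebraic simplification evaluates it to $1/(1+\delta)$, giving (after combining the two sides)
\beq
P^{(\delta)}_{\mu_\delta}(T>2n)\le \fracd{2}{1+\delta}(1-\delta^2)^n.
\feq
Finally, the elementary identity $(1-\delta^2)(1+\delta^2)=1-\delta^4\le 1$ converts $(1-\delta^2)^n$ to $(1+\delta^2)^{-n}$, and discarding the harmless factor $1/(1+\delta)\le 1$ produces the stated bound. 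The only mildly delicate point is the use of the boundary value $s=(1-\delta^2)^{-1/2}$ in Lemma~\ref{lem:timoments}, which I expect to be the main (but still minor) obstacle; it is handled either by continuity (since $E^{(\delta)}\tau_1<\infty$ the generating function extends continuously to this value) or by taking the supremum of the bound as $s\uparrow(1-\delta^2)^{-1/2}$.
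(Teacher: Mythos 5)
Your proposal is correct and follows essentially the same route as the paper: reduce to bounding $P^{(\delta)}_{\mu_\delta}(T>2n)$ via \eqref{eq:totvar_dist}, apply a Chernoff/Chebyshev bound using the explicit generating function of Lemma~\ref{lem:timoments} and the identity $E^{(\delta)}_{2i}(s^T)=\bigl(E^{(\delta)}_1(s^T)\bigr)^{2|i|}$, and sum the resulting geometric series against $\mu_\delta$, ending with $\tfrac{2}{1+\delta}$ times an exponential factor. The only difference is your choice of tilt: you push $s$ to the boundary $s^2=(1-\delta^2)^{-1}$ (legitimate by monotone convergence, or by taking $s\uparrow$ in the bound, and in fact yielding the slightly sharper rate $(1-\delta^2)^n$), whereas the paper sidesteps the boundary issue entirely by choosing $e^{2\lambda}=1+\delta^2$ strictly inside the domain of convergence.
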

\begin{proof}[Proof of Lemma \ref{lem:convrate}]
By Chebyshev's inequality, for every $\lambda >0,$
\begin{equation}
\label{eq:morecheb}
P_{\mu_\delta}(T>2n) \le e^{-2\lambda n}  E^{(\delta)}_{\mu_{\delta}} ( e^{\lambda T}).
\end{equation}
By Lemma~\ref{lem:timoments}, for $j\in \zz,$
\begin{equation}
\label{eq:eTj}
E^{(\delta)}_j(e^{\lambda T})=\bigl[E^{(\delta)}_1(e^{\lambda T})\bigr]^{|j|}=
\Bigl[\frac{1-\sqrt{1-(1-\delta^2)e^{2\lambda}}}{(1-\delta)e^{\lambda}}\Bigr]^{|j|},~e^{2\lambda}(1-\delta^2)<1.
\end{equation}
Note that the extra term $e^\lambda$ (comparing to the statement of
Lemma~\ref{lem:timoments}) in the denominator corresponds to the
difference between the definition of $\tau_1$, the time of the first {\sl
return} to $0,$ and $T$, the time of the first {\sl visit} to $0.$
\par
Choose $\lambda>0$ such that $e^{2\lambda}=1+\delta^2$. Clearly,
$e^{2\lambda}(1-\delta^2)=(1-\delta^4)<1$. Therefore, \beq
&& P_{\mu_\delta}(T>2n) \underset{\eqref{eq:morecheb}}{\le}
(1+\delta^2)^{-n} \sum_{j \in \zz}
\mu_{\delta}(2j) E^{(\delta)}_{2j} (e^{\lambda T})\\
&&\qquad  \underset{\eqref{eq:invmeas9},\eqref{eq:eTj}}{=}
\fracd{1}{(1+\delta^2)^n}\frac{2\delta}{1+\delta}\Bigl[1+\frac{2(1-\delta)}{(1+\delta)^2}
\sum_{j=1}^{\infty}\Bigl(\frac{1-\delta}{1+\delta} \Bigr)^{2(j-1)}
\Bigl(\frac{1-\sqrt{1-(1-\delta^2)e^{2\lambda}}}{(1-\delta)e^{\lambda}}\Bigr)^{2j}\Bigr]
\\ &&\qquad \quad = \fracd{1}{(1+\delta^2)^n}\frac{2}{1+\delta}
\leq 2(1+\delta^2)^{-n},
\feq
completing the proof.
\end{proof}
\subsection{Regularly varying sequences}
\label{vary}
We next recall some fundamental properties of regularly varying
sequences that are required for our proofs in the subcritical
regime.
\begin{theorem}
\label{th:rv} \cite{rvbook}, \cite{rvs} Let $r:=(r_n)_{n\ge 1}\in
\mbox{RV}(\rho)$ for some $\rho \in \rr $.
\begin{enumerate}
\item  Suppose that  $\rho > -1$. Then
$\lim_{n\to\infty} \fracd{1}{nr_n}\sum_{m=1}^n r_m=\fracd{1}{1+\rho}.$
\item Suppose that $\rho\ge 0$.  Let $(j_n)_{n\ge 1}$ be a sequence of integers such that
$\lim_{n\to\infty} j_n / n = \gamma$ for some $\gamma\in (0,1].$
Then $\max_{j_n\le i \le n } r_i \sim r_n$ and $\min_{ j_n \le i \le
n} r_i \sim \gamma^{\rho} r_n$ as $n\to\infty.$
\item
Suppose that $\rho >0$. Let $r^{\mbox{\rm \tiny inv}}:=(r^{\mbox{\rm
\tiny inv}}_n)_{n\ge 1}$, where $r^{\mbox{\rm \tiny inv}}_n =
\min\{i\ge 1:r_i \geq n\}$. Then $ r^{\mbox{\rm \tiny inv}}_n \in
\mbox{RV}(1/\rho)$ and $r^{\mbox{\rm \tiny inv}}_{[r_n]} \sim
r_{[r^{\mbox{\rm \tiny inv}}_n]}\sim n$ as $n\to\infty.$
\item Suppose that $\rho=0.$ Then $\lim_{n\to\infty} \fracd{\log r_n}{\log n}=0.$
\end{enumerate}
\end{theorem}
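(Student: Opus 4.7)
All four parts are classical facts about regularly varying sequences, and my plan is to reduce each of them to a single technical cornerstone: the uniform convergence theorem (UCT) for slowly varying sequences, which asserts that if $\ell\in \mbox{RV}(0)$ then $\ell_{[\lambda n]}/\ell_n\to 1$ holds uniformly for $\lambda$ in any compact subset of $(0,\infty)$. I would establish this by passing to the continuous setting, e.g.\ by defining $L(x)=\ell_{[x]}$ on $[1,\infty)$, verifying that $L$ is measurable and satisfies the pointwise hypothesis of Karamata's UCT for slowly varying functions of a real variable, and then translating back. The sequential UCT immediately yields the corollary used throughout the remainder of the proof: $r_{[\lambda n]}/r_n\to \lambda^\rho$ uniformly for $\lambda$ in any compact subset of $(0,\infty)$.

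Parts (ii) and (iv) are then nearly immediate. For (ii), writing $i=[sn]$ with $s\in [j_n/n,1]$ and using uniform convergence, $r_i/r_n=s^\rho(1+o(1))$ uniformly in $s$; since $\rho\ge 0$, the map $s\mapsto s^\rho$ is non-decreasing on $(0,1]$, so the supremum over the interval is asymptotic to $r_n$ (attained near $i=n$) and the infimum to $\gamma^\rho r_n$ (attained near $i=j_n$). For (iv), a standard consequence of the UCT (the Potter bounds) gives that for every $\delta>0$ both $n^{-\delta}\ell_n\to 0$ and $n^{\delta}\ell_n\to\infty$, which forces $\log r_n/\log n=\rho+o(1)=o(1)$ when $\rho=0$.

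For (i), I would prove the discrete Karamata theorem directly from the corollary above. Writing $\fracd{1}{nr_n}\sum_{m=1}^n r_m=\fracd{1}{n}\sum_{m=1}^n (r_m/r_n)$, the summand equals $(m/n)^\rho(1+o(1))$ uniformly for $m\in [\delta n,n]$ by the UCT, so this truncated sum is a Riemann sum for $\int_\delta^1 t^\rho dt$. The contribution from $m<\delta n$ is controlled by the Potter bounds: since $\rho>-1$, this tail is of order $\int_0^\delta t^\rho dt$, which vanishes as $\delta\downarrow 0$. Letting first $n\to\infty$ and then $\delta\downarrow 0$ yields the full integral $\int_0^1 t^\rho dt = 1/(1+\rho)$, as claimed.

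Part (iii) is where I expect the main difficulty, because $r$ need not be eventually monotone, so the defining inequality for $r^{\mbox{\rm \tiny inv}}$ is awkward to exploit directly. My plan is to first replace $r$ by its running maximum $\bar r_n=\max_{k\le n} r_k$ and to observe, using part (ii), that $\bar r_n\sim r_n$ (which is where $\rho>0$ is essential). For the non-decreasing sequence $\bar r$ the inequality $\bar r_{r^{\mbox{\rm \tiny inv}}_n-1}<n\le \bar r_{r^{\mbox{\rm \tiny inv}}_n}$ together with $\bar r_k/\bar r_{k-1}\to 1$ forces $\bar r_{r^{\mbox{\rm \tiny inv}}_n}\sim n$, and then the equivalence $\bar r\sim r$ upgrades this to $r_{r^{\mbox{\rm \tiny inv}}_n}\sim n$. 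The claim $r^{\mbox{\rm \tiny inv}}\in \mbox{RV}(1/\rho)$ follows by substituting $n\mapsto [\lambda m]$ in $r_{r^{\mbox{\rm \tiny inv}}_n}\sim n$ and combining with $r_{[\lambda m]}\sim \lambda^\rho r_m$ to deduce $r^{\mbox{\rm \tiny inv}}_{[\lambda m]}/r^{\mbox{\rm \tiny inv}}_m\to \lambda^{1/\rho}$; the companion asymptotics $r^{\mbox{\rm \tiny inv}}_{[r_n]}\sim n$ and $r_{[r^{\mbox{\rm \tiny inv}}_n]}\sim n$ are then routine bookkeeping.
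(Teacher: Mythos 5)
The paper itself offers no proof of this theorem: it is quoted from \cite{rvbook} (Section 1.9) and \cite{rvs}, so your proposal has to be measured against the standard arguments in those sources, whose architecture you do follow (a uniform convergence theorem plus Potter-type bounds, Karamata summation for (i), monotonicity of $s\mapsto s^{\rho}$ for (ii), inversion for (iii), logarithmic asymptotics for (iv)). Granting your sequential UCT, parts (i), (ii) and (iv) are fine, and the inversion scheme in (iii) works, with two small caveats: $\bar r_n:=\max_{k\le n}r_k\sim r_n$ needs a Potter bound to control the range $k\le \delta n$ (part (ii) alone only handles $j_n/n\to\gamma>0$), and $\bar r_k/\bar r_{k-1}\to 1$ is itself an application of the UCT at $\lambda=1$, since $r_k/r_{k-1}\to1$ is not part of the definition.

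The genuine gap is in the cornerstone itself. You propose to obtain the sequential UCT by setting $L(x)=\ell_{[x]}$ and ``verifying the pointwise hypothesis'' of Karamata's UCT. But Definition 2.3 only gives $\ell_{[\lambda n]}/\ell_n\to1$ along integers $n$, whereas the functional hypothesis requires $\ell_{[\lambda x]}/\ell_{[x]}\to1$ along real $x\to\infty$. Writing $x=n+\theta$ with $n=[x]$, $\theta\in[0,1)$, one has $[\lambda x]=[\lambda_n n]$ with $\lambda_n=\lambda+\lambda\theta/n\to\lambda$, so the verification already demands precisely the local uniformity in $\lambda$ that the UCT is supposed to deliver; equivalently it needs the bounded-shift property $\ell_{n+1}/\ell_n\to1$, which is not a soft consequence of the pointwise sequential definition (identities such as $\ell_{n+1}/\ell_n=(\ell_{n+1}/\ell_{2n+2})(\ell_{2n+2}/\ell_{2n})(\ell_{2n}/\ell_n)$ merely push the problem to shifts at larger indices). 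This sequence-to-function transfer is exactly the nontrivial embedding theorem of Bojanic--Seneta type (Theorem 1.9.5 in \cite{rvbook}, and the main content of \cite{rvs}) that the paper is implicitly invoking, and as written your reduction is essentially circular at this point. Either cite that embedding theorem explicitly, or prove the UCT directly at the sequence level (mimicking Karamata's argument for functions); with that repaired, the rest of your sketch goes through.
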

\begin{corollary}
\label{sequen} Let Assumption~\ref{maina} hold and 
recall $a=(a_n)_{n\in\nn},$ $b=(b_n)_{n\in\nn},$ and $c=(c_n)_{n\in\nn}$ introduced in \eqref{aen}. We have
\begin{enumerate}
\item   $a_n\sim (1+\alpha)^{-1}n\veps_n^{-1}$ as $n\to\infty.$
In particular, $a\in \mbox{RV}(1+\alpha).$
\item $c\in \mbox{RV}(1/(1+\alpha))$.
\item $b_n=\veps_{c_{_{n}}}^{-1}\sim (1+\alpha) n/c_n$ as $n\to\infty.$
In particular, $b\in \mbox{RV}(\alpha/(1+\alpha))$.
\item $\displaystyle \lim_{n \to \infty}
\frac{n}{b_n^2\log b_n} =\lim_{n\to\infty} \fracd{c_n^2}{n\log
b_n}=\infty.$
\item $\lim_{n\to\infty} \fracd{\log(b_n/c_n)}{\log n}=
\fracd{1-\alpha}{1+\alpha} .$
\end{enumerate}
\end{corollary}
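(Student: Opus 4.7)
The plan is to derive each part of Corollary~\ref{sequen} from Theorem~\ref{th:rv} applied to the sequence $r := (1/\veps_n) \in \mbox{RV}(\alpha)$, which diverges to $\infty$ under Assumption~\ref{maina}, and to its generalized inverse. I would prove the parts in the natural order (i), (ii), (iii), (v), (iv), following the dependency chain $1/\veps \to a \to c \to b$.

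For (i), apply Theorem~\ref{th:rv}(1) to $r$: since $\alpha \ge 0 > -1$, one obtains $\sum_{m=1}^n \veps_m^{-1} \sim n\veps_n^{-1}/(1+\alpha)$, and the additional linear term $n$ in the definition of $a_n$ is absorbed into the error because $\veps_n \to 0$. Part (ii) follows by observing that $c_n = \min\{i: a_i \ge n\}$ is exactly the generalized inverse considered in Theorem~\ref{th:rv}(3); since $a \in \mbox{RV}(1+\alpha)$ with $1+\alpha > 0$, that theorem delivers both $c \in \mbox{RV}(1/(1+\alpha))$ and the key relation $a_{c_n} \sim n$. Part (iii) then follows by plugging (i) at index $c_n$ into $a_{c_n} \sim n$, yielding $n \sim c_n \veps_{c_n}^{-1}/(1+\alpha)$ and hence $b_n \sim (1+\alpha)n/c_n$, which puts $b \in \mbox{RV}(\alpha/(1+\alpha))$. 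For (v), combine (ii) and (iii) to express the ratio $c_n/b_n$ as $n^{(1-\alpha)/(1+\alpha)}$ times a slowly varying factor, and then Theorem~\ref{th:rv}(4) applied to the slowly varying part yields the logarithmic limit.

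Part (iv) is the only place where the regime-specific portion of Assumption~\ref{maina} is actually needed, and it is the main obstacle. From (iii), $b_n c_n \sim (1+\alpha)n$, so the two limits in (iv) differ by a positive multiplicative constant, and it suffices to prove $n/(b_n^2 \log b_n) \to \infty$. When $\alpha < 1$, $b^2 \in \mbox{RV}(2\alpha/(1+\alpha))$ with index strictly less than $1$, and since $\log b_n$ is negligible compared to any positive power of $n$ by Theorem~\ref{th:rv}(4), the product $b_n^2 \log b_n$ is of order $n^{2\alpha/(1+\alpha)}$ times a slowly varying factor, hence $o(n)$. When $\alpha = 1$, write $\veps_n = \ell_n/n$ with $\ell_n$ slowly varying; parts (i)--(iii) then give $c_n \sim \sqrt{2n\ell_{c_n}}$, $b_n \sim \sqrt{2n/\ell_{c_n}}$, and $\log b_n \sim \frac{1}{2}\log n$. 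Consequently $n/(b_n^2 \log b_n) \sim \ell_{c_n}/\log n$, and the hypothesis $n\veps_n/\log n = \ell_n/\log n \to \infty$, combined with $\log c_n \sim \frac{1}{2}\log n$, transfers to $\ell_{c_n}/\log n \to \infty$ and finishes the argument.
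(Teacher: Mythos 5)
Your proposal is correct and follows essentially the same route as the paper: parts (i)--(iii) and (v) come from Theorem~\ref{th:rv} applied to $1/\veps$, the inverse sequence $c$, and the relation $a_{c_n}\sim n$, exactly as in the paper's argument. For (iv) the paper compresses your two cases into the single observation that, by (iii), $n\veps_{c_n}^2/\log(\veps_{c_n}^{-1})\sim (1+\alpha)^{-1}c_n\veps_{c_n}/\log(\veps_{c_n}^{-1})\to\infty$ under Assumption~\ref{maina}, which is the same substance as your explicit $\alpha<1$ versus $\alpha=1$ analysis.
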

Part (i) of the corollary follows from Theorem \ref{th:rv}-(i). Once
this is established, part (ii) follows from
Theorem~\ref{th:rv}-(iii). Next, claims (i) and (iii) of
Theorem~\ref{th:rv} imply that \beq c_n \veps_{c_{_{n}}}^{-1}\sim
(1+\alpha)a_{c_{_{n}}} \sim (1+\alpha) n,\qquad
\mbox{as}~n\to\infty,\feq which proves (iii). To see that (iv) holds
true observe that part (iii) along with Assumption~\ref{maina}
imply: \beq \fracd{n \veps_{c_{_{n}}}^2}{\log
(\veps^{-1}_{c_{_{n}}})}\sim \fracd{1}{1+\alpha} \cdot \fracd{c_n
\veps_{c_{_{n}}}}{\log (\veps_{c_{n}}^{-1})} \to
\infty~\mbox{as}~n\to \infty. \feq Finally, (v) follows from (ii)
and (iii) combined with Theorem~\ref{th:rv}-(iv).
\subsection{Random walks conditioned to stay positive}
\label{positives}
The aim of this section is to prove Lemma~\ref{meander} below.
We start by recalling some features of the excursion measure of negatively drifted Brownian motion above its infimum
(cf Chapter~VI.8 in \cite{rw2}, in particular Lemma~VI.55.1).
\par
Let $(Z_t)_{t\geq 0}$ be the canonical process
on $C(\rr_+,\rr),$ namely $Z_t(\omega)=\omega(t)$ for $\omega\in C(\rr_+,\rr),$
and, for $c\leq 0,$ let $\bw^{(c)}$ be the law on $C(\rr_+,\rr)$ which makes $Z_t-ct$ into the standard
Brownian motion. For $t\in\rr_+,$ let $Y_t=Z_t-\inf\{Z_s:s\leq t\},$
$\zeta=\inf\{t>0: Y_t=0\},$ and define $\witi Y_t=Y_{t\wedge \zeta}.$ Then $\witi Y=\bigl(\witi Y_t\bigr)_{t\geq 0}$ is a
time-homogeneous continuous Markov process "killed at zero'' with taboo transition density function
\beq
&&
\calp_t^{(c)}(x,y):=\fracd{\bw^{(c)}\bigl(\witi Y_t\in dy,\zeta>t \bigl|\witi Y_0=x\bigr)}{dy}=\\
&& \qquad \qquad =\fracd{1}{\sqrt{2\pi t}}
e^{c(y-x)-c^2t/2}\bigl[e^{-(y-x)^2/2t}-e^{-(y+x)^2/2t}\bigr], \qquad
x,y>0,t>0. \feq In words, $\witi Y$ is an excursion of the Brownian
motion with drift $c\leq 0$ above its infimum process and $\zeta$ is
its lifetime.
\par
For $\omega\in C(\rr_+,\rr)$ let
$\zeta(\omega)=\inf\{t>0:\omega(t)=0\},$ and let \beq U=\{f\in
C(\rr_+,\rr):\omega(0)=0~\mbox{and}~\omega(t)=0~\mbox{for}~t>\zeta(f)\}
\feq be the space of excursions from zero. By Ito's theorem, under
$\bw^{(c)},$ the excursions of the process $Y=(Y_t)_{t\geq 0}$ away
from zero form a Poisson point process on $(0,+\infty)\times U$ with
intensity $dt\times \mn^{(c)}.$ The finite-dimensional distributions
of $\mn^{(c)}$ can be expressed as follows. Let \beq
\calr_t^{(c)}(y):=\fracd{2y}{\sqrt{2\pi t^3}}
\exp\Bigl(-\fracd{(y-ct)^2}{2t}\Bigr),\qquad y>0, t>0. \feq Then,
for $0<t_1<\ldots <t_m$ and $x_1,\ldots,x_m>0,$ \beqn
\label{emeasure} \mn^{(c)}\{f(t_k)\in dx_k:1\leq k\leq
m\}=\calr_{t_1}^{(c)}(x_1)dx_1\prod_{k=2}^m
\calp_{t_{k-1},t_k}^{(c)}(x_{k-1},x_k)dx_k. \feqn The law
$\calr_t^{(c)}(y)dy$ is called the entrance law associated with
$\mn^{(c)}.$ Note that \beqn \label{duration}
\mn^{(c)}(\zeta>t)=\int_0^\infty \calr^{(c)}_t(y)dy. \feqn In
particular, $\mn^{(0)}(\zeta>t)=\sqrt{\fracd{2}{\pi t}}$ whereas for
$c<0,$ $\mn^{(c)}(\zeta>t)=|c|\cdot \mn^{(-1)}(\zeta>tc^2).$ More
generally, \eqref{emeasure} implies that for any constant $c<0,$
\beqn \label{scaling} \mn^{(c)}\bigl(\bigl(|c|\cdot
f(t/c^2)\bigr)_{t\in \rr_+}\in \cdot~\bigr)=|c|\cdot
\mn^{(-1)}\bigl( \bigl(f(t)\bigr)_{t\in \rr_+}\in \cdot~\bigr).
\feqn For $m>0,$ let $C[0,m] : = \{ f : [0,m] \to \rr, f \mbox{
continuous } \}$, equipped with the topology of uniform convergence.
Let $\pi_m:C(\rr_+,\rr)\to C[0,m]$ be the canonical projection
defined by $\pi_m \omega(t)=\omega(t)$ for $t\in [0,m].$ Let
$\mn^{(c)}(~\cdot~|\zeta>t):=\fracd{\mn^{(c)}(~\cdot~;\zeta>t)}{\mn^{(c)}(\zeta>t)}.$
A non-homogeneous in time Markov process $W_+$ on $C[0,1]$ with the
law \beq \mmm^{(0)}(A):=P(W_+\in A~)=\mn^{(0)}(\pi^{-1}_1 A| \zeta
>1), \quad A~\mbox{is a Borel subset of}~C[0,1], \feq is called
Brownian meander (see for instance \cite{bertoin,miller} and
references therein for further background). The meander is a weak
limit of zero-mean random walks conditioned to stay positive (see
\cite{bolthausen,iglehart} and \cite{doney}). Its finite-dimensional
distributions were first computed in \cite{belkin}, and it is not
hard to check that these are consistent with our definition of the
meander. The Brownian meander can also be understood as a Brownian
motion in $C[0,1]$ conditioned to stay positive up to time 1,
defined rigorously with the help of an appropriate $h$-transform.
\par
Analogously, for $c<0,$ we call a non-homogeneous in time Markov process $W^{(c)}_+$ on $C[0,1]$ with the law
\beq
\mmm^{(c)}(A):=P(W^{(c)}_+\in A~)=\mn^{(c)}(\pi^{-1}_1 A| \zeta >1), \quad
A~\mbox{is a Borel subset of}~C[0,1],
\feq
a drifted Brownian meander with drift $c.$
\par
It is well-known a sequence of random walks with well-chosen
asymptotically vanishing drifts converges in distribution to drifted
Brownian motion (see for instance Theorem II.3.2 in \cite{jacod}).
Part (ii) of the following lemma asserts that such walks, when
conditioned to stay positive up to the scaling time, also converge
to a non-degenerate limit, which, not surprisingly, is the drifted
Brownian meander. Part (iii) is then a direct consequence of this
fact.
 Recall the notation $P^{(\delta)}$ was introduced in the first
paragraph of the section and corresponds to a constant sequence
$\delta,\delta,...$. Define \beqn \label{positive}
\Lambda_n=\{X_1>0,\ldots,X_n>0\}. \feqn
\begin{lemma}
\label{meander} Let $(j_n)_{n \in \nn}$ be a sequence of positive reals and $(m_n)_{n \in \nn}$ be
sequence of positive integers such that $\lim_{n \to \infty}j_n =
\infty$, $\lim_{n\to\infty} j_n/j_{n+1}=1,$ and $\lim_{n \to \infty}
\veps_{m_n} j_n = \gamma \in (0, \infty)$.
\par
Then,
\begin{enumerate}
\item $\lim_{n\to\infty} j_n P^{(\veps_{m_n})}(\Lambda_{[j_n^2]})
=\fracd{1}{2} \mn^{(-\gamma)}(\zeta > 1). $
\item
For $n\in\nn,$ let $Y_n=\bigl(Y_n(t)\bigr)_{t\in \rr_+}$ be a continuous process for which
$Y_n\bigl(j_n^{-2}k\bigr)=j_n^{-1}X_k$ whenever $k \in \zz_+,$ and which is
linearly interpolated elsewhere.
\\
Then the process $\pi_1 Y_n$ under $P^{(\veps_{m_n})}(~\cdot~|\Lambda_{[ j_n^2]})$ converges weakly in
$C[0,1]$ to a drifted Brownian meander with drift $-\gamma.$
\item
For $n\in\nn,$ let $\witi Y_n=\bigl(\witi Y_n(t)\bigr)_{t\in \rr_+}$ be a continuous process for which
$\witi Y_n\bigl(j_n^{-2}k\bigr)=$ $j_n^{-1}X_{k \wedge T_1}$ whenever $k \in \zz_+,$ and which is
linearly interpolated elsewhere.
\\
Then the process $\witi Y_n$ under
$P^{(\veps_{m_n})}(~\cdot~|\Lambda_{[ j_n^2]})$ converges weakly in
$C(\rr_+,\rr)$ to a process with law
$\mn^{(-\gamma)}(~\cdot~|\zeta>1)$.
\end{enumerate}
\end{lemma}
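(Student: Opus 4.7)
The plan is to prove part (i) by means of an explicit Laplace transform calculation, and then leverage part (i) together with an invariance principle for drifted random walks to establish parts (ii) and (iii). The key observation is that, by the symmetry of $P^{(\delta)}$, the excursion either stays positive or stays negative, so
\beq
P^{(\veps_{m_n})}(\Lambda_k) = \tfrac{1}{2} P^{(\veps_{m_n})}(\tau_1 > k),
\feq
for every $k \ge 1$. This reduces part (i) to the asymptotics of the tail of $\tau_1$ under $P^{(\veps_{m_n})}$.

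For part (i), I would plug $s = e^{-\lambda/j_n^2}$ into the generating function formula from Lemma~\ref{lem:timoments}. A Taylor expansion using $(1 - \veps_{m_n}^2) s^2 = 1 - \gamma^2/j_n^2 - 2\lambda/j_n^2 + o(1/j_n^2)$ yields
\beq
j_n \bigl( 1 - E^{(\veps_{m_n})}\bigl(e^{-\lambda \tau_1 / j_n^2}\bigr) \bigr) \longrightarrow \sqrt{\gamma^2 + 2\lambda} - \gamma.
\feq
The right-hand side is precisely $\lambda \int_0^\infty e^{-\lambda t}\,\mn^{(-\gamma)}(\zeta > t)\,dt$, as can be checked either directly from the density $\calr_t^{(-\gamma)}$ or by recognising it as the Laplace exponent of the inverse local time at the infimum of a Brownian motion with drift $-\gamma$. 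Since the left-hand side equals $\lambda \int_0^\infty e^{-\lambda t}\,j_n P^{(\veps_{m_n})}(\tau_1 > t j_n^2)\,dt$, the continuity theorem for Laplace transforms, combined with the monotonicity of $t \mapsto P(\tau_1 > t j_n^2)$ and the continuity of $t \mapsto \mn^{(-\gamma)}(\zeta > t)$, yields $j_n P^{(\veps_{m_n})}(\tau_1 > t j_n^2) \to \mn^{(-\gamma)}(\zeta > t)$ for every $t > 0$. Taking $t = 1$ and using $[j_n^2]/j_n^2 \to 1$ and $j_n/j_{n+1} \to 1$ gives part (i).

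For part (ii), I would combine a Donsker-type functional invariance principle for the unconditioned walk with a conditioning argument. The rescaled process $\pi_1 Y_n$ under $P^{(\veps_{m_n})}$ (without conditioning) converges weakly in $C[0,1]$ to $\bw^{(-\gamma)}$-distributed Brownian motion with drift $-\gamma$ starting from $0$: this is standard since the per-step variance is $1 - \veps_{m_n}^2 \to 1$ and the drift per unit rescaled time is $\veps_{m_n} j_n \to \gamma$. To pass to the walk conditioned on $\Lambda_{[j_n^2]}$, I would identify the finite-dimensional distributions by writing, for $0 < t_1 < \cdots < t_k \le 1$,
\beq
P^{(\veps_{m_n})}\bigl(X_{[j_n^2 t_i]} \in dx_i, 1 \le i \le k \mid \Lambda_{[j_n^2]}\bigr) = \frac{\prod_{i} q_n(t_i - t_{i-1}; x_{i-1}, x_i)\, P_{x_k}^{(\veps_{m_n})}(\Lambda_{[j_n^2(1-t_k)]})}{P^{(\veps_{m_n})}(\Lambda_{[j_n^2]})},
\feq
where $q_n$ is the taboo transition kernel avoiding $0$. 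Local central limit theorem estimates for the drifted walk and a discrete reflection-principle argument give $q_n(t; x, y) \sim j_n^{-1} \calp_t^{(-\gamma)}(x/j_n, y/j_n)$, while a suitable refinement of part (i) (entrance law at $t_1$ in place of $[j_n^2]$) yields the factor $\calr_{t_1}^{(-\gamma)}(x_1)$; combined with the asymptotics of the denominator furnished by part (i), this matches the formula \eqref{emeasure} conditioned on $\{\zeta > 1\}$. Tightness follows from moment estimates for the unconditioned walk together with the fact that the conditioning is $O(j_n)$-large in density. I expect this verification of the finite-dimensional distributions—in particular producing the entrance law $\calr_{t_1}^{(-\gamma)}$ rigorously from local estimates—to be the main technical obstacle.

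Finally, for part (iii), I would use the strong Markov property at time $[j_n^2]$ and at successive hitting times, applied to the endpoint $X_{[j_n^2]}$ under $P^{(\veps_{m_n})}(\,\cdot\,\mid \Lambda_{[j_n^2]})$. Once part (ii) is established, $X_{[j_n^2]}/j_n$ converges in distribution to $W^{(-\gamma)}_+(1)$, whose law is the entrance density of $\mn^{(-\gamma)}(\,\cdot\,\mid \zeta > 1)$ at time $1$. Starting the walk afresh from $X_{[j_n^2]}$ and running it until $T_1$, one applies the invariance principle for the drifted walk killed upon hitting $0$ (whose scaling limit is governed by the taboo kernel $\calp_t^{(-\gamma)}$) to show that the post-$[j_n^2]$ piece of $\witi Y_n$ converges to a process with the post-time-$1$ portion of $\mn^{(-\gamma)}(\,\cdot\,\mid \zeta > 1)$. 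Gluing this to the part (ii) limit on $[0,1]$ via continuity of the concatenation map in $C(\rr_+,\rr)$ yields the full weak convergence; the fact that $\witi Y_n$ is constant after $T_1/j_n^2$ ensures tightness on $C(\rr_+,\rr)$.
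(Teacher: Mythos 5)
Your part (i) is correct and takes a genuinely different route from the paper. The identity $P^{(\delta)}(\Lambda_k)=\tfrac12 P^{(\delta)}(\tau_1>k)$ is valid here (the walk is nearest-neighbor and symmetric at the origin, so on $\{\tau_1>k\}$ it keeps a constant sign), the expansion of the generating function of Lemma~\ref{lem:timoments} at $s=e^{-\lambda/j_n^2}$ does give $j_n\bigl(1-E^{(\veps_{m_n})}(e^{-\lambda\tau_1/j_n^2})\bigr)\to\sqrt{\gamma^2+2\lambda}-\gamma$, and this limit is indeed $\lambda\int_0^\infty e^{-\lambda t}\mn^{(-\gamma)}(\zeta>t)\,dt$ (one can check it from $\mn^{(-\gamma)}(\zeta>t)=\sqrt{2/(\pi t)}\,e^{-\gamma^2 t/2}-2\gamma\,\ol{\Phi}(\gamma\sqrt{t})$, with $\ol{\Phi}$ the standard normal tail, which integrates to $\sqrt{2\lambda+\gamma^2}-\gamma$); monotonicity of $t\mapsto j_nP^{(\veps_{m_n})}(\tau_1>tj_n^2)$, the continuity theorem for Laplace transforms and continuity of $t\mapsto\mn^{(-\gamma)}(\zeta>t)$ then give the pointwise convergence and hence (i). The paper instead expresses $P^{(\veps_{m_n})}(\Lambda_{[j_n^2]})$ as an endpoint average against the simple-walk law through the explicit likelihood ratio \eqref{radon} and invokes the local limit theorem together with the Rayleigh limit for the conditioned endpoint; your argument is more elementary and self-contained, while the paper's sets up exactly the machinery it reuses for part (ii).

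The genuine gap is in part (ii), which carries the technical weight of the lemma: all three of its key ingredients are asserted rather than proved. You need (a) a taboo local CLT $q_n(t;x,y)\sim j_n^{-1}\calp_t^{(-\gamma)}(x/j_n,y/j_n)$ for the drifted walk, (b) an entrance-law refinement of (i) producing $\calr^{(-\gamma)}_{t_1}$ (which you yourself flag as the main unresolved obstacle — it is a local limit theorem for the drifted walk conditioned to stay positive, not a consequence of the Laplace-transform computation in (i)), and (c) tightness of the conditioned laws; the phrase ``the conditioning is $O(j_n)$-large in density'' is not an argument, since conditioning on an event of probability of order $1/j_n$ inflates any unconditioned oscillation bound by a factor $j_n$, so one needs quantitative modulus-of-continuity estimates for the conditioned walk — precisely the hard part of Iglehart's and Bolthausen's theorems. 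The paper closes exactly this gap with an observation you almost make in your ``step-counting/reflection'' remark: for a nearest-neighbor walk the Radon--Nikodym factor of $P^{(\veps_{m_n})}$ with respect to $\pp$ on positive paths of fixed length depends only on the endpoint (formula \eqref{radon}), so both the finite-dimensional convergence and tightness of $P^{(\veps_{m_n})}(\,\cdot\,|\Lambda_{[j_n^2]})$ are inherited directly from the simple-walk meander results of Bolthausen and Iglehart, with the endpoint factor controlled by part (i) and \eqref{local}; you should either adopt this change-of-measure step or actually carry out (a)--(c). Your part (iii) — Markov property at time $[j_n^2]$, invariance principle for the killed drifted walk afterwards, and gluing — is essentially the paper's argument and is fine once (ii) is secured.
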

\begin{proof}
Since $P^{(\veps)}(\Lambda_{j})$ is a non-increasing function of $j$ and
$j_n/j_{n+1}\sim 1$ as $n\to\infty,$ we can assume without loss of
generality that $[j_n^2]\in 2\zz_+$.
\par
The proof of the lemma is based on the fact that, as we already
mentioned, the result is known for a symmetric random walk, and that
we can explicitly compare the law of a nearest-neighbor drifted walk
and the distribution $\mathbb{P}$ of the simple random walk. Set
$\veps_{m_n}=\delta_n$ and $J_n=\{y\in \rr: yj_n\in\nn\}.$ Counting
the number of steps to the right and to the left, we obtain for any
$m\in \nn,$ $0 <t_1<\ldots <t_m \leq1$ and $y_1,\ldots, y_m \in
\rr,$ $y\in J_n,$ \beqn \nonumber && P^{(\delta_n)}\bigl(
Y_n(t_k)=y_k,~k=1,\ldots,m;X_{[j_n^2]}=2yj_n\bigr) = \\
\label{radon} && \qquad =\pp\bigl( Y_n(t_k)=y_k,~k=1,\ldots,m;
X_{[j_n^2]}=2yj_n\bigr) \fracd{(1-\delta_n^2)^{[j_n^2]/2}
}{1-\delta_n} \Bigr(\frac{1-\delta_n}{1+\delta_n}\Bigr)^{yj_n},
\feqn where the extra factor $(1-\delta_n)^{-1}$ is due to the fact
that the transition kernels of the random walk under
$P^{(\delta_n)}$ and $\pp$ coincide at the origin. In particular,
\beq P^{(\delta_n)}(\Lambda_{[j_n^2]}) = \!\!\sum_{y\in J_n} \!\!
\pp\bigl( \Lambda_{[j_n^2]}, X_{[j_n^2]}=2y j_n\bigr)
\fracd{(1-\delta_n^2)^{[j_n^2]/2} }{1-\delta_n}
\Bigr(\frac{1-\delta_n^2}{1+\delta_n}\Bigr)^{yj_n}.\feq $\mbox{}$
\\ (i)~Using the identity $\pp(
\Lambda_{[j_n^2]}) = \fracd{1}{2} \pp(X_{[j_n^2]}=0)$ (see for instance \cite[p. 198]{durrett}), we obtain:
\beq
P^{(\delta_n)}(\Lambda_{[j_n^2]})&=&
\fracd{j_n}{2}\int_0^{\infty} du\pp
(X_{[j_n^2]}=0) \pp\bigl(X_{[j_n^2]}=2[j_n u]
\bigl| \Lambda_{[j_n^2]}\bigr) \fracd{(1-\delta_n^2)^{[j_n^2]/2}}{1-\delta_n}
\Bigl(\fracd{1-\delta_n}{1+\delta_n}\Bigr)^{[j_n u]}.
\feq
The local limit theorem for the simple random walk (see for instance \cite[p. 199]{durrett}) implies
that
\beqn
\label{local}
 \lim_{n\to\infty} j_n \pp (X_{[j_n^2]}=0) = 2\lim_{n\to\infty} j_n \pp (\Lambda_{[j_n^2]}) =
\sqrt{\fracd{2}{\pi}}. \feqn Furthermore (see for instance
\cite{iglehart}), the sequence of probability measures $(\nu_n)$
defined on Borel sets $A\subset \rr_+$ by \beq \nu_n(A):=j_n\int_A
du\pp\bigl(X_{[j_n^2]}=2[j_n u] \bigl| \Lambda_{[j_n^2]}\bigr) \feq
converges weakly to the Rayleigh distribution on $\rr_+$ with the
density $ue^{-\frac{u^2}{2}}du.$ Using the dominated convergence
theorem, we conclude that \beq  \lim_{n\to\infty} j_n
P^{(\delta_n)}(\Lambda_{[j_n^2]})=\int_0^{\infty} du \
\frac{u}{\sqrt{2\pi }} \exp \Bigl(-\fracd{u^2}{2} - u \gamma -
\frac{\gamma^2 }{2} \Bigr), \feq which proves
Lemma~\ref{meander}-(i) in view of \eqref{duration}.
\\
$\mbox{}$
\\
(ii)~ First we will prove the convergence of finite-dimensional
distributions. It follows from \eqref{radon} that for any $m\in\nn,$ positive reals
$0<t_1<\cdots <t_m\leq 1,$  and Borel sets $A_k\subset \rr_+,$ $k=1,\ldots, m,$ \beqn \label{tight}
&& \qquad \qquad \qquad \qquad
P^{(\delta_n)}\Bigl( Y_n(t_k)\in A_k,~k=1,\ldots,m\bigr| \Lambda_{[j_n^2]}\Bigr) = \\
\nonumber
&& \fracd{\sum_{y\in J_n} \pp\Bigl(Y_n(t_k)\in A_k,~k=1,\ldots,m;X_{[j_n^2]}=2yj_n\bigl|
\Lambda_{[j_n^2]}\Bigr) \pp\bigl( \Lambda_{[j_n^2]}\bigr)
\fracd{(1-\delta_n^2)^{[j_n^2]/2}}{1-\delta_n}\Bigr(\frac{1-\delta_n}{1+\delta_n}\Bigr)^{yj_n}}
{P^{(\delta_n)}\bigl(\Lambda_{[j_n^2]}\bigr)}.
\feqn
Therefore, by the central limit theorem for
random walks conditioned to stay positive (see \cite{bolthausen, iglehart})
combined with the first part of the lemma and \eqref{local},
\beq
&&
\lim_{n\to\infty} P^{(\delta_n)}
\Bigl(Y_n(t_k)\in A_k,~k=1,\ldots,m\bigr| \Lambda_{[j_n^2]}\Bigr)
\\
&& \qquad =\sqrt{\fracd{2}{\pi}}\fracd{1}{\mn^{-\gamma}(\zeta
>1)}\int_0^\infty du \ \mmm^{(0)}(Y_{t_k}\in A_k,~k=1,\ldots,m;Y_1\in
du)
\exp\Bigr(-u\gamma-\fracd{\gamma^2}{2}\Bigr)\\
&& \qquad =\fracd{1}{\mn^{(-\gamma)}(\zeta >1)}\int_0^\infty du \
\mn^{(0)}(Y_{t_k}\in A_k,~k=1,\ldots,m;Y_1\in du)
\exp\Bigr(-u\gamma-\fracd{\gamma^2}{2}\Bigr)
\\
&& \qquad =\mmm^{(-\gamma)}(Y_{t_k}\in A_k,~k=1,\ldots,m). \feq
Next, tightness of the family of discrete distributions follows from
the corresponding result for the simple random walk available in
Section~3 of \cite{iglehart}, along with \eqref{tight}. This
completes the proof of Lemma~\ref{meander}-(ii).
\\
$\mbox{}$
\\
(iii)~ We use the second part of the lemma, along with the fact that
the process $(Y_n(t))_{t \ge 1}$ converges weakly in $C(\rr_+,\rr)$
to a Brownian motion with drift $-\gamma$ (see for instance
\cite[Theorem~II.3.2]{jacod}). The claim then follows immediately from
the Markov property (applied at time $t=1$) under
$\mn^{(-\gamma)}(~\cdot~|\zeta>1)$ (cf. \cite[Section~VI.48]{rw2}).
\end{proof}
\section{Supercritical Regime}
\label{sec:super} This section is devoted to the proof of Theorem
\ref{th:main} and is correspondingly divided into two parts. The
proof of the invariance principle for $X_n $ given in
Section~\ref{part1} uses a decomposition representing $X_n$ as a sum
of a martingale and a drift term. It is then shown that the drift
term is asymptotically small compared to the martingale, and that
the martingale satisfies the invariance principle. The criterion for
the equivalence of $P$ and $\pp$ is proved in Section~\ref{part} by
a reduction to a similar question for the law of the sequence of
independent variables $\tau_n$ defined in \eqref{tau}.
\subsection{Invariance principle for $X_n$}
\label{part1} The first part of the following proposition states
that $T_n/n^2$ converges in distribution, as $n\to\infty,$ to the
hitting time of level $1$ of the standard Brownian motion, a
non-degenerate stable random variable of index $1/2$. The second
part is required to evaluate both the variance of the martingale
term as well as the magnitude of the drift in decomposition
\eqref{eq:tauest2} below.
\begin{proposition}
\label{supera} Assume that $\lim_{n\to\infty} n \veps_n =0$. Then
\begin{enumerate}
\item For $\lambda \ge 0$, $\lim_{n\to\infty} E\bigl(e^{-\lambda
T_n/n^2}\bigr)=e^{-\sqrt{2\lambda}}$.
\item $\fracd{1}{n}\sum_{i=1}^n \veps_i \tau_i$
converges to zero in probability as $n\to\infty$.
\end{enumerate}
\end{proposition}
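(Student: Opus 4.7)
Both parts rest on the observation that, under $P$, the excursion durations $\tau_1,\tau_2,\ldots$ are independent, with $\tau_i$ having the law of the first return time to $0$ under $P^{(\veps_i)}$. This follows from the strong Markov property applied at the successive return times $T_k$, together with the fact that $\eta$ is constant throughout each excursion; consequently Lemma~\ref{lem:timoments} gives an explicit product formula for the Laplace transform of $T_n=\sum_{i=1}^n\tau_i$.

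For part~(i), the plan is a direct asymptotic expansion. Taking logarithms,
\[
\log E(e^{-\lambda T_n/n^2}) = \sum_{i=1}^n \log\frac{1-\sqrt{u_i}}{1-\veps_i}, \qquad u_i:=1-(1-\veps_i^2)e^{-2\lambda/n^2}=\veps_i^2+\frac{2\lambda}{n^2}+o\bigl(\veps_i^2+n^{-2}\bigr).
\]
Expanding the logarithm and rationalizing $\sqrt{u_i}-\veps_i=(u_i-\veps_i^2)/(\sqrt{u_i}+\veps_i)$, the summand is asymptotically $-\frac{\sqrt{2\lambda}}{n}\, g\!\bigl(n\veps_i/\sqrt{2\lambda}\bigr)$, where $g(x):=1/(x+\sqrt{x^2+1})$ is continuous, bounded by $1$, and satisfies $g(0)=1$. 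The desired conclusion then reduces to $\frac{1}{n}\sum_{i=1}^n g\bigl(n\veps_i/\sqrt{2\lambda}\bigr)\to 1$. The hypothesis $n\veps_n\to 0$ gives $\veps_i=o(1/i)$ as $i\to\infty$, so for every $\epsilon>0$ the arguments vanish uniformly on $i\in[\epsilon n,n]$, yielding a contribution of $1-\epsilon+o(1)$ to the average; the remaining $i<\epsilon n$ contribute at most $\epsilon$ since $g\leq 1$. Letting $\epsilon\downarrow 0$ finishes the argument. The main bookkeeping point is to ensure the error terms in the logarithm expansion (of size $O(\veps_i^2+n^{-2})$ per term) sum to $o(1)$.

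For part~(ii), the plan is to apply Chebyshev's inequality to the sum $\sum_{i=1}^n\veps_i\tau_i$ after a truncation. From Lemma~\ref{lem:timoments}, $E(\veps_i\tau_i)=1+\veps_i$ and $\mbox{Var}(\veps_i\tau_i)=\veps_i^{-1}-\veps_i$, so individual terms have potentially sizable variance. A naive second-moment bound would need $\sum_i\veps_i^{-1}=o(n^2)$, which does not follow from the hypothesis alone, so controlling the heavy tail of the $\tau_i$ is the main obstacle. My plan is to truncate each $\tau_i$ at a level calibrated by the exponential decay of the MGF at its singularity $\lambda_i^\ast\sim\veps_i^2/2$ visible from Lemma~\ref{lem:timoments}: the truncated variables admit a usable second-moment bound, while the tail estimate $P(\tau_i>k)\lesssim k^{-1/2}e^{-\veps_i^2 k/2}$ (extracted from the square-root singularity of the generating function) is summed by a union bound to handle the excess. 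The assumption $n\veps_n\to 0$ is precisely what is needed to make the two-part estimate close.
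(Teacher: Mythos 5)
Your opening observation (the $\tau_i$ are independent under $P$, with $\tau_i$ distributed as the first return time under $P^{(\veps_i)}$) is correct and is exactly what makes Lemma~\ref{lem:timoments} applicable termwise.

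For part (i) you take a genuinely different route from the paper. The paper sandwiches $E(e^{-\lambda T_n/n^2})$: from below by the simple-walk transform via the monotone coupling of Lemma~\ref{lem:monot}, and from above by comparing the excursions with indices $k\ge[\delta n]$ (where $\veps_k<\delta/n$) to a constant-drift $(\delta/n)$ walk, then letting $\delta\to0$. You instead expand the exact product from Lemma~\ref{lem:timoments} directly; your identification of the main term $-\frac{\sqrt{2\lambda}}{n}\,g(n\veps_i/\sqrt{2\lambda})$ is correct, and the splitting at $\epsilon n$ works because $n\veps_i\le \epsilon^{-1}i\veps_i\to0$ uniformly on $i\in[\epsilon n,n]$. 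One caveat on your "main bookkeeping point": per-term errors genuinely of size $O(\veps_i^2+n^{-2})$ would \emph{not} sum to $o(1)$ — they sum to $O\bigl(\sum_i\veps_i^2\bigr)+O(1/n)$, and $\sum_i\veps_i^2$ is a finite constant (by $\veps_i=o(1/i)$), not a vanishing one. The expansion must be done on the single logarithm $\log\bigl((1-\sqrt{u_i})/(1-\veps_i)\bigr)=\log\bigl(1-\tfrac{\sqrt{u_i}-\veps_i}{1-\veps_i}\bigr)$, where $\sqrt{u_i}-\veps_i=(1-\veps_i^2)(1-e^{-2\lambda/n^2})/(\sqrt{u_i}+\veps_i)=O(1/n)$ uniformly (note $\sup_i\veps_i<1$ since $\veps_i\in[0,1)$ and $\veps_i\to0$); then the $\veps_i^2$-contributions cancel and the per-term error is $O(n^{-2})$. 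With that fix, part (i) is a valid, self-contained alternative to the paper's coupling argument.

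Part (ii), however, has a genuine gap: the truncation scale is wrong. Truncating $\tau_i$ at a level calibrated to the MGF singularity $\lambda_i^\ast\sim\veps_i^2/2$, i.e.\ at scale $L\veps_i^{-2}$, cannot prove convergence to $0$, because the mean $E(\veps_i\tau_i)=1+\veps_i$ is carried precisely by excursions of duration of order $\veps_i^{-2}$: one has $E\bigl[\veps_i\tau_i;\tau_i\le L\veps_i^{-2}\bigr]$ of order $1$ (approaching $1+\veps_i$ as $L\to\infty$), so your truncated average concentrates near a constant of order one, not near zero. In addition, the union bound at that scale costs $\sum_{i\le n}P(\tau_i>L\veps_i^{-2})\asymp L^{-1/2}e^{-L/2}\sum_{i\le n}\veps_i$, and $\sum_{i\le n}\veps_i$ need not stay bounded under $n\veps_n\to0$ (e.g.\ $\veps_i=1/(i\log i)$). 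The scheme can be repaired by truncating at the diffusive horizon $K=An^2$: there the polynomial tail factor alone gives $\sum_{i\le n}P(\tau_i>An^2)=O(A^{-1/2})$, for $i\ge\epsilon n$ the truncated means satisfy $\veps_iE[\tau_i\wedge An^2]\lesssim\sqrt{A}\,n\veps_i\to0$, and the block $i<\epsilon n$ must be treated separately by a first-moment bound (it contributes at most $2\epsilon$ to the mean of the average, since $\veps_iE(\tau_i)=1+\veps_i\le2$). This repaired version is essentially the paper's much shorter argument, which avoids truncation altogether: it bounds $\frac1n\sum_{k\ge[\delta n]}\veps_k\tau_k\le\delta T_n/n^2$ using $\veps_k<\delta/n$, invokes tightness of $T_n/n^2$ from part (i), and bounds the expectation of the initial block by $2\delta$. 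As written, your plan misses both the correct scale and the need to handle the early indices (where $n\veps_i$ is not small) separately, and the final sentence defers precisely the step where the work lies.
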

\begin{proof}~
\\
(i)~It is well-known (see for instance \cite[p. 394]{durrett}) that
\beq \label{eq:bmtn} \lim_{n\to\infty} \ee\bigl( e^{-\lambda
T_n/n^2}\bigr)=\lim_{n\to\infty} \bigl( \ee \bigl( e^{-\lambda
\tau_1/n^2}\bigr)\bigr)^n = e^{-\sqrt{2\lambda}},~\lambda \ge 0.
\feq By Lemma~\ref{lem:monot}-(i), $ E \bigl(e^{-\lambda
T_n/n^2}\bigr) \ge \ee\bigl( e^{-\lambda T_n/n^2}\bigr)$. Hence $
\liminf_{n\to\infty} E \bigl( e^{-\lambda T_n/n^2}\bigr) \ge
e^{-\sqrt{2\lambda}}$. It remains to show that $\displaystyle
\limsup_{n\to\infty} E\bigl( e^{-\lambda T_n/n^2}\bigr)\le
e^{-\sqrt{2\lambda}}$.
\par
Let $\delta\in (0,1)$. Clearly,
\begin{equation}
\label{eq:deltastep1}
E \bigl( e^{-\lambda T_n/n^2} \bigr)\le \prod_{k=[\delta n]}^n E\bigl( e^{-\lambda \tau_k/n^2}\bigr) .
\end{equation}
Thanks to Assumption 2.4, we can take $n$ large enough so that
$k\veps_k \le \delta^2/2$ for all $k\geq [\delta n].$ Then, for
$k\ge [\delta n],$ \beqn \label{esti} \veps_k \le
\frac{\delta^2}{2k}\le \frac{\delta^2}{2[\delta
n]}<\frac{\delta}{n}.\feqn Using Lemma~\ref{lem:monot} to estimate
the product in the righthand side of \eqref{eq:deltastep1}, we get
\begin{equation}
\label{eq:deltastep2} E\bigl( e^{-\lambda T_n/n^2}\bigr) \le \Bigl( E^{(\delta/n)}
\bigl( e^{-\lambda\tau_1/n^2 })\Bigr) ^{(1-\delta)n}.
\end{equation}
Next, we observe that, using Lemma~\ref{lem:timoments},
\begin{align}
\nonumber
E^{(\delta/n)}\bigl(  e^{-\lambda\tau_1/n^2 }\bigr) &
\underset{\mbox{  }}{=}
\frac {1-\sqrt{1-(1-\delta^2/n^2)e^{-2\lambda/n^2}}} {1-\delta/n}
\leq \frac{1-\sqrt{1-e^{-(\delta^2+2\lambda)/n^2}}}{1-\delta/n}\\
\label{eq:tauest}
&=
\ee\bigl( e^{-(\delta^2/2+\lambda) \tau_1/n^2}\bigr) (1-\delta/n)^{-1}.
\end{align}
Hence,
\begin{align*}
\limsup_{n\to\infty} E \bigl(e^{-\lambda T_n/n^2}\bigr) &\underset{
\eqref{eq:deltastep2},~\eqref{eq:tauest}}{\le} \limsup_{n\to\infty}
\bigl(\ee\bigl( e^{-(\delta^2/2+\lambda)
\tau_1 /n^2}\bigr)\bigr)^{[(1-\delta)n]}  (1-\delta/n)^{-(1-\delta)n} \\
&\underset{\eqref{eq:bmtn}}{=}   e^{- (1-\delta) \sqrt{\delta^2 + 2\lambda} } e^{\delta (1-\delta)}.
\end{align*}
Letting $\delta \to 0$ completes the proof of
Proposition~\ref{supera}-(i).
\\
$\mbox{}$
\\
(ii)~Fix $\delta \in (0,1)$ and let $S_1 = \frac 1n \sum_{k=1}^{[\delta
n]-1}\veps_k \tau_k,$ $S_2=\frac 1n \sum_{k=[\delta n]}^n \veps_k
\tau_k$. As before, we assume that $n$ is large enough, so that
\eqref{esti} holds true for all $k\geq [\delta n]$. In particular,
$S_2 \le {\delta} T_n/n^2$. Next,
$$ P( S_1+S_2 \ge 2\sqrt{\delta}) \le P(S_1 \ge \sqrt{\delta})+
P(S_2 \ge \sqrt{\delta}) \le \delta^{-1/2} E (S_1) + P(T_n/n^2 \ge
\delta^{-1/2}).$$ By Lemma~\ref{lem:timoments}, $E (S_1) \le \frac
1n \sum_{k=1}^{[\delta n]}(1+\veps_k)\le 2 \delta$. Therefore,
$$ P(S_1 +S_2 \ge 2\sqrt{\delta})\le 2\sqrt{\delta}+P(T_n/n^2 \ge \delta^{-1/2}).$$
By part (i), the second term goes to $0$ as
$n\to\infty.$ Letting $\delta$ go to $0$ finishes the proof.
\end{proof}
We are now in position to give the proof of the first part of
Theorem~\ref{th:main}.
\begin{proof}[Proof of Theorem \ref{th:main}-(i)]
Recall $\calf_n=\sigma(X_1,\ldots,X_n),$ $d_n=\text{\rm
sign}(X_n)\veps_{\eta_n},$ and identity~\eqref{din_dyn}. Let:
\begin{equation}
\label{eq:tauest2} H_n = X_n - \ol{D}_n\quad \mbox{with}\quad
\ol{D}_n :=\sum_{k=0}^{n-1} d_k.
\end{equation}
It follows from \eqref{din_dyn} that $H:=(H_n,\calf_n)_{n\ge 0}$ is
a martingale.
\par Let $S_n = \sum_{k=1}^{\eta_n} \veps_k \tau_k$. We next prove
the following estimate:
\begin{equation}
\label{eq:Dnsmall} \lim_{n\to\infty} S_n /\sqrt{n} =0,~\text{in
probability.}
\end{equation}
Let $\delta>0$ and $m>0$. Then,
$$ \{S_n >\delta \sqrt{n}\}\subseteq \{\eta_n\ge [\sqrt{m n}] \}\cup
\{\sum_{k=1}^{[\sqrt{mn}]} \veps_k \tau_k>\delta \sqrt{n} \}.$$
Hence, by Proposition \ref{supera}-(ii), $\limsup_{n\to\infty} P(S_n
\ge \delta \sqrt{n}) \le \limsup_{n\to\infty} P(\eta_n \ge
[\sqrt{mn}])$. However, $\{\eta_n\ge [ \sqrt{mn}]\}=
\{T_{[\sqrt{mn}]}\le n\}.$ Therefore, $$\limsup_{n\to\infty} P(S_n
\ge \delta \sqrt{n}) \le \limsup_{k\to\infty} P(T_k/k^2 \leq 2/m).$$
By letting $m\to\infty$, and since $\delta$ is arbitrary,
\eqref{eq:Dnsmall} follows Proposition~\ref{supera}-(i).\par We next
apply the martingale central limit theorem \cite[pp. 412]{durrett}
to show that $H$ satisfies the invariance principle. Let \beq
V_n=\sum_{k=1}^{n} E\bigl( (H_{k+1}-H_k)^2\bigl|\calf_k\bigr)
=\sum_{k=1}^{n} E\bigl( (X_{k+1}-X_k-d_k)^2\bigl| \calf_k\bigr),
\feq Due to the fact that $H$ has bounded increments, it is enough
to verify that $\lim_{n\to\infty} V_n/n=1$ in probability. Note that
by \eqref{din_dyn} $$V_n= \sum_{k=1}^{n} \bigl( 1 - 2d_k^2 +
d_k^2\bigr)=n-\sum_{k=1}^{n} d_k^2,$$ and $\sum_{k=1}^n d_k^2 \le
\sum_{k=1}^n |d_k|\le S_n.$ It follows from \eqref{eq:Dnsmall} that
$\lim_{n\to\infty} \sum_{k=1}^n d_k^2/n=0$ in probability, and, consequently,
the invariance principle holds for $H.$
\par
In order to complete the proof, by \cite[Theorem 2.1, p.11]{bill},
it suffices to show that for all $m>0$ and any continuous function
$\varphi:C[0,m]\to \rr$, we have $\lim_{n\to\infty} E\bigl(
\varphi(\cali^X_n) \bigr)=$ $\lim_{n\to\infty} E\bigl(
\varphi(\cali^{H,m}_n)\bigr),$ where $\cali^{H,m}_n(t)$ coincides
with $\cali^H_n(t)$ on $[0,m].$ Note that the limit in the righthand
side exists due to the invariance principle for $H$. Since $\varphi$
is bounded, uniformly continuous, this will follow once we prove
that
$$K_n :=\max_{t\in [0,m]}
\bigl|\cali^X_n(t)-\cali^{H,m}_n(t)\bigr|\underset{n\to\infty}{\to}0,~\mbox{
in }P\mbox{-probability}.$$ By its definition in \eqref{esn},
$\cali^X_n(t)$ (resp. $\cali^{H,m}_n(t)$) is a convex combination of
$X_{[nt]}$ and $X_{[nt]+1}$ (resp. $H_{[nt]}$ and $H_{[nt]+1}$).
Since $|X_{[nt]}-X_{[nt]+1}|=1$ and $|H_{[nt]}-H_{[nt]+1}|\le 2$, it
follows that $$K_n\leq \max_{t\in[0,m]} \frac{
|X_{[nt]}-H_{[nt]}|+3}{\sqrt{n}}\le \max_{t\in [0,m]}
\frac{S_{[nt]}+3}{\sqrt{n}}\le
\frac{S_{nm}+3}{\sqrt{n}}\underset{n\to\infty}{\to}0\mbox{ in
}P\mbox{-probability},$$ where the limit in the righthand side is
due to \eqref{eq:Dnsmall}.
\end{proof}
\subsection{Criterion for the equivalence of $P$ and $\pp$}
\label{part}
\begin{proof}[Proof of Theorem~\ref{th:main}-(ii)]
Recall $\calf_n=\sigma(X_0,\dots,X_n)$ and let
$\calg_n=\calf_{T_n},$ the $\sigma$-algebra generated by the paths
of $X$ up to time $T_n$. Let $\calf = \sigma(\cup_{n\ge 0} \calf_n)$
and $\calg = \sigma(\cup_{n\ge 0} \calg_n).$
\par
Under both $P$ and $\pp,$ $\lim_{n\to\infty}T_n=\infty$ with
probability one and hence $\calg = \calf$ up to null-measure sets.
Therefore, the measures $P$ and $\pp$ are equivalent if $P|_{\calg}$
and $\pp|_{\calg}$, their restrictions to $\calg,$ are equivalent.
\par
Let $\gamma:=(\gamma_0,\gamma_1,\dots)$ be a random walk path
starting from the origin. That is, $\gamma_0=0$ and
$|\gamma_{n+1}-\gamma_n|=1$ for all $n$. Let $T_0(\gamma)=0$ and,
for $n\geq 1,$ \beqn \label{tigamma}
T_n(\gamma)=\min\{i>T_{n-1}(\gamma): X_i=0\}\quad \mbox{and}\quad
\tau_n(\gamma)=T_n(\gamma)-T_{n-1}(\gamma).\feqn Counting the number
of the steps to the left and to the right during each excursion of
the random walk from zero, we obtain
\begin{align}
\nonumber
P(X_k=\gamma_k,~\forall k\le T_n)&= \prod_{k=1}^{n}
\frac 12 \Bigl(\frac 12 (1+\veps_k)\Bigr)^{\tau_k(\gamma)/2}
\Bigl(\frac 12 (1-\veps_k)\Bigr)^{\tau_k(\gamma)/2-1}\\
\label{counting}
&=2^{-T_n(\gamma)} \prod_{k=1}^n\frac{
(1-\veps_k^2)^{\tau_k(\gamma)/2}}{1-\veps_k},
\end{align}
where the difference between the powers in the righthand side of the
first line is due to the fact that from $0$, the probability of
going either to the right or to the left is $\frac 12$. On the other
hand, $\pp(X_k=\gamma_k,~\forall k\le T_n)=2^{-T_n(\gamma)}$.
\par
For $n\geq 1,$ let \beqn \label{rn-tin}
F_n(\gamma):=\frac{P(X_k=\gamma_k,~\forall k\le
n)}{\pp(X_k=\gamma_k,~\forall k\le n)}=\prod_{k=1}^{n} \frac
{(1-\veps_k^2)^{\tau_k(\gamma)/2}}{1-\veps_k},\feqn and set
$F_{\infty} = \limsup_{n\to\infty} F_n.$ Note that $F_n\in \calg_n$
and hence $F_{\infty}\in\calg$. By \cite[Theorem~3.3,
p.~242]{durrett},
\begin{align*}
&P|_{\calg}\sim\pp|_{\calg}\mbox{ if and only if
}F_{\infty}<\infty,~ P|_{\calg}\mbox{-almost
surely};\\&P|_{\calg}\perp \pp|_{\calg}\text{ if and only if
}F_{\infty}=\infty, ~P|_{\calg}\mbox{-almost surely}.
\end{align*}
Identity \eqref{rn-tin} with $n=1$ shows that distribution of
$\tau_k$ under $P$ is absolutely continuous with respect to its
distribution under $\pp$, and the corresponding Radon-Nikodym
derivative is $(1-\veps_k)^{-1}(1-\veps_k^2)^{\tau_k/2}$. Since
$(\tau_k)_{k\ge 1}$ is a sequence of independent random variables
under both measures, Kakutani's dichotomy theorem (see
\cite[p.~244]{durrett}) implies that
\begin{align*}
& F_{\infty}<\infty\mbox{ or }=\infty,~ P|_{\calg}-\mbox{a.s.},
~\mbox{according to whether}~\lim_{n\to\infty} \ee\bigl(
\sqrt{F_n}\bigr)>0~\mbox{or}~= 0.
\end{align*}
We have:
\begin{align*}
\ee\bigl( \sqrt{F_n}\bigr)& =  \prod_{k=1}^n \ee\Bigl(
\frac{(1-\veps_k^2)^{\tau_k/4}}{\sqrt{1-\veps_k}} \Bigr)
\underset{\mbox{Lemma } \ref{lem:timoments}}{=} \prod_{k=1}^n
\frac{1-\sqrt{1-(1-\veps_k^2)^{1/2}}}{\sqrt{1-\veps_k}}.
\end{align*}
Choose any $\delta\in (0,\sqrt{1/2}-1/2).$ Since $\lim_{k\to\infty} \veps_k=0,$ we have for all $k$ large enough,
\beq
1-\veps_k\sqrt{1/2+\delta}\leq 1-\sqrt{1-(1-\veps_k^2)^{1/2}} \leq 1-\veps_k \sqrt{1/2},
\feq
and
\beq
1-(1/2+\delta)\veps_k\leq \sqrt{1-\veps_k}\leq 1-\veps_k/2.
\feq
In particular, $\lim_{n\to\infty} \ee\bigl( \sqrt{F_n}\bigr)>0$ if
and only if $\sum_{k=1}^{\infty} \veps_k <\infty$.
\end{proof}
\section{Subcritical Regime}
\label{sec:sub}
The goal of this section is to prove the results presented in
Section~\ref{subs1}. In Section~\ref{sec:tnsub} we obtain auxiliary
limit theorems and large deviations estimates for $\eta_n,$ the
occupation time at the origin.
We first prove corresponding results for $T_n,$ and then use the correspondence
between $(T_n)_{n\geq 1}$ and $(\eta_n)_{n\geq 1}.$
Section~\ref{subs:walk} contains the proof of the limit theorem for $X_n$
stated in Theorem~\ref{walk}. In Section~\ref{last} we prove the more refined result
given by Theorem~\ref{lastvislastexc}. Finally, Theorem~\ref{supt} and
Corollary~\ref{cor:supt}, describing the asymptotic behavior of the range
of the random walk, are proved in Section~\ref{supr}.
\subsection{Limit theorems and large deviations estimates
for $T_n$ and $\eta_n$} \label{sec:tnsub}
Let $N(0,\sigma^2)$ denote a normal random variable
with zero mean and variance $\sigma^2.$ We write $X_n\Rightarrow Y$ when
a sequence of random variables $(X_n)_{n\geq 1}$ converges to random variable
$Y$ in distribution. Let \beqn
\label{gin} g_n:=\sqrt{E(T_n^2)-(E(T_n))^2}=\Bigr[{\sum}_{i=1}^n
(\veps_i^{-3}- \veps_i^{-1}) \Bigl]^{1/2},
\feqn
where the first equality is the definition of $g_n$ while the second one follows from
Lemma~\ref{lem:timoments}.
\par
First, we prove the following limit theorem for the sequence $(T_n)_{n\geq 1}.$
\begin{proposition}
\label{tsuper} Let Assumption~\ref{maina} hold.
 Then
\beq
\frac {T_n -a_n}{g_n} \Rightarrow N(0,1),\mbox{ as }n\to\infty.
\feq
In particular, $\lim_{n\to\infty} T_n /a_n =1,$ where the convergence is in probability.
\end{proposition}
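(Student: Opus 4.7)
The plan is to apply Lyapunov's central limit theorem to $T_n=\sum_{i=1}^n \tau_i$, taking advantage of the fact that under $P$ the summands $\tau_i$ are independent.

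First I would record this independence: by the strong Markov property at the successive returns to $0,$ the $i$-th excursion of $X$ away from the origin is independent of all previous excursions, and because $\eta_k=i$ throughout that excursion, it has the same law as the first excursion under $P^{(\veps_i)}.$ Thus $(\tau_i)_{i\ge 1}$ is an independent sequence under $P,$ and Lemma~\ref{lem:timoments} gives
\beq
E(\tau_i)=1+\veps_i^{-1},\qquad \mathrm{Var}(\tau_i)=E(\tau_i^2)-(E(\tau_i))^2=\veps_i^{-3}-\veps_i^{-1}.
\feq
Summation yields $E(T_n)=a_n$ and $\mathrm{Var}(T_n)=g_n^2,$ consistent with \eqref{aen} and \eqref{gin}.

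The core of the argument is the verification of Lyapunov's condition
\beq
\lim_{n\to\infty}\fracd{1}{g_n^3}\sum_{i=1}^n E\bigl|\tau_i-E(\tau_i)\bigr|^3=0.
\feq
Using $E|\tau_i-E(\tau_i)|^3\le 8\bigl(E(\tau_i^3)+(E(\tau_i))^3\bigr)$ and Lemma~\ref{lem:timoments}, together with $\veps_i<1,$ one gets $E|\tau_i-E(\tau_i)|^3=O(\veps_i^{-5})$ uniformly in $i.$ Since $(\veps_i^{-3})\in\mbox{RV}(3\alpha)$ and $(\veps_i^{-5})\in\mbox{RV}(5\alpha),$ Theorem~\ref{th:rv}-(i) gives
\beq
g_n^2\sim \fracd{n\veps_n^{-3}}{1+3\alpha}\qquad\mbox{and}\qquad \sum_{i=1}^n \veps_i^{-5}\sim \fracd{n\veps_n^{-5}}{1+5\alpha},
\feq
so the quantity to control is of order $(n\veps_n)^{-1/2}.$ Assumption~\ref{maina} guarantees $n\veps_n\to\infty$ in all three cases: it is automatic for $\alpha\in(0,1),$ it is built into the assumption when $\alpha=1,$ and when $\alpha=0$ it follows from the standard fact that a slowly varying sequence tending to $0$ satisfies $\veps_n=n^{o(1)}.$ Lyapunov's condition thus holds, so $(T_n-a_n)/g_n\Rightarrow N(0,1).$

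For the final ``in particular'' assertion I would write $T_n/a_n=1+(g_n/a_n)\cdot (T_n-a_n)/g_n$ and verify $g_n/a_n\to 0.$ Corollary~\ref{sequen}-(i) gives $a_n\sim n\veps_n^{-1}/(1+\alpha),$ which combined with the above asymptotic for $g_n$ yields $g_n/a_n=O\bigl((n\veps_n)^{-1/2}\bigr)\to 0,$ and Slutsky's theorem then delivers $T_n/a_n\to 1$ in probability. The only non-mechanical step is the application of Theorem~\ref{th:rv}-(i) to extract the precise asymptotics of $g_n$ and $\sum_i\veps_i^{-5}$ from the regular variation of $\veps$; once these are obtained both the CLT and its scaling consequence follow essentially by bookkeeping.
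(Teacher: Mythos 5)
Your proposal is correct and follows essentially the same route as the paper: Lyapunov's CLT applied to the independent excursion lengths $\tau_i$, a third-moment bound of order $\veps_i^{-5}$ from Lemma~\ref{lem:timoments}, the asymptotics $g_n^2\sim n\veps_n^{-3}/(1+3\alpha)$ and $\sum_i\veps_i^{-5}\sim n\veps_n^{-5}/(1+5\alpha)$ via Theorem~\ref{th:rv}-(i), and $n\veps_n\to\infty$ under Assumption~\ref{maina}. The concluding step is likewise identical in substance, since showing $g_n/a_n\to 0$ is the paper's $a_n/g_n\to\infty$ combined with Slutsky.
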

Next, we derive from this proposition the following limit
theorem for $(\eta_n)_{n\ge 1}.$
\begin{proposition}
\label{cor} Let Assumption~\ref{maina} hold. Then \beq \frac{\eta_n
- c_n}{\sqrt{n} }\Rightarrow N\Bigl(0,
\frac{1+\alpha}{1+3\alpha}\Bigr).
\feq
In particular, $\lim_{n\to\infty} \eta_n /c_n =1,$ where the convergence is in probability.
\end{proposition}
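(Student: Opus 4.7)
The natural route is to derive Proposition \ref{cor} from Proposition \ref{tsuper} via the duality
\[
\{\eta_n < k\}=\{T_k > n\},\qquad k\in\nn,
\]
which holds because $\eta_n$ counts the returns to $0$ by time $n$. Fix $x\in\rr$ and set $k_n:=\lceil c_n+x\sqrt{n}\rceil$. Then
\[
P\bigl((\eta_n-c_n)/\sqrt{n}<x\bigr)=P\bigl(T_{k_n}>n\bigr)
=P\!\left(\frac{T_{k_n}-a_{k_n}}{g_{k_n}}>\frac{n-a_{k_n}}{g_{k_n}}\right),
\]
and Proposition \ref{tsuper} applied along the sequence $k_n$ will give the result once one identifies the limit of the deterministic right-hand side.

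The main task is therefore to show
\[
\lim_{n\to\infty}\frac{n-a_{k_n}}{g_{k_n}}=-x\sqrt{\fracd{1+3\alpha}{1+\alpha}}.
\]
For the numerator I would write, using Lemma \ref{lem:timoments} and $a_k=\sum_{i=1}^k(1+\veps_i^{-1})$,
\[
a_{k_n}-a_{c_n}=\sum_{i=c_n+1}^{k_n}\bigl(1+\veps_i^{-1}\bigr),
\]
and use that $\veps\in\mbox{RV}(-\alpha)$ together with $k_n-c_n\sim x\sqrt{n}=o(c_n)$ (this last fact follows from Corollary \ref{sequen}(ii), noting that at the boundary $\alpha=1$ the supplementary assumption $n\veps_n/\log n\to\infty$ forces the slowly varying part of $c_n$ to diverge, so $\sqrt n/c_n\to 0$ in all cases). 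By Theorem \ref{th:rv}(ii), $\veps_i^{-1}\sim\veps_{c_n}^{-1}=b_n$ uniformly for $i$ between $c_n$ and $k_n$, so $a_{k_n}-a_{c_n}\sim x\sqrt{n}\,b_n$. The error $|a_{c_n}-n|$ is bounded by $1+\veps_{c_n}^{-1}=O(b_n)$, and this is negligible compared to $\sqrt{n}\,b_n$ since $\sqrt{n}\to\infty$. Consequently $n-a_{k_n}=-x\sqrt{n}\,b_n\,(1+o(1))$.

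For the denominator, formula \eqref{gin} yields $g_n^2=\sum_{i=1}^n(\veps_i^{-3}-\veps_i^{-1})$. Since $\veps_i^{-3}\in\mbox{RV}(3\alpha)$ dominates $\veps_i^{-1}\in\mbox{RV}(\alpha)$ in the partial-sum asymptotics, Theorem \ref{th:rv}(i) gives
\[
g_n^2\sim\fracd{n\,\veps_n^{-3}}{1+3\alpha}\qquad\text{as }n\to\infty,
\]
so $g_{k_n}\sim g_{c_n}\sim\sqrt{c_n/(1+3\alpha)}\,b_n^{3/2}$ by regular variation of $g$ and the fact that $k_n/c_n\to 1$. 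Combining the two estimates,
\[
\frac{n-a_{k_n}}{g_{k_n}}\sim\frac{-x\sqrt{n}\,b_n}{\sqrt{c_n/(1+3\alpha)}\,b_n^{3/2}}
=-x\sqrt{\fracd{(1+3\alpha)\,n}{c_n\,b_n}}\longrightarrow -x\sqrt{\fracd{1+3\alpha}{1+\alpha}},
\]
where the last limit uses $c_nb_n\sim(1+\alpha)n$ from Corollary \ref{sequen}(iii). Together with Proposition \ref{tsuper} this yields $P((\eta_n-c_n)/\sqrt{n}<x)\to \Phi\bigl(x\sqrt{(1+3\alpha)/(1+\alpha)}\bigr)$, which is exactly the claim, and convergence in probability of $\eta_n/c_n$ to $1$ is immediate from $\sqrt{n}/c_n\to 0$. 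The main delicate point is the uniform control of $\veps_i^{-1}$ on the interval $(c_n,k_n]$, especially when $\alpha=1$ where one has to lean on the extra hypothesis $n\veps_n/\log n\to\infty$ to ensure $\sqrt{n}=o(c_n)$.
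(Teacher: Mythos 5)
Your proposal is correct and follows essentially the same route as the paper: invert the event via $\{\eta_n\le m\}=\{T_m>n\}$, apply Proposition~\ref{tsuper} along $m\approx c_n+x\sqrt n$, and evaluate the deterministic threshold $(n-a_{k_n})/g_{k_n}$ by regular variation (Theorem~\ref{th:rv}, Corollary~\ref{sequen}), arriving at the same limit $-x\sqrt{(1+3\alpha)/(1+\alpha)}$. The only cosmetic differences are a harmless off-by-one in the duality identity and that the paper obtains $\sqrt n=o(c_n)$ directly from Corollary~\ref{sequen}-(iv) rather than re-deriving it from the $\alpha=1$ hypothesis as you sketch.
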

Finally, we complement the above limit results by the following large deviation estimates.
\begin{proposition}
\label{ldp} Let Assumption~\ref{maina} hold. Then, for $x>0$, \beq
\lim_{n\to\infty} \frac{1}{n\veps_n} \log P\Bigl(
\Bigl|\frac{T_n}{a_n}-1\Bigr|>x\Bigr)<0. \feq
\end{proposition}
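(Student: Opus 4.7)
The approach is a classical Chernoff-type bound. The two key inputs are: (a) under $P$, the excursion durations $(\tau_k)_{k \ge 1}$ are independent (since $\eta_n = k$ is constant throughout the $k$-th excursion, $\tau_k$ has the same law as the first return time to $0$ of an oscillating random walk with constant drift $\veps_k$); and (b) the explicit formula for $E(e^{\lambda\tau_k})$ from Lemma~\ref{lem:timoments}. The heuristic choice of tilt comes from the Gaussian CLT: by Corollary~\ref{sequen}(i), $a_n \sim n\veps_n^{-1}/(1+\alpha)$, and by Theorem~\ref{th:rv}(i) applied to $(\veps_k^{-3})_{k\ge 1} \in \mbox{RV}(3\alpha)$, $g_n^2 \sim n\veps_n^{-3}/(1+3\alpha)$, so one expects $P(|T_n/a_n-1|>x) \lesssim \exp(-c x^2 a_n^2/g_n^2)$ to be of order $\exp(-c x^2 n\veps_n)$, matching the target rate.

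Treat the upper tail first. For $\lambda > 0$ in the common MGF domain $\lambda < -\tfrac{1}{2}\log(1-\veps_k^2)$ (which holds for all $k \le n$ provided $\lambda$ is small enough), exponential Chebyshev gives
\[
\log P(T_n > (1+x)a_n) \le -\lambda(1+x)a_n + \sum_{k=1}^n \psi_k(\lambda),
\]
where $\psi_k(\lambda) = \log\bigl[(1-\sqrt{1-(1-\veps_k^2)e^{2\lambda}})/(1-\veps_k)\bigr]$. I would pick the tilt at the natural scale $\lambda = t\veps_n^2$ with $t > 0$ small, to be optimized, and Taylor-expand $\psi_k$ to second order around $0$ using $\psi_k(0)=0$, $\psi_k'(0) = 1+\veps_k^{-1}$, and $\psi_k''(0) = \veps_k^{-3}-\veps_k^{-1}$ from Lemma~\ref{lem:timoments}. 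The crucial quantitative input is a uniform bound $\sup_{0\le s\le t\veps_n^2}\psi_k''(s) \le C\veps_k^{-3}$ for all $k \le n$, which can be extracted by direct calculus from the explicit formula for $\psi_k$ provided $t$ is small enough that the tilt stays strictly below $\tfrac{1}{2}\log(1-\veps_k^2)^{-1}$ for every $k \le n$. Integrating twice and summing then yields $\sum_k \psi_k(\lambda) \le \lambda a_n + \tfrac{C}{2}\lambda^2 g_n^2$, so that
\[
\log P(T_n > (1+x)a_n) \le n\veps_n\Bigl[-\tfrac{tx}{1+\alpha} + \tfrac{Ct^2}{2(1+3\alpha)}\Bigr] + o(n\veps_n).
\]
The bracketed expression is negative for $t$ sufficiently small as a function of $x$ and $\alpha$, giving the desired upper tail bound.

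The lower tail $P(T_n < (1-x)a_n)$ is handled by the symmetric argument with $\lambda < 0$; here the MGF is finite for every $\lambda \le 0$, so there is no domain constraint and the estimate is if anything simpler. The main obstacle I expect is the admissibility of the tilt in the upper tail, which requires $\min_{k\le n}\veps_k$ to be comparable to $\veps_n$. For $\alpha > 0$, Theorem~\ref{th:rv}(ii) applied to $\veps^{-1} \in \mbox{RV}(\alpha)$ gives $\min_{\delta n \le k \le n}\veps_k \sim \veps_n$ for any fixed $\delta \in (0,1)$, and the residual $k < \delta n$ is easily absorbed. For $\alpha = 0$ this minimum may be strictly smaller than $\veps_n$, so one must split the sum at $k = \delta n$: the contribution of $k \le \delta n$ to both $a_n$ and $g_n^2$ is negligible as $\delta \to 0$ (by Theorem~\ref{th:rv}(i)), so one can afford a separate, cruder Chernoff bound on that initial segment without affecting the exponential rate.
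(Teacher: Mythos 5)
Your Chernoff-with-small-tilt argument is essentially sound as an \emph{upper} bound: the tilt $\lambda=t\veps_n^2$ is admissible for all $k\le n$ because $\rho_n:=\min_{k\le n}\veps_k\sim\veps_n$ (the paper uses exactly this quantity, and tilts by $\lambda\rho_n^2$), the uniform bound $\psi_k''(s)\le C\veps_k^{-3}$ for $0\le s\le t\veps_k^2$ with $t<1/2$ fixed does follow from the explicit generating function since the tilt stays a fixed fraction inside the singularity at $-\tfrac12\log(1-\veps_k^2)\approx\veps_k^2/2$, and summing the second-order Taylor bound together with $a_n\sim n\veps_n^{-1}/(1+\alpha)$ and $\sum_{k\le n}\veps_k^{-3}\sim n\veps_n^{-3}/(1+3\alpha)$ gives a strictly negative exponential rate for both tails. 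This is a more elementary, perturbative route than the paper's, which instead computes the \emph{exact} scaled cumulant limit $\lim_n\frac{1}{n\veps_n}\log E\bigl(e^{\lambda\rho_n^2T_n}\bigr)=\Lambda(\lambda)=\int_0^1\bigl(x^{-\alpha}-\sqrt{x^{-2\alpha}-2\lambda}\bigr)dx$ for all $\lambda<\tfrac12$ (via a Riemann-sum argument after discarding the initial $[\delta n]$ block, much like your splitting step) and then optimizes $J_z(\lambda)=\Lambda(\lambda)-\lambda z/(1+\alpha)$ over the tilt; your quadratic majorization buys simplicity at the cost of a non-sharp constant, which would still be enough for the downstream Corollaries.

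The genuine gap is that the proposition asserts the \emph{existence} of the limit (the paper states explicitly that this is part of the claim), and your argument only yields $\limsup_{n\to\infty}\frac{1}{n\veps_n}\log P\bigl(|T_n/a_n-1|>x\bigr)<0$; nothing in a pure upper-bound Chernoff computation with a suboptimal tilt controls the $\liminf$ from below. The paper closes this by observing that $\Lambda$ is strictly convex and steep ($\Lambda'(\lambda)\to\infty$ as $\lambda\uparrow\tfrac12$) and invoking the G\"artner--Ellis theorem, which supplies the matching exponential lower bound and turns the Chebyshev upper limits into limits. To repair your proposal you would either have to weaken the statement to a $\limsup$, or upgrade your expansion to the full limit of the scaled log-moment generating function (your $\delta n$-splitting is precisely the right first step for that) and then apply G\"artner--Ellis as the paper does.
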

\begin{corollary}
\label{ldpeta} Let Assumption~\ref{maina} hold. Then, for $x > 0$,
\beq \lim_{n\to\infty} \frac{b_n^2}{n} \log
P\Bigl(\Bigl|\frac{\eta_n}{c_n}-1\Bigr|>x\Bigr)<0. \feq
\end{corollary}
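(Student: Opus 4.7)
The plan is to reduce Corollary~\ref{ldpeta} to Proposition~\ref{ldp} via the elementary duality between the return times and the occupation time at the origin, and then to convert the $T_n$-scale into the $\eta_n$-scale using the regular variation relations collected in Corollary~\ref{sequen}.

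\textbf{Step 1: Duality.} Since $\eta_0=1$ under $P,$ for every $k \in \nn,$ the event $\{\eta_n > k\}$ coincides with $\{T_k \le n\},$ and consequently $\{\eta_n \le k\}=\{T_k > n\}.$ Fix $x>0,$ and set $k^+_n=\lceil (1+x)c_n\rceil$ and $k^-_n=\lfloor (1-x)c_n\rfloor$ (the lower tail is vacuous when $x\ge 1,$ so for that part we may assume $x\in(0,1)$). Then
\beq
 P\bigl(\eta_n > (1+x)c_n\bigr) = P\bigl(T_{k^+_n}\le n\bigr),\qquad P\bigl(\eta_n < (1-x)c_n\bigr) \le P\bigl(T_{k^-_n}> n\bigr).
\feq

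\textbf{Step 2: Upgrading the deviation to an $a_k$-scale deviation.} By Corollary~\ref{sequen}-(i), the sequence $(a_k)_{k\ge 1}$ is regularly varying with index $1+\alpha.$ Combined with $a_{c_n}\sim n$ (which follows from the definition of $c_n$ together with $a_{c_n}/a_{c_n-1}\to 1$), regular variation gives
\beq
\fracd{a_{k^+_n}}{n}\longrightarrow (1+x)^{1+\alpha},\qquad \fracd{a_{k^-_n}}{n}\longrightarrow (1-x)^{1+\alpha},
\feq
as $n\to\infty.$ Setting $\delta^+=\tfrac12\bigl(1-(1+x)^{-(1+\alpha)}\bigr)>0$ and $\delta^-=\tfrac12\bigl((1-x)^{-(1+\alpha)}-1\bigr)>0,$ we obtain for all $n$ large,
\beq
\bigl\{T_{k^+_n}\le n\bigr\}\subseteq \Bigl\{\Bigl|\fracd{T_{k^+_n}}{a_{k^+_n}}-1\Bigr|>\delta^+\Bigr\},\qquad \bigl\{T_{k^-_n}> n\bigr\}\subseteq \Bigl\{\Bigl|\fracd{T_{k^-_n}}{a_{k^-_n}}-1\Bigr|>\delta^-\Bigr\}.
\feq

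\textbf{Step 3: Applying Proposition~\ref{ldp} and matching exponential rates.} Proposition~\ref{ldp} (applied along the subsequences $k^\pm_n\to\infty$) gives constants $\kappa^\pm>0$ such that for $n$ sufficiently large,
\beq
\log P\bigl(T_{k^\pm_n}\in \text{bad set}\bigr) \le -\kappa^\pm\, k^\pm_n\, \veps_{k^\pm_n}.
\feq
Since $\veps\in \mbox{RV}(-\alpha),$ regular variation yields $\veps_{k^\pm_n}\sim (1\pm x)^{-\alpha}\veps_{c_n}.$ Using $b_n=\veps_{c_n}^{-1}$ and Corollary~\ref{sequen}-(iii), which gives $c_n\veps_{c_n}\sim(1+\alpha)n/b_n,$ we conclude
\beq
k^\pm_n\veps_{k^\pm_n}\sim (1\pm x)^{1-\alpha} c_n\veps_{c_n}\sim (1+\alpha)(1\pm x)^{1-\alpha}\fracd{n}{b_n^2}.
\feq
Dividing by $n/b_n^2$ and taking the limit shows $\limsup_{n\to\infty}\tfrac{b_n^2}{n}\log P(\eta_n >(1+x)c_n)\le -\kappa^+(1+\alpha)(1+x)^{1-\alpha}<0,$ and the analogous bound for the lower tail. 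Combining the two tails finishes the proof.

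\textbf{Anticipated difficulty.} The argument is structurally short, and the only delicate point is controlling the asymptotics of $k^\pm_n\veps_{k^\pm_n}$ uniformly in $n$ through the composition of two regularly varying sequences along a (slowly varying) moving index; this is handled by Theorem~\ref{th:rv}-(ii), so no new idea is needed beyond bookkeeping. The case $x\ge 1$ in the lower tail requires no separate treatment because $\{\eta_n<(1-x)c_n\}=\varnothing$ for $n$ large in that range.
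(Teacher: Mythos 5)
Your reduction is exactly the route the paper intends: the authors omit the proof, describing it as the same duality argument used to pass from Proposition~\ref{tsuper} to Proposition~\ref{cor}, and your Steps 1--3 carry this out correctly for the upper bound. The conversion $k^{\pm}_n\veps_{k^{\pm}_n}\sim(1+\alpha)(1\pm x)^{1-\alpha}\,n/b_n^2$ via Corollary~\ref{sequen}-(iii) and Theorem~\ref{th:rv}-(ii) is right, so you do obtain $\limsup_{n\to\infty}\frac{b_n^2}{n}\log P\bigl(\bigl|\frac{\eta_n}{c_n}-1\bigr|>x\bigr)<0$, which is all that is used later (Corollary~\ref{weakdelta}). (One trivial slip: since $\{\eta_n>(1+x)c_n\}=\{T_{\lfloor(1+x)c_n\rfloor}\le n\}$, your upper-tail ``equality'' with $k^{+}_n=\lceil(1+x)c_n\rceil$ points the wrong way; replace $k^{+}_n$ by $\lfloor(1+x)c_n\rfloor$ and nothing else changes.)

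The genuine gap is that the statement asserts the \emph{existence} of the limit --- the paper stresses right after Corollary~\ref{weakdelta} that ``the existence of the limit is a part of the claim'' --- while your argument only bounds the limsup, and this cannot be repaired from the statement of Proposition~\ref{ldp} alone. After the duality, the two tails of $\eta_n$ become one-sided deviations of $T_{k^{\pm}_n}/a_{k^{\pm}_n}$, below a level tending to $(1+x)^{-(1+\alpha)}$ and above a level tending to $(1-x)^{-(1+\alpha)}$ respectively, and the two-sided rate in Proposition~\ref{ldp} supplies no matching \emph{lower} bounds for either tail. To get the limit one must go back into the proof of Proposition~\ref{ldp}, where the G\"artner--Ellis step yields the one-sided limits $\lim_m\frac{1}{m\veps_m}\log P(T_m/a_m\le 1-y)$ and $\lim_m\frac{1}{m\veps_m}\log P(T_m/a_m\ge 1+y)$, use monotonicity in $y$ and continuity of the rate function to handle the fact that $n/a_{k^{\pm}_n}$ only converges to (rather than equals) the target levels, and finally observe that the combined rate is the larger of the two one-sided rates. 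With that addition, your computation of the scale $k^{\pm}_n\veps_{k^{\pm}_n}$ identifies the limiting constant and completes the proof.
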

In both the corollaries above, the existence of the limit is a part of the claim.
\begin{corollary}
\label{weakdelta} Let Assumption~\ref{maina} hold. Then, there
exists a sequence $(\theta_n)_{n\geq 1}$ such that
$\theta_n\in(0,1)$ for all $n,$ $\lim_{n\to\infty} \theta_n=0$ and
\beq \lim_{n\to\infty} \exp\Bigl(\fracd{n}{b_n^2\log n}\Bigr)\cdot
P\Bigl(\Bigl|\frac{\eta_n}{c_n}-1\Bigr|>\theta_n\Bigr)=
\lim_{n\to\infty} b_n^2
P\Bigl(\Bigl|\frac{\eta_n}{c_n}-1\Bigr|>\theta_n\Bigr)=0. \feq
\end{corollary}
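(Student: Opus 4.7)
\textbf{Proof proposal for Corollary \ref{weakdelta}.} The plan is to construct $(\theta_n)$ by a diagonal argument that combines the large deviation bound of Corollary~\ref{ldpeta} with the rate comparison furnished by Corollary~\ref{sequen}-(iv). The idea is simple: for each fixed $x>0$ the tail $P(|\eta_n/c_n-1|>x)$ decays faster than $\exp(-\rho(x)n/b_n^2)$, and by Corollary~\ref{sequen}-(iv) the quantity $n/b_n^2$ eventually dominates both $\log b_n$ and $n/(b_n^2\log n)$. So we may let $\theta_n\to 0$ step by step, slowly enough to keep these inequalities valid.

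More precisely, for each integer $k\ge 2$, Corollary~\ref{ldpeta} yields a constant $\rho_k>0$ and an index $M_k$ such that for all $n\ge M_k$,
\beq
P\Bigl(\Bigl|\tfrac{\eta_n}{c_n}-1\Bigr|>\tfrac1k\Bigr)\le \exp\Bigl(-\rho_k\,\tfrac{n}{b_n^2}\Bigr).
\feq
By Corollary~\ref{sequen}-(iv), $n/(b_n^2\log b_n)\to\infty$, hence $n/b_n^2\to\infty$ and also $n/(b_n^2\log n)/(n/b_n^2)=1/\log n\to 0$. So I can enlarge $M_k$ to $N_k$, with $N_k>N_{k-1}$, in such a way that for all $n\ge N_k$ both
\beq
\rho_k\,\tfrac{n}{b_n^2}\ge 2\log b_n+\tfrac{n}{b_n^2\log n}+k
\feq
and the tail bound above hold.

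Now I define $\theta_n:=1/k$ whenever $n\in[N_k,N_{k+1})$. Then $\theta_n\in(0,1)$ and $\theta_n\to 0$. For such $n$ the chain of inequalities
\beq
b_n^2\, P\Bigl(\Bigl|\tfrac{\eta_n}{c_n}-1\Bigr|>\theta_n\Bigr)\le \exp\Bigl(2\log b_n-\rho_k\tfrac{n}{b_n^2}\Bigr)\le \exp(-k- \tfrac{n}{b_n^2\log n})\le e^{-k}
\feq
yields the first limit, and the symmetric computation
\beq
\exp\Bigl(\tfrac{n}{b_n^2\log n}\Bigr)P\Bigl(\Bigl|\tfrac{\eta_n}{c_n}-1\Bigr|>\theta_n\Bigr)\le \exp\Bigl(\tfrac{n}{b_n^2\log n}-\rho_k\tfrac{n}{b_n^2}\Bigr)\le \exp(-k-2\log b_n)\le e^{-k}
\feq
yields the second. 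Letting $n\to\infty$ forces $k\to\infty$, completing the proof.

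There is no real obstacle here once Corollary~\ref{ldpeta} and Corollary~\ref{sequen}-(iv) are in hand; the only subtlety is orchestrating the construction so that $\theta_n\to 0$ but slowly enough that the exponential rate $\rho_k n/b_n^2$ continues to dwarf the competing factors $2\log b_n$ and $n/(b_n^2\log n)$. The key quantitative input is Corollary~\ref{sequen}-(iv), which precisely guarantees that $n/b_n^2$ grows strictly faster than $\log b_n$ (and a fortiori faster than $n/(b_n^2\log n)$), leaving enough slack to insert a diverging sequence $1/\theta_n$ without invalidating the bound.
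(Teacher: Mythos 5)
Your proposal is correct and follows essentially the same route as the paper: combine the large deviation bound of Corollary~\ref{ldpeta} with the growth comparison of Corollary~\ref{sequen}-(iv), and then diagonalize over thresholds $1/k$ by choosing an increasing sequence of indices on which $\theta_n=1/k$. The paper states the fixed-$x$ limits first and then selects the cut points $n_p$, while you fold the quantitative comparison directly into the choice of $N_k$, but this is the same argument.
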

We remark that the estimates stated in Corollary~\ref{weakdelta} are
not optimal and, furthermore, the second is actually implied by the
first one. However, the statement in the form given above is
particularly convenient for reference in the sequel.
\par
Corollary~\ref{ldpeta} is deduced from Proposition~\ref{ldp} using a
routine argument similar to the derivation of Proposition~\ref{cor}
from Proposition~\ref{tsuper}, and thus its proof will be omitted.
In turn, Corollary~\ref{weakdelta} is an immediate consequence of
Corollary~\ref{ldpeta} and Corollary~\ref{sequen}-(iv). Indeed,
these two results combined together imply that \beq
\lim_{n\to\infty} \exp\Bigl(\fracd{n}{b_n^2\log n}\Bigr)\cdot
P\Bigl(\Bigl|\frac{\eta_n}{c_n}-1\Bigr|>x\Bigr) =\lim_{n\to\infty}
b_n^2 P(|\eta_n/c_n -1|> x)=0 \feq for all $x>0.$ Let $n_0=1,$ for
$p\in \nn$ let $n_p$ be the smallest integer greater than $n_{p-1}$
such that $\exp\bigl(\frac{n}{b_n^2\log n}\bigr)\cdot
P\bigl(\bigl|\frac{\eta_n}{c_n}-1\bigr|>1/p\bigr)<1/p\quad\mbox{for
all}~n\geq n_p,$ and set $\theta_n=1/p$ for
$n=n_p,\ldots,n_{p+1}-1.$
\par
\begin{proof}[Proof of Proposition \ref{tsuper}]
Let $S_n = (T_n -a_n) /g_n$. By Lemma~\ref{lem:timoments},
$E(S_n)=0$ and $E(S_n^2)=$. By  Lyapunov's version of the
CLT for the partial sums of independent random variables, \cite[p.
121]{durrett}, ${S_n}  \Rightarrow N(0,1)$ if
\begin{equation*}
\label{eq:zero}
\lim_{n\to\infty} \frac{1}{g_n^3} \sum_{m=1}^nE \bigl( |\tau_m-1-E (\tau_m)|^{3}\bigr) =0.
\end{equation*}
By Lemma~\ref{lem:timoments}, and using the fact that $\veps_m\in (0,1),$
\beq
E\bigl(|\tau_m-1-\veps_m^{-1}|^3\bigr) \leq
4E\bigl((\tau_m-1)^3+\veps_m^{-3}\bigr) \leq 4 (8\veps_m^{-5}+\veps_m^{-3}) \leq 36 \veps_m^{-5}.
\feq
Next, by Theorem~\ref{th:rv}-(i),  as $n\to\infty,$ $\sum_{m=1}^n \veps_m^{-5} \sim (1+5\alpha)^{-1} n\veps_n^{-5}$ and
\begin{equation}
\label{eq:gn}
g_n^2  \sim (1+3\alpha)^{-1} n \veps_n^{-3}.
\end{equation}
Therefore,
$$ \fracd{1}{g_n^3}\sum_{m=1}^n \veps_m^{-5} \sim \frac{(1+5\alpha)^{-1} n
\veps_n^{-5}}{(1+3\alpha)^{-3/2} n^{3/2} \veps_n^{-9/2}}=\frac
{(1+3\alpha)^{3/2}}{(1+5\alpha)} \frac{1}{\sqrt{n \veps_n}}\to
0,~\mbox{ as }n\to\infty,$$ where we use Assumption \ref{maina} to
obtain the last limit. This completes the proof of the weak convergence
of $(T_n-a_n)/g_n.$
\par
The convergence of $T_n/a_n$ in probability will follow, provided
that $\lim_{n\to\infty} a_n/g_n =\infty$.
Using again Theorem \ref{th:rv}-(i), and then Assumption~\ref{maina}, we obtain, as $n\to\infty,$
$$ \frac{a_n}{g_n} \sim
\frac{(1+\alpha)^{-1}n\veps_n^{-1}}{(1+3\alpha)^{-1/2} n^{1/2}
\veps_n^{-3/2}}\sim\frac{(1+3\alpha)^{1/2}}{1+\alpha}
\sqrt{n\veps_n}\to \infty,~\mbox{ as }n\to\infty.$$
The proof of the proposition is completed.
\end{proof}
\begin{proof}[Proof of Proposition \ref{cor}]
First, we observe that the second statement of the proposition follows
from the first one and the fact that $\lim_{n\to\infty} c_n /\sqrt{n}=\infty$
(cf. Corollary~\ref{sequen}-(iv)).
\par
We next prove the central limit theorem for $\eta_n.$ As in Proposition \ref{tsuper},
let $g_m$ denote the variance of $T_m$ and let $\witi T_m=(T_m -a_m)/g_m$.
Fix $x\in\rr$. By Corollary~\ref{sequen}-(iv), $x\sqrt{n}+c_n\sim c_n$  as $n\to\infty,$
and hence
\begin{align}
\nonumber
P\Bigl(\frac{\eta_n- c_n}{\sqrt{n}}\le x \Bigr)&= P(\eta_n \le c_n + x\sqrt{n})  =
P(T_{[c_n+x\sqrt{n}]+1}>n)\\
\label{eq:etatot}
& = 1-P\Bigl( \witi T_{[c_n+x\sqrt n]+1} \leq
\fracd{n-a_{[c_n+x\sqrt n]+1}} {g_{[c_n+x\sqrt n]+1}}\Bigr).
\end{align}
By  \eqref{eq:gn}, as $n\to\infty,$ $g_{[c_n+x\sqrt n]+1}  \sim
(1+3\alpha)^{-1/2} (c_n+x\sqrt n)^{1/2}
\veps_{[c_n+x\sqrt{n}]}^{-3/2}\sim \sqrt{ \frac{c_n
\veps_{c_{n}}^{-3}}{1+3\alpha}},$ and hence \beq
\fracd{a_{[c_n+x\sqrt n]+1}-n}{g_{[c_n+x\sqrt n]+1}} \sim
\frac{\sum_{i=c_n}^{[c_n+x\sqrt n]+1}
\veps_i^{-1}}{\sqrt{c_n\veps_{c_{n}}^{-3}/(1+3\alpha)}}
\underset{\mbox{Theorem }\ref{th:rv}-(ii)}{\sim} \frac{ x\sqrt
n\cdot \veps_{c_{n}}^{-1}\sqrt{1+3\alpha}}
{\sqrt{c_n\veps_{c_{n}}^{-3}}}. \feq The rightmost expression above
tends to $x\sqrt{\frac{(1+3\alpha)}{1+\alpha}},$
as $n \to \infty$. Therefore, \beq \lim_{m\to\infty} P\Bigl(\witi T_m \le
x \sqrt{\frac{1+3\alpha}{1+\alpha}}\Bigr)&
\underset{\mbox{Proposition }\ref{tsuper}}{=}& \lim_{m\to\infty} 1-
P\Bigl(\witi T_m \le - x \sqrt{\frac{1+3\alpha}{1+\alpha}}\Bigr)
\\ &\underset{\eqref{eq:etatot}}{=}&\lim_{n\to\infty}
P\Bigl(\frac{\eta_n-c_n}{\sqrt{n}}\le x \Bigr),\feq completing the
proof of Proposition~\ref{cor}.
\end{proof}
\begin{proof}[Proof of Proposition~\ref{ldp}]
Let $\rho_n = \min_{k\le n} \veps_k$. Theorem~\ref{th:rv}-(ii) implies that $\rho_n \sim
\veps_n$  as $n\to\infty.$ Let $\lambda \in (-\infty,\frac 12)$ and define $ \Lambda(\lambda)= \int_0^1
\bigl(x^{-\alpha}-\sqrt{x^{-2\alpha}-2\lambda}\bigr) dx.$ We
shall prove that \beqn \label{eq:expmom} \lim_{n\to\infty}
\frac{1}{n\veps_n} \log E \bigl(e^{\lambda \rho_n^2  T_n} \bigr)=
\Lambda(\lambda). \feqn Once this result is established, we will
deduce the proposition by applying standard Chebyshev's bounds for
the tail probabilities of $T_n.$
\par
To prove \eqref{eq:expmom} we first observe that, by Lemma~\ref{lem:timoments},
\beqn
\label{eq:display}
&&
\frac{1}{n\veps_n} \log E\bigl(e^{\lambda \rho_n^2 T_n}\bigr)=
\frac{1}{n\veps_n}\sum_{i=1}^n
\log\Bigl(1+\fracd{\veps_i-\sqrt{1-(1-\veps_i^2)e^{2\rho_n^2 \lambda}}}{1-\veps_i}\Bigr).
\feqn
Fix $\delta \in (0,1)$.  We next show that, when n is large
enough, the contribution of the first $[\delta n]$ summands on the righthand side
of \eqref{eq:display} is bounded by a continuous function of $\delta$ which vanishes at $0.$
We have
\beq
&&
\Bigl|\frac{1}{n\veps_n}\sum_{i=1}^{[\delta n]}
\log\Bigl(1+\fracd{\veps_i-\sqrt{1-(1-\veps_i^2)e^{2\rho_n^2 \lambda}}}{1-\veps_i}\Bigr)\Bigr|
\leq
\frac{1}{n\veps_n}\sum_{i=1}^{[\delta n]}
\fracd{(1+\veps_i)\bigl|e^{2\rho_n^2 \lambda}-1\bigr|}{
\veps_i+\sqrt{1-(1-\veps_i^2)e^{2\rho_n^2\lambda}}}
\\
&&
\qquad
\leq
\frac{2}{n\veps_n}\sum_{i=1}^{[\delta n]}
\fracd{\bigl|e^{2\rho_n^2 \lambda}-1\bigr|}{
\veps_i}\le \frac{2a_{_{[\delta n]}}}{n \veps_n}\bigl|e^{2\rho_n^2\lambda}-1\bigr|.
\feq
Since $(a_n)_{n\geq 1}\in \mbox{RV}(1+\alpha),$ Theorem~\ref{th:rv} implies that, as $n\to\infty,$
$a_{_{[\delta n]}}\sim \delta^{1+\alpha}a_n\sim \frac{\delta^{1+\alpha}}{1+\alpha}\veps_n^{-1}.$
Therefore, \beq \frac{2a_{_{[\delta n]}}}{n \veps_n}\bigl|e^{2\rho_n^2\lambda}-1\bigr| \underset{n\to\infty}{\sim}
\frac{2 \delta^{1+\alpha}\veps_n^{-1}}{(1+\alpha)n\veps_n}2\veps_n^2 \lambda
= \fracd{4 \lambda \delta^{1+\alpha}}{1+\alpha}\underset{\delta\to 0}{\longrightarrow 0}.
\feq
Hence,
\beq
\lim_{\delta\to0}\limsup_{n\to\infty} \Bigl|\frac{1}{n\veps_n}\sum_{i=1}^{[\delta n]}
\log\Bigl(1+\fracd{\veps_i-\sqrt{1-(1-\veps_i^2)e^{2\rho_n^2 \lambda}}}{1-\veps_i}\Bigr)\Bigr|=0.
\feq
Next, using elementary estimates on remainders of Taylor's series, we obtain
\beq
&&
\lim_{n\to\infty} \frac{1}{n\veps_n } \log E\bigl(e^{\lambda\rho_n^2  T_n}\bigr)
=
\lim_{\delta\to 0} \lim_{n\to\infty}
\frac{1}{n\veps_n}\sum_{i=[\delta n]}^n
\log\Bigl(1+\fracd{\veps_i-\sqrt{1-(1-\veps_i^2)e^{2\rho_n^2 \lambda}}}{1-\veps_i}\Bigr)
\\
&&
\qquad
=
\lim_{\delta\to 0} \lim_{n\to\infty}
\frac{1}{n\veps_n} \sum_{i=[\delta n]}^n
\fracd{(1+\veps_i)(e^{2\rho_n^2 \lambda}-1)}
{\veps_i+\sqrt{1-(1-\veps_i^2)e^{2\rho_n^2\lambda}}}
=
\lim_{\delta\to 0} \lim_{n\to\infty}
\frac{1}{n}\sum_{i=[\delta n]}^n
\fracd{2\lambda\rho_n}
{\veps_i+\sqrt{\veps_i^2-2\rho_n^2 \lambda}}
\\
&&
\qquad
=
\lim_{\delta\to 0} \lim_{n\to\infty}
\frac{1}{n}\sum_{i=[\delta n]}^n
\fracd{2\lambda}
{\veps_i/\rho_n+\sqrt{(\veps_i/\rho_n)^2-
2\lambda}}
=
\int_0^1 \fracd{2\lambda}{x^{-\alpha}+
\sqrt{x^{-2\alpha}-2\lambda}}~dx
=\Lambda(\lambda).
\feq
This completes the proof of \eqref{eq:expmom}.
\par
We note that $\lim_{\lambda \to -\infty}\Lambda(\lambda) = -\infty$.  In addition,
$$\Lambda'(\lambda)
=
\int_0^1
\bigl(x^{-2\alpha}-2\lambda\bigr)^{-1/2}~dx.
$$
This function is strictly increasing and hence $\Lambda$ is strictly
convex.  Note also that $\Lambda'(0)=\frac{1}{1+\alpha},$
$\lim_{\lambda \to -\infty} \Lambda'(\lambda)=0,$ and
$\lim_{\lambda\to \frac 12} \Lambda'(\lambda) =\infty$.
\par
For $z>0$, let $J_z(\lambda) = \Lambda(\lambda)-\lambda z/(1+\alpha)$.  This
function is convex and $J_z(0)=0$.  Since $J_z'(\lambda) =
\Lambda'(\lambda)-z/(1+\alpha),$ the minimum of $J_z$ is
uniquely attained at some $\lambda^* \in (-\infty,\frac 12),$ and
$J_z(\lambda^*)<0$ for $z\neq 1.$  In addition, if $z >1$, $\lambda^*>0$ and if
$z<1$, $\lambda^*<0$. \\
By Theorem \ref{th:rv}-(i), as $n\to\infty,$ $$a_n \rho_n^2 \sim \frac{n \veps_n^{-1} \veps_n^2}{1+\alpha}=\frac{n
\veps_n}{1+\alpha}.$$ It follows that if  $\lambda \in (0,\frac 12)$, then for $x>0,$ as $n\to \infty,$
\beq
&&
\fracd{1}{n\veps_n} \log P\bigl(T_n/a_n\geq 1+x)
\le \frac{1}{n\veps_n}\bigl[ \log E \bigl(e^{ \lambda
\rho_n^2 T_n}\bigr) - \lambda a_n \rho_n^2 (1+x)\bigr]\sim J_{1+x}(\lambda).
\feq
Therefore,
$$\limsup_{n\to\infty} \fracd{1}{n\veps_n} \log P\bigl(T_n/a_n\geq
1+x)\le \min_{0<\lambda < \frac 12} J_{1+x}(\lambda)<0.$$
If $\lambda <0$, then for $x\in (0,1),$  as $n\to\infty,$
\beq
\fracd{1}{n\veps_n} \log P\bigl(T_n/a_n\leq 1-x) \le \frac{1}{n\veps_n}\bigl[ \log
E \bigl(e^{ \lambda \rho_n^2 T_n}\bigr) - \lambda a_n \rho_n^2 (1-x) \bigr]\sim J_{1-x}(\lambda).
\feq
Therefore,
$$\limsup_{n\to\infty} \fracd{1}{n \veps_n } \log P\bigl(T_n/a_n\leq 1-x \bigr)
\leq \min_{\lambda < 0} J_{1-x} (\lambda)<0.$$
Moreover, since $\lim_{\lambda\to \frac 12} \Lambda'(\lambda) =\infty$, the log-generating function $\Lambda(\lambda)$ is steep in the terminology of
\cite{ldpb}. Therefore, by the G$\ddot{\mbox{a}}$rtner--Ellis theorem (cf.~p.~44~in~\cite{ldpb}, see also Remark~(a) following
the theorem), the above upper limits are in fact the limits. The proof of Proposition~\ref{ldp} is completed.
\end{proof}
\subsection{Proof of Theorem~\ref{walk}}
\label{subs:walk} Since the law of $X$ is symmetric about $0$, the
theorem is equivalent to the claim that $\lim_{n\to\infty} P(X_n
> xb_n )=e^{-2x}/2$ for all $x>0.$ Furthermore, since $\lim_{n\to\infty} b_n =\infty$
and $b_{n}\sim b_{n+1},$ it suffices to show that \beq
\lim_{n\to\infty} P(X_{2n} > xb_{2n} ) = e^{-2x}/2,\quad x>0. \feq
The idea of the proof is the following. In this subcritical regime,
we have seen in the beginning of the section that the number of
visits to the origin by time $2n$ is very-well localized around its
typical value $c_{2n}$ (cf Proposition~\ref{cor},
Corollaries~\ref{ldpeta} and \ref{weakdelta}). From properties of
regular varying sequences, this will imply that the drift at time
$2n$ is also very-well localized around its typical value
$\veps_{c_{2n}}$ (see assertions \eqref{eq:xilim} and
\eqref{eq:betaest} below). Then, by Lemma~\ref{lem:monot}, we are
able to compare our walk with oscillating walks with a drift close
to $\veps_{c_{2n}}$ (see \eqref{comp1} and \eqref{comp2} below), for
which we know the stationary distribution. In particular,
Lemma~\ref{lem:convrate} allows us to show that the distribution of
$X_n$ is close to that stationary distribution. Let us now turn to
the precise argument.
\par

Fix $x>0.$ We begin with an upper bound for $P(X_{2n} > xb_{2n} ).$
Recall the definition of $(\theta_n)$ from
Corollary~\ref{weakdelta}. For $n\geq 1,$ let \beq \Gamma_n =\{X_n >
xb_n ,~\eta_n \leq (1+\theta_n) c_n\}. \feq We have \beq P(X_{2n}
>xb_{2n}) \leq P(\Gamma_{2n})+P(\eta_{_{2n}}> (1+\theta_{2n}) c_{_{2n}}). \feq We
proceed with an estimate of the righthand side. By
Theorem~\ref{th:rv}-(ii),  as $n\to\infty,$
\begin{equation}
\label{eq:xilim} \xi_n:=\min_{ i\le (1+\theta_n)c_n}\veps_i \sim
\veps_{(1+\theta_n)c_n}\sim \veps_{c_{n}}.
\end{equation}
For $n\geq 1$ consider the sequence $\alpha_n=(\alpha_{n,k})_{k\ge 1}$
defined as follows: $\alpha_{n,k}= \veps_k$ for $k \le (1+\theta_n)
c_n$ and $\alpha_{n,k} = \xi_n$ for $k> (1+\theta_n)c_n$.
Since on event $\Gamma_n$ we have $\eta_n \le (1+\theta_n)c_n$, it follows that
$P(\Gamma_{2n})=P^{\alpha_{_{2n}}}(\Gamma_{2n})\le P^{\alpha_{_{2n}}}(X_{2n} > xb_{2n})$.
\par
Recall the notation $P^{(\delta)}$ introduced in the second
paragraph of Section~\ref{sec:prel} (this notation is distinct from
$P^\delta$ and emphasizes that the sequence $(\delta)$ is constant).
Since $\xi_n = \min_{k \ge 1} \alpha_{n,k}$, Lemma \ref{lem:monot}
implies: \beqn \label{comp1} \ \ P^{\alpha_{_{2n}}}(X_{2n} >
xb_{2n})\le P^{(\xi_{2n})}(X_{2n} > xb_{2n})\le
P^{(\xi_{2n})}_{\mu_{_{\xi_{2n}}}}(X_{2n} >xb_{2n})=
\mu_{_{\xi_{2n}}}\bigl((xb_{2n},\infty)\bigr). \feqn Therefore
\begin{equation}
\label{eq:ub} P(X_{2n} > xb_{2n}) \le \mu_{_{\xi_{2n}}}\bigl((xb_{2n},\infty)\bigr) + P(\eta_{_{2n}}>
(1+\theta_{2n})c_{_{2n}}).
\end{equation}
The second term on the righthand side of \eqref{eq:ub} converges to $0,$ as
$n\to\infty$, due to Proposition~\ref{cor}.
Furthermore, \eqref{eq:invmeas9} and \eqref{eq:xilim} yield that, as $n\to\infty,$
\beqn
\label{eq:lntail}
\mu_{_{\xi_{2n}}}\bigl((xb_{2n},\infty)\bigr)\sim  2 \xi_{2n}
\sum_{j=[xb_{2n}/2]}^{\infty}
\Bigl(\fracd{1-\xi_{2n}}{1+\xi_{2n}}\Bigr)^{2(j-1)}
\sim
\fracd{1}{2} \Bigl(\fracd{1-\xi_{2n}}{1+\xi_{2n}}\Bigr)^{xb_{2n}} \underset{\rho \to 0}{\longrightarrow} \fracd{1}{2}e^{-2x}.
\feqn
Using \eqref{eq:ub}, we conclude that
\beq
\limsup_{n\to\infty} P(X_{2n}> xb_{2n}) \leq e^{-2x}/2.\feq
We now turn to a lower bound on $P(X_{2n}> xb_{2n}).$ It follows from Corollary~\ref{sequen}-(iv)
that there exists a sequence $(\kappa_n)_{n\ge 1}$ taking values in
$2\zz_+$ and satisfying
\begin{align}
\label{eq:kappa}
\lim_{n\to\infty} \kappa_n/n=0 \quad \mbox{and} \quad
\lim_{n\to\infty} \frac{\kappa_n \veps_{c_{_n}}^2}{\log (\veps_{c_{_n}}^{-1})}=\infty.
\end{align}
Note that the second limit in \eqref{eq:kappa} ensures that $\lim_{n\to\infty}
\kappa_n=\infty$.  Let
\beqn
\label{calmn}
\Upsilon_n=\{m\in \nn: |m-c_n|\leq \theta_n c_n\}.
\feqn
By Theorem~\ref{th:rv}-(ii), we have, as $n\to\infty,$
\begin{equation}
\label{eq:betaest} \beta_n:= \max_{m\in \Upsilon_n } \veps_m \sim \veps_{c_{_n}}.
\end{equation}
Since the function $z\to z^2 / \log (z^{-1})$ is increasing on
$(0,1)$, the second limit in \eqref{eq:kappa} along with \eqref{eq:betaest}
imply that $\lim_{n\to\infty} \frac{\kappa_n \beta_n^2}{\log
(\beta_n^{-1})}=\lim_{n\to\infty} \frac{\kappa_n \beta_{n-\kappa_n}^2}{\log
(\beta_{n-\kappa_n}^{-1})}=\infty$. Therefore,
\begin{equation}
\label{eq:makesure} \lim_{n\to\infty}
(1+\beta_n^2)^{-\kappa_n}\beta_n^{-s}=\lim_{n\to\infty}
(1+\beta_{n-\kappa_n}^2)^{-\kappa_n}\beta_{n-\kappa_n}^{-s}=0~\mbox{for all}~s \in \rr.
\end{equation}
We have
\begin{align}
\nonumber
& P(X_{2n} >xb_{2n} )=\frac{1}{2} P(|X_{2n}| >xb_{2n} )\ge \frac{1}{2} P(|X_{2n}| >
xb_{2n},\eta_{_{2n-\kappa_{_{2n}}}}\in \Upsilon_{2n-\kappa_{_{2n}}})\\
\nonumber & \qquad =\frac{1}{2} \sum_{  m \in \Upsilon_{2n-\kappa_{_{2n}}}}
P(|X_{2n}|> xb_{2n},\eta_{_{2n-\kappa_{_{2n}}}}=m)\\
\label{eq:hamechaye}&\qquad =\frac{1}{2}\sum_{
m \in \Upsilon_{2n-\kappa_{_{2n}}}}
\sum_{j\in 2\zz}  E \bigl
(\Ind_{\{\eta_{_{2n-\kappa_{_{2n}}}}=m,X_{2n-\kappa_{_{2n}}}=j\}}
P_{(j,m)}(|X_{\kappa_{_{2n}}}| > xb_{2n})\bigr).
\end{align}
For $j\in 2\zz$ and $m\in \Upsilon_{2n-\kappa_{_{2n}}},$
Lemma~\ref{lem:monot} implies that \beqn
\label{comp2}
P_{(j,m)}(|X_{\kappa_{_{2n}}} |
> xb_{2n}) \geq
P^{(\beta_{2n-\kappa_{_{2n}}})}_j(|X_{\kappa_{_{2n}}}|> xb_{2n}
)\geq P^{(\beta_{2n-\kappa_{_{2n}}})}(|X|_{\kappa_{_{2n}}}>
xb_{2n}).\feqn Plugging this inequality into the righthand side of
\eqref{eq:hamechaye}, we obtain
\begin{equation}
\label{eq:almostdone} P(X_{2n} >xb_{2n})  \ge
P(\eta_{_{2n-\kappa_{_{2n}}}}\in \Upsilon_{2n-\kappa_{_{2n}}})
P^{(\beta_{2n-\kappa_{_{2n}}})}(X_{_{\kappa_{2n}}}> xb_{2n}).
\end{equation}
The first term on the righthand side of \eqref{eq:almostdone} converges to $1,$ as
$n\to\infty,$ by Corollary~\ref{weakdelta}. Moreover, by Lemma~\ref{lem:convrate},
$$ P^{(\beta_{2n-\kappa_{_{2n}}})}(X_{\kappa_{_{2n}}}> xb_{2n} )
\ge \mu_{_{\beta_{2n-\kappa_{_{2n}}}}}\bigl((xb_{2n},\infty)\bigr) -
2(1+\beta_{2n-\kappa_{_{2n}}}^2)^{-\kappa_{_{2n}}}.$$
The second term on the righthand side converges to $0$ due to \eqref{eq:makesure}. Therefore,
\eqref{eq:lntail} and \eqref{eq:betaest} imply that
$$\liminf_{n\to\infty} P(X_{2n} > xb_{2n}) \geq e^{-2x}/2,$$
which completes the proof of Theorem~\ref{walk}. \qed
\subsection{Proof of Theorem \ref{lastvislastexc}}
\label{last}
\paragraph{\bf Proof of Theorem~\ref{lastvislastexc}-(i)}
As in the previous paragraph, this proof once again relies on
Lemma~\ref{lem:convrate} and Corollary~\ref{ldpeta}. We adopt
notation from the proof of Theorem~\ref{walk} above.
\par
It follows from \eqref{eq:ub} that
$$P(X_{2n}=0) \geq \mu_{_{\xi_{2n}}}(0)-P(\eta_{2n}\ge (1+\theta_{2n})c_{_{2n}}).$$
Therefore,
$$b_{2n} P(X_{2n}=0)\underset{\eqref{eq:invmeas9}}{\ge}
\frac{2}{1+\xi_{2n}}\frac{\xi_{2n}}{\veps_{c_{_{2n}}}}-b_{2n}
P(\eta_{2n}\ge (1+\theta_{2n})c_{_{2n}}).$$ The second term on the righthand
side converges to $0$ due to Corollary~\ref{ldpeta} while he first term
converges to $2$ due to \eqref{eq:xilim}. Hence,
$$ \liminf_{n\to\infty} b_{2n} P(X_{2n}=0)\ge 2.$$
The upper bound is obtained in a similar way. Recall $\Upsilon_n$
was defined in \eqref{calmn}. By \eqref{eq:almostdone},
\begin{align*}
P(X_{2n}=0)& \le 1-P(\eta_{_{2n-\kappa_{_{2n}}}}\in \Upsilon_{2n-\kappa_{_{2n}}})
P^{(\beta_{2n-\kappa_{_{2n}}})}(X_{\kappa_{2n}}\ge 2)\\
& \le 1-(1-P(\eta_{_{2n-\kappa_{_{2n}}}}\not\in \Upsilon_{2n-\kappa_{_{2n}}})P^{(\beta_{2n-\kappa_{_{2n}}})}(X_{\kappa_{_{2n}}}\geq 2)\\
& = P^{(\beta_{2n-\kappa_{_{2n}}})}(X_{\kappa_{_{2n}}}=0)+P(\eta_{_{2n-\kappa_{_{2n}}}}\not\in \Upsilon_{2n-\kappa_{_{2n}}})\\
&\le \mu_{_{\beta_{2n-\kappa_{_{2n}}}}}(0)+2(1+\beta_{2n-\kappa_{_{2n}}}^2)^{-\kappa_{_{2n}}}+
P(\eta_{_{2n-\kappa_{_{2n}}}}\not\in \Upsilon_{2n-\kappa_{_{2n}}}),
\end{align*}
where in the last step we used Lemma~\ref{lem:convrate}. Therefore,
\beq
b_{2n} P(X_{2n}=0)& \underset{\eqref{eq:invmeas9}}{\le}
\fracd{2}{1+\beta_{2n-\kappa_{_{2n}}}}\fracd{\beta_{2n-\kappa_{_{2n}}}}
{\veps_{c_{_{2n}}}}+2(1+\beta_{2n-\kappa_{_{2n}}}^2)^{-\kappa_{2n}}\beta_{2n-\kappa_{_{2n}}}^{-1}
\fracd{\beta_{2n-\kappa_{_{2n}}}}{\veps_{c_{2n}}}\\ &+b_{2n}P(\eta_{_{2n-\kappa_{_{2n}}}}\not\in \Upsilon_{2n-\kappa_{_{2n}}}).
\feq
The third term on the righthand side converges to
$0$ due to Corollary \ref{ldpeta}. The second term on the righthand side converges to $0$ by
\eqref{eq:betaest} and \eqref{eq:makesure}. Finally, the first term
on the righthand side converges to $2$ by \eqref{eq:betaest}. Hence,
$$\limsup_{n\to\infty} b_{2n} P(X_{2n} =0)\le 2.$$
This completes the proof of the first part of Theorem~\ref{lastvislastexc}.
\qed
\\
$\mbox{}$
\paragraph{\bf Proof of Theorem \ref{lastvislastexc}-(ii)}
\label{lastagain}
Recall $\Upsilon_n$ from \eqref{calmn} and $\Lambda_n$ from \eqref{positive}.
By Corollary~\ref{weakdelta}, and using the Markov property, we
obtain for $t>0,$
\beqn
\nonumber
&& \liminf_{n\to\infty} b_{2n}^2 P(V_{2n}=2n-2[tb_{2n}^2])
\\
\nonumber
&& \qquad \qquad =\liminf_{n\to\infty} b_{2n}^2 \sum_{m\in \Upsilon_{2n-2[tb_{2n}^2]}} P(V_{2n}=2n-2[tb_{2n}^2], \eta_{2n-2[tb_{2n}^2]}=m)
\\
\label{llimit}
&&
\qquad \qquad =\liminf_{n\to\infty} b_{2n}^2 \sum_{m\in \Upsilon_{2n-2[tb_{2n}^2]}}
P(X_{2n-2[tb_{2n}^2]}=0,\eta_{2n-2[tb_{2n}^2]}=m)\cdot 2P^{(\veps_m)}(\Lambda_{2[tb_{2n}^2]}),
\feqn
The factor 2 in the last line comes from the fact that we also want count excursions to the negative half-line
and a symmetry argument.
\par
Recall \eqref{eq:betaest}. By Lemma~\ref{lem:monot},
\beq
&& \liminf_{n
\to \infty} b_{2n}^2 P(V_{2n}=2n-2[tb_{2n}^2]) \\ \nonumber && \qquad \geq
2\liminf_{n \to \infty} b_{2n}^2 P\bigl( X_{2n-2[tb_{2n}^2]}=0,
\eta_{2n-2[tb_{2n}^2]} \in \Upsilon_{2n-2[tb_{2n}^2]} \bigr) P^{(\beta_{2n-2[tb_{2n}^2]})}(\Lambda_{2[tb_{2n}^2]}).
\feq
Using again Corollary~\ref{weakdelta}, and taking in account that
$\lim_{n\to\infty} b_{2n}/b_{2n-2[tb_{2n}^2]}=1,$ we get
\beq
\liminf_{n \to \infty} b_{2n}^2 P(V_{2n}=2n-2[tb_{2n}^2]) \geq
2 \liminf_{n \to \infty} b_{2n}^2 P( X_{2n-2[tb_{2n}^2]}=0) P^{(\beta_{2n-2[tb_{2n}^2]})}(\Lambda_{2[tb_{2n}^2]}).
\feq
Using Lemma~\ref{meander}-(i) and Theorem~\ref{lastvislastexc}-(i), we conclude that
\beq
\liminf_{n \to \infty} b_{2n}^2 P(V_{2n}=2n-2[tb_{2n}^2]) &\geq&
2\cdot \fracd{1}{\sqrt{2t}}\int_0^{\infty} du \
\frac{2u}{\sqrt{2\pi }} \exp \Bigr(-\frac{u^2}{2} - u \sqrt{2t} -t \Bigl)
\\
&=& \sqrt{2} \mn^{(-\sqrt{2})}(\zeta>t).
\feq
A very similar argument shows that
\beq
\limsup_{n \to \infty} b_{2n}^2 P(V_{2n}=2n-2[tb_{2n}^2]) \leq  \sqrt{2} \mn^{(-\sqrt{2})}(\zeta>t),
\feq
from which Theorem~\ref{lastvislastexc}-(ii) follows in view of \eqref{scaling}.
\\
$\mbox{}$
\paragraph{\bf Proof of Theorem~\ref{lastvislastexc}-(iii).}
Fix a bounded continuous function $F : C(\rr_+,\rr) \to \rr$, a
constant $t>0.$ Let $Z_n$ be the process defined in the statement
of the theorem and let $\bigl(\witi Z_n(t)\bigr)_{t\in \rr_+}$ be a continuous process for which
$\witi Z_n\bigl(k/b_{2n}^2\bigr)=|X_{k\wedge T_1}|/b_{2n}$ whenever $k \in \zz_+,$ and which is
linearly interpolated elsewhere.
\par
For $n$ large enough, so that the quantities below are well defined,
the Markov property implies that \beq E\bigl(F(Z_n)| V_{2n} = 2n -
2[tb_{2n}^2],\eta_{2n}=m\bigr) = E^{(\veps_m)} \bigl(F(\witi
Z_n)\bigl| \Lambda_{2[tb_{2n}^2]} \bigl). \feq Let \beqn
\label{eihn} H_n(t):=b_{2n}^2\sum_{m\in \Upsilon_{2n}} E^{(\veps_m)}
\bigl(F(\witi Z_n)\bigl| \Lambda_{2[tb_{2n}^2]} \bigl)P\bigl(V_{2n}
= 2n - 2[tb_{2n}^2],\eta_{2n}=m\bigr). \feqn Since $F$ is bounded,
Corollary~\ref{weakdelta} implies that \beqn \label{getthedrift}
\lim_{n \to \infty} \bigl| b_{2n}^2 E\bigl(F(Z_n);V_{2n} = 2n -
2[tb_{2n}^2]\bigr)  - H_n(t) \bigr| =0. \feqn Recall $\Upsilon_n$
was defined in \eqref{calmn}. For $m\in\nn,$ let $\underline m_n \in
\Upsilon_{2n}$ and $\overline m_n\in \Upsilon_{2n}$ be such that
\beq E^{(\veps_{_{\underline m_n}})} \bigl(F(\witi Z_n)\bigl|
\Lambda_{2[tb_{2n}^2]} \bigl)=\min_{m\in \Upsilon_{2m}}E^{(\veps_m)}
\bigl(F(\witi Z_n)\bigl| \Lambda_{2[tb_{2n}^2]} \bigl) \feq and \beq
E^{(\veps_{_{\overline m_n}})} \bigl(F(\witi Z_n)\bigl|
\Lambda_{2[tb_{2n}^2]} \bigl)=\max_{m\in \Upsilon_{2m}}E^{(\veps_m)}
\bigl(F(\witi Z_n)\bigl| \Lambda_{2[tb_{2n}^2]} \bigl). \feq Since
$\lim_{n\to\infty} \veps_{_{\underline m_n}}
b_{2n}=\lim_{n\to\infty} \veps_{_{\overline m_n}} b_{2n}=1,$
Lemma~\ref{meander} implies that there exists \beqn \label{vaybar}
\lim_{n \to \infty} E^{(\veps_{_{\underline m_n}})}\bigl(F(\witi
Z_n)\bigl| \Lambda_{2[tb_{2n}^2]}\bigr) =\lim_{n \to \infty}
E^{(\veps_{_{\overline m_n}})}\bigl(F(\witi Z_n)\bigl|
\Lambda_{2[tb_{2n}^2]}\bigr) =E\bigl(F(\ol Y)\bigr) \feqn where $\ol
Y=(\ol Y(s))_{s\in \rr_+}$ is a non-negative process in
$C(\rr_+,\rr)$ such that $\fracd{1}{\sqrt{2t}}\bigl(\ol
Y(2ts)\bigr)_{s\in \rr_+}$ is distributed according to the law
$\mn^{(-\sqrt{2t})}(~\cdot~|\zeta>1),$ and the underlying
probability space is enlarged, if needed, to include this process.
The scaling property \eqref{scaling} implies that $\ol Y$ is
distributed according to the law $\mn^{(-1)}(~\cdot~|\zeta>2t).$
\par
In virtue of Theorem~\ref{lastvislastexc}-(ii) and
Corollary~\ref{weakdelta}, the claim of Theorem~\ref{lastvislastexc}-(iii)
follows from the above convergence and \eqref{eihn}. \qed
\begin{remark}
\label{alter} Theorem~\ref{lastvislastexc}-(ii) along with
\eqref{vaybar} yield \beq \lim_{n \to \infty} P(|X_{2n}|>xb_{2n})=
\int_{0}^{\infty}\!\!\! dt~\mn^{(-1)}(X_t >x, \zeta >t) =\int_x^\infty \!\!\! dy
\int_0^\infty \! \! \! dt \fracd{2y}{\sqrt{2\pi t^3}}\exp\Bigl(-\frac{(y-ct)^2}{2t}\Bigr). \feq It is not hard to check
that the right-hand side above is $\exp(-2x)$ in agreement with
Theorem~\ref{walk}.
\end{remark}
\subsection{Proof of Theorem~\ref{supt} and Corollary \ref{cor:supt}}
\label{supr}
\begin{proof}[Proof of Theorem \ref{supt}]
For $i\ge 1$ let $S_i = \max_{T_{i-1} \le k < T_{i}} |X_k|$. For $x>0$ let $x_n =x h_n,$ where
$h_n$is defined in the statement of the theorem. Recall $(\theta_n)_{n\geq 1}$ from Corollary~\ref{weakdelta}
and $\Upsilon_n$ from \eqref{calmn}.
\par
Fix any $x\in (0,\infty)\backslash\{1\},$ $\lambda\in (0,1),$ and
assume that $n\in\nn$ below is large enough, so that
$1-\theta_n>\lambda.$ Then, on one hand, \beqn \nonumber &&
P\bigl(|M_n| \leq x_n \bigr)= P\bigl(|M_n| \leq x_n,\eta_n
<c_n(1-\theta_n)\bigr) + P\bigl(|M_n| \leq x_n,\eta_n \geq
c_n(1-\theta_n) \bigr)
\\
\label{hand}
&&
\qquad
\leq
P(\eta_n\not\in\Upsilon_n)+\prod_{i=[\lambda c_n]}^{[c_n(1-\theta_n)]}P\bigl(S_i \leq x_n\bigr),
\feqn
and on the other hand,
\beqn
\nonumber
&&
P\bigl(|M_n| \leq x_n \bigr) \geq P\bigl(|M_n| \leq x_n,\eta_n \leq c_n(1+\theta_n)\bigr)
\\
\label{hand1} && \qquad \geq
-P(\eta_n\not\in\Upsilon_n)+\prod_{i=1}^{[c_n(1+\theta_n)]}
P\bigl(S_i \leq x_n\bigr). \feqn Observe now that \beq
\label{eq:niceandgood} \lim_{n\to\infty} \frac{c_n\veps_{c_{n}}}{n
\veps_{c_{n}}^2} \underset{\mbox{Theorem }\ref{th:rv}-(iii)}{=}
\lim_{n\to\infty} \frac{c_n
}{a_{c_n}\veps_{c_{n}}}\underset{\mbox{Theorem }\ref{th:rv}-(i)}{=}
(1+\alpha) <\infty, \feq and hence, by Corollary~\ref{weakdelta},
for all $z>0,$  \beqn \label{finale} \lim_{n\to\infty}
\fracd{P(\eta_n\not\in\Upsilon_n)} {e^{-(c_n \veps_{c_{n}})^z}}=0.
\feqn Next, by the well-known formula for the ruin probability (see
for instance \cite[p.~274]{durrett}),
\begin{equation}
\label{eq:alternative}
P(S_i \le x_n)= 1- \frac{\rho_i}{(1+\rho_i)^{x_n}-1},
\end{equation}
where $\rho_i =\fracd {2 \veps_i}{1-\veps_i}.$ For $n\in \nn$ let
\begin{equation}
\label{notka}
\chi_n: = \min_{1 \leq i \leq (1+\theta_n)c_n} \rho_i \sim 2\veps_{c_{_n}} \quad \mbox{and}
\quad \beta_{n,\lambda} := \max_{\lambda c_n\leq i \leq (1+\theta_n)c_n}
\rho_i  \sim 2 \lambda^{-\alpha}\veps_{c_{_n}},
\end{equation}
where we use Theorem~\ref{th:rv}-(iii) to state the equivalence
relations. Since the righthand side in \eqref{eq:alternative} is an
increasing function of $\rho_i$, we obtain: \beq \log
\prod_{i=1}^{[(1+\theta_n)c_n]} P(S_i \le x_n) &\ge
{[(1+\theta_n)c_n]}\log \left( 1 -
\frac{\chi_n}{(1+\chi_n)^{x_n}-1}) \right) \underset{n\to\infty}{\sim}
-\fracd{c_n\chi_n}{(1+\chi_n)^{x_n}}. \feq We next estimate the
rightmost expression above. Using \eqref{notka} and the definition
of $h_n$ given in the statement of Theorem~\ref{supt}, we have, as
$n\to\infty,$ \beq \fracd{1}{\log (\veps_{c_{_n}}c_n)}\cdot \log
\fracd{c_n\chi_n}{(1+\chi_n)^{x_n}}\sim
1-\fracd{2x\veps_{c_{_n}}h_n}{\log (\veps_{c_{_n}}c_n)}\sim 1-x,
\feq Similarly, as $n\to \infty,$ \beq \log \prod_{i=[\lambda
c_n]}^{[(1-\theta_n)c_n]} P(S_i \le x_n) &\le
{[(1-\theta_n-\lambda)c_n]}\log \left( 1 -
\fracd{\beta_{n,\lambda}}{(1+\beta_{n,\lambda})^{x_n}-1} \right)
\sim - \fracd{(1-\lambda)
c_n\beta_{n,\lambda}}{(1+\beta_{n,\lambda})^{x_n}}, \feq and \beq
\fracd{1}{\log (\veps_{c_{_n}}c_n)}\cdot \log \fracd{(1-\lambda)
c_n\beta_{n,\lambda}}{(1+\beta_{n,\lambda})^{x_n}} \sim
1-\fracd{2x\lambda^{-\alpha}\veps_{c_{_n}}h_n}{\log
(\veps_{c_{_n}}c_n)}\sim 1-x\lambda^{-\alpha}, \feq Since
$\lambda\in (0,1)$ is arbitrary, we conclude from \eqref{hand},
\eqref{hand1}, and \eqref{finale} that \beq \lim_{n\to\infty}
\frac{1}{\log(c_n \veps_{c_{n}})} \log\bigl(-\log P(|M_n| \leq
x_n)\bigr)=1-x. \feq Note that if $x>1,$ this is equivalent to $
\lim_{n\to\infty} \frac{1}{\log(c_n \veps_{c_{n}})}
\log\bigl(P(|M_n| >x_n)\bigr)=1-x.$
\par
To complete the proof of Theorem~\ref{supt}, observe that \beq
P\bigl(\max_{T_{i-1} \le k < T_{i}} X_k \leq x_n\bigr
)=\fracd{1}{2}+\fracd{1}{2}P\bigl(S_i\leq x_n\bigr )=1-\fracd{1}{2}
\fracd{\rho_i}{(1+\rho_i)^{x_n}-1}.\feq Therefore, replacing $|M_n|$
with $M_n$ and $S_i$ with $\max_{T_{i-1} \le k < T_{i}} X_k$ in
\eqref{hand} and \eqref{hand1}, the proof given above for $|M_n|$
goes through verbatim for $M_n.$
\end{proof}
\begin{proof}[Proof of Corollary \ref{cor:supt}]
Theorem~\ref{supt} implies $\lim_{n\to\infty} M_n / h_n=1$ in
probability. Furthermore, by Corollary \ref{sequen}-(iii),
$\veps_{c_{n}} c_n \in \mbox{RV}((1-\alpha)/(1+\alpha)).$ Therefore, if $\alpha<1,$
Theorem~\ref{supt} implies that for any $x>0$ there exists a constant
$z=z(x)>0$ such that \beq P\bigl(|M_n-h_n|>xh_n\bigr) \leq n^{-z}
\feq for all $n$ sufficiently large.
\par
Once this point is reached, the rest of the
proof is standard (see for instance \cite[Section~1.7]{durrett}).
Fix $\gamma>1$ and let $m_n=[\gamma^n].$ Using the Borel-Cantelli lemma, we obtain that \beq
P\bigl(|M_{m_n}-h_{m_n}|>xh_{m_n}~\mbox{i.o.}\bigr)=0,\qquad x>0.
\feq
Therefore $ \lim_{n\to\infty} M_{m_n}/h_{m_n}=1,$ $\as$ Moreover, if
$m_n\leq k<m_{n+1},$
\beq
\fracd{M_{m_n}}{h_{m_n}}\fracd{h_{m_n}}{h_k} \leq \fracd{M_k}{h_k} \leq \fracd{M_{m_{n+1}}}
{h_{m_{n+1}}}\fracd{h_{m_{n+1}}}{h_k}.
\feq
Since $\lim_{n\to\infty} m_{m+1}/m_m =\gamma$ and $(h_n)_{n\ge 1}\in
\mbox{RV}(\alpha/(1+\alpha)),$ Theorem \ref{th:rv}-(ii) implies that
\beq
\gamma^{-\frac{\alpha}{1+\alpha}} \leq \liminf_{k\to\infty} \fracd{M_k}{h_k} \leq
\limsup_{k\to\infty} \fracd{M_k}{h_k} \leq \gamma^{\frac{\alpha}{1+\alpha}}, \qquad P-\mbox{a.s.}
\feq
Since $\gamma>1$ is arbitrary, it follows that $\lim_{k\to\infty} M_k /h_k =1,$ $P$-a.s.
Furthermore, if $(k_n)_{n\geq 1}$ is a random sequence such that $X_{k_n} = M_{k_n},$ we have:
\beq
\limsup_{n\to\infty} \fracd{X_n}{h_n} \geq \limsup_{n\to\infty} \frac{X_{k_n}}{h_{k_n}}=\lim_{n\to\infty}
\frac{M_{k_n}}{h_{k_n}}=1,
\feq
where the limits hold $P$-a.s. when $\alpha<1$ and in probability when $\alpha=1.$ Since $X_n\leq M_n,$ this
finishes the proof of the corollary.
\end{proof}
\section*{Acknowledgments}
We would like to express our gratitude to all the people with whom we were discussing various aspects
of this paper. We wish to especially thank David Brydges and Emmanuel Jakob with whom
the second and the third named authors got used to share their ideas during the work on this project.
\providecommand{\bysame}{\leavevmode\hbox to3em{\hrulefill}\thinspace}

\end{document}